\setlist[enumerate,1]{label=(\roman*)}
\renewcommand\paragraph{\vskip2mm \textbf } 
\theoremstyle{plain}
\newtheorem{theorem}{Theorem}
\newtheorem{lemma}[theorem]{Lemma}
\newtheorem{proposition}[theorem]{Proposition}
\newtheorem{corollary}[theorem]{Corollary}
\newtheorem{question-conjecture}[theorem]{Question-Conjecture}
\newtheorem*{theorem*}{Theorem}
\newtheorem*{lemma*}{Lemma}
\newtheorem*{proposition*}{Proposition}
\newtheorem*{corollary*}{Corollary}
\newtheorem*{exercise*}{Exercise}
\newtheorem*{fact*}{Fact}
\newtheorem*{conjecture*}{Conjecture}
\theoremstyle{remark}
\newtheorem{remark}[theorem]{Remark}
\newtheorem{question}{Question}
\newtheorem*{remark*}{Remark}
\newtheorem*{question*}{Question}
\theoremstyle{definition}
\newtheorem{definition}[theorem]{Definition}
\newtheorem*{note*}{Note}
\newtheorem*{warning*}{Warning}
\newtheorem*{notation*}{Notation}
\newtheorem*{definition*}{Definition}
\newtheorem*{example*}{Example}
\newcommand{\A}{{\mathbb {A}}}
\newcommand{\C}{{\mathbb {C}}}
\newcommand{\F}{{\mathbb {F}}}
\newcommand{\G}{{\mathbb {G}}}
\renewcommand{\H}{{\mathbb {H}}}
\newcommand{\N}{{\mathbb {N}}}
\renewcommand{\P}{{\mathbb {P}}}
\newcommand{\Q}{{\mathbb {Q}}}
\newcommand{\R}{{\mathbb {R}}}
\newcommand{\BS}{{\mathbb {S}}}
\newcommand{\Z}{{\mathbb{Z}}}
\newcommand{\cD}{{\mathcal {D}}}
\newcommand{\cK}{{\mathcal {K}}}
\newcommand{\cL}{{\mathcal {L}}}
\newcommand{\cP}{{\mathcal {P}}}
\newcommand{\cS}{{\mathcal {S}}}
\DeclareMathOperator{\Aut}{Aut}
\DeclareMathOperator{\Bl}{Bl}
\DeclareMathOperator{\Cl}{Cl}
\DeclareMathOperator{\Gal}{Gal}
\DeclareMathOperator{\GL}{GL}
\DeclareMathOperator{\Hom}{Hom}
\DeclareMathOperator{\Nm}{Nm}
\DeclareMathOperator{\ord}{ord}
\DeclareMathOperator{\PGL}{PGL}
\DeclareMathOperator{\PSL}{PSL}
\DeclareMathOperator{\SL}{SL}
\newcommand\legendre[2]{\genfrac(){}{}{#1}{#2}}
\newcommand{\ad}{\mathrm{ad}}
\newcommand{\fin}{\mathrm{fin}}
\newcommand{\ov}{\overline}
\renewcommand{\phi}{\varphi}
\newcommand{\coloneqq}{:=}
\title[Galois Realisations of $\mathrm{PSL}_2(\mathbb{F}_{p^2})$ via non-unirat.\ Hilbert Irr.]{Galois Realisations of $\mathrm{PSL}_2(\mathbb{F}_{p^2})$ via non-unirational Hilbert Irreducibility}
\author{Julian Demeio}
\address{Julian Demeio \\ 
Institut f\"ur Algebra, Zahlentheorie und Diskrete Mathematik \\
Leibniz Universit\"at Hannover \\
Welfengarten 1  \\
Hannover \\
30167 \\
Germany
}
\author{Damián Gvirtz-Chen}
\address{Damián Gvirtz-Chen \\ 
Department of Mathematics \\
University of Glasgow \\ 
University Place \\
Glasgow \\
G12~8QQ \\
United Kingdom}
\begin{document}
\maketitle
\begin{abstract}
We establish non-unirational versions of Hilbert Irreducibility for all Hilbert modular surfaces which are of K3 type. As an application we prove new instances of the regular Inverse Galois Problem for the simple groups $\PSL_2(\F_{p^2})$ subject to congruence conditions on $p$.
\end{abstract}

\section{Introduction}

The following popular question, going back to at least Noether, has remained open for over a century:
\begin{question}[Inverse Galois Problem]
    Is every finite group $G$ the Galois group of a finite extension of $\Q$?
\end{question}

We also have the variant:
\begin{question}[Regular Inverse Galois Problem]
    Is every finite group $G$ the Galois group of a finite geometrically integral cover $C \to \P^1_{\Q}$?
\end{question}

By the Hilbert Irreducibility Theorem (a positive answer to) the regular Inverse Galois Problem implies (a positive answer to) the classical one. On the other hand, the regular version has various interesting implications: for instance, it gives not just one $G$-extension of $\Q$ but infinitely many linearly disjoint $G$-extensions over any number field by specialisation.

A fundamental case for the questions above is when $G$ is simple. Finite simple groups have been classified in the following families: groups of Lie type, alternating groups of degree at least $5$, cyclic groups and the $27$ sporadic groups (including the Tits group). The regular Inverse Galois Problem has long been solved for alternating groups, cyclic groups, and all but one of the sporadic groups \cite[Theorem II.10.3]{IGT}. So the infinite families that remain are those of Lie type. Arguably the simplest such family, and a testing ground for any new techniques, is the series $A_1(q)=\PSL_2(\F_{q})$ for a prime power $q=p^k$. In this case, the regular Inverse Galois Problem has been solved for $k=1$ when there exists $\ell\in\{2,3,5,7\}$ with $\legendre{\ell}{p}=-1$ \cite{Shih, Malle} (although non-regularly for all $p$ \cite{Zywina}) and for $k=2$ when $\legendre{5}{p}=-1$ \cite{Feit, Mestre, Mestre2}, when $\legendre{2}{p}=-1$ or  $\legendre{3}{p}=-1$ \cite{Shiina}, and finally when $\legendre{-3}{p}=-1$ or $\legendre{-7}{p}=-1$ \cite{DW} (although non-regularly for more $p$ \cite{DV}).

In this paper we introduce new methods for the Inverse Galois Problem to prove the following. Let
\[
\cL:=\{\ell\ \text { prime}: \ell \leq 41, \ell \neq 31\}.
\]
\begin{theorem}\label{Thm:IGP}
    For any prime $p$ for which there exists an $\ell \in \cL$ with the Kronecker symbol $\legendre{\ell}{p}$ equal to $-1$, the group $\PSL_2(\F_{p^2})$ is a \emph{regular} Galois group over $\Q$.\footnote{Concretely, this implies that $\PSL_2(\F_{p^2})$ is realisable as a regular Galois group over $\Q$ for all $p<3,267,289$ (compared to the previous implied bound $p<1,009$).}
\end{theorem}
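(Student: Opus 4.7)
My plan is to realise $\PSL_2(\F_{p^2})$ as the geometric Galois group of a natural étale cover of the Hilbert modular surface $Y = Y_\ell$ attached to $F = \Q(\sqrt{\ell})$, and then to invoke the paper's non-unirational Hilbert Irreducibility Theorem to descend to a regular cover of $\P^1_\Q$. Since $\legendre{\ell}{p}=-1$, the prime $p$ is inert in $F$ and $\cO_F/p \simeq \F_{p^2}$, which makes $\PSL_2(\F_{p^2})$ the natural ``projective mod-$p$'' Galois group attached to an abelian surface with real multiplication (RM) by $\cO_F$.

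\textbf{Construction of the cover.} On (a suitable level cover of) $Y$ consider the universal RM abelian surface $\pi:\cA \to Y$. The mod-$p$ torsion $\cA[p]$ is a rank-$2$ $\F_{p^2}$-local system; the Weil pairing constrains the determinant of the associated monodromy to be a power of the cyclotomic character, and projectivising yields a cover $\widetilde{Y} \to Y$ whose geometric Galois group lands in $\PSL_2(\F_{p^2})$. To identify the geometric monodromy with the full group one combines Dickson's classification of subgroups of $\PSL_2(\F_{p^2})$ with big-image results for abelian surfaces with RM (Ribet--Momose--Dieulefait and refinements): inertness of $p$ eliminates the Borel and split-Cartan reductions, while the exceptional subgroups $A_4,S_4,A_5$ are ruled out for all but finitely many $p$ on order grounds.

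\textbf{Applying non-unirational HIT.} For $\ell \in \cL$ the surface $Y$ is of K3 type, which is precisely the hypothesis of the paper's main theorem. Applied to the cover $\widetilde{Y} \to Y$, it produces a morphism $\phi: \P^1_\Q \to Y$ (equivalently, a $\Q(t)$-point of $Y$ lying outside the relevant thin subset) such that the pullback $\phi^*\widetilde{Y} \to \P^1_\Q$ is a geometrically integral $\PSL_2(\F_{p^2})$-Galois cover defined over $\Q$. This furnishes the required regular Galois realisation.

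\textbf{Main obstacle.} The core difficulty is producing a $\Q$-rational curve in $Y$ whose pullback of $\widetilde Y$ remains geometrically connected: $Y$ is non-unirational (indeed of Kodaira dimension $\geq 0$), so classical HIT is unavailable and $\Q$-rational curves in $Y$ a priori form at most a countable union of families. Overcoming this is precisely the content of the paper's non-unirational HIT, and it is the crucial new ingredient. A secondary difficulty is the full-monodromy verification in the previous step, which requires big-image arithmetic inputs for RM abelian surfaces beyond the moduli-theoretic setup; the restriction to $\ell \in \cL$ ultimately reflects the K3 constraint, and the stated quantitative bound on $p$ is the quadratic-reciprocity consequence of running the argument for every such $\ell$.
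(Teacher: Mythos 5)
Your overall architecture matches the paper's: a $\PSL_2(\F_{p^2})$-cover of a Hilbert modular surface coming from the level/moduli structure, combined with the non-unirational Hilbert irreducibility theorem for the K3-type surfaces (in its \emph{regular}, $\Q(t)$-point form --- plain non-thinness of $X(\Q)$ would only give non-regular realisations), plus classical HIT for the $\Q$-rational surfaces. However, two of your steps do not go through as stated. First, the surfaces you propose are wrong for several $\ell\in\cL$: the Hilbert modular surface attached to $\Q(\sqrt{\ell})$ has discriminant $76$ for $\ell=19$ and $92$ for $\ell=23$, and neither is of K3 type (nor rational), so the hypothesis of the non-unirational HIT fails there; for $\ell\in\{2,3,5,13,17\}$ the surfaces are $\Q$-rational and must be handled by classical HIT instead. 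The paper works with the list of usable discriminants $\cD=\{5,8,12,13,17,21,24,28,29,33,37,40,41,44,56,57,69,105\}$, which includes composite discriminants such as $57=3\cdot 19$, $69=3\cdot 23$ and $105=3\cdot 5\cdot 7$, and recovers the condition ``$\legendre{\ell}{p}=-1$ for some $\ell\in\cL$'' from ``$\legendre{D}{p}=-1$ for some $D\in\cD$'' by multiplicativity of the Kronecker symbol, since $\cD$ and $\cL$ generate the same subgroup of $\Q^\times$ modulo squares (e.g.\ if $\legendre{19}{p}=-1$ one uses $D=57$ when $\legendre{3}{p}=1$ and $D=12$ when $\legendre{3}{p}=-1$). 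This bookkeeping is not a cosmetic reciprocity remark; without it your argument simply has no surface to run on for $\ell=19,23$.

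Second, your monodromy verification uses the wrong tool. Big-image theorems for individual RM abelian surfaces (Ribet--Momose--Dieulefait) concern the Galois image of a single member over a number field, carry their own hypotheses and ``all but finitely many $p$'' caveats, and do not directly compute the geometric monodromy of the universal family; your appeal to Dickson and to excluding Borel/Cartan images is not how the surjectivity is obtained. The paper gets surjectivity of $\PSL(\mathfrak o\oplus\mathfrak a)\to\PSL_2(\mathfrak o/p)$ for \emph{every} $p$ at once from strong approximation for $\SL_2$ over the real quadratic field. Relatedly, the genuinely delicate point of the cover construction --- that the projectivised cover is a geometrically integral $\PSL_2(\F_{p^2})$-Galois cover already over $\Q$, with no constant-field extension and no Shih-type trick (in contrast to $X(p)\to X(1)$ for modular curves) --- is only gestured at in your write-up. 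It holds because the determinant of the projectivised arithmetic representation is the mod-$p$ cyclotomic character viewed in $\F_{p^2}^\times/(\F_{p^2}^\times)^2$, and every element of $\F_p^\times$ is a square in $\F_{p^2}^\times$; this, together with $\mathfrak o/p\cong\F_{p^2}$, is where the inertness of $p$ is actually used.
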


In particular, this realises the group $\PSL_2(\F_{p^2})$ over any Hilbertian field $k$ for the primes $p$ above. For the primes $\ell \leq 17, \ell \neq 7,11$ we apply the classical Hilbert Irreducibility Theorem to the Hilbert modular surfaces for which Elkies and Kumar \cite{EK} have proved $\Q$-rationality. For the primes $\ell=7,11,19,23,29,37$ and $41$, an additional difficulty arises: the relevant Hilbert modular surfaces are not (uni)rational. We thus prove the theorem with the help of new versions of the Hilbert Irreducibility Theorem over non-unirational surfaces. More precisely, we show that the Hilbert Property holds for several low-discriminant Hilbert modular surfaces, i.e.\ their sets of rational points are not thin. These surfaces are K3 surfaces, and here our approach uses the multiple fibration method initially introduced by Corvaja-Zannier \cite{CZ}, and further studied by the first-named author \cite{Demeio} and by the second-named author in collaboration with Mezzedimi \cite{GCM}. This seems to be the first time that a non-unirational Hilbert Property is applied to the Inverse Galois Problem.

\begin{theorem}\label{Thm:K3}
 Let $X/\Q$ be the Hilbert modular surface of discriminant $D$ and genus $g$ at base level. If $X$ is birational to a K3 surface, then $X(\Q)$ is not thin.
\end{theorem}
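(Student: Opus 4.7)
The plan is to apply the multiple elliptic fibration approach to the Hilbert Property initiated by Corvaja--Zannier \cite{CZ} and developed for K3 surfaces in \cite{Demeio, GCM}. The Hilbert modular surfaces birational to K3 form a finite explicit list of pairs $(D,g)$, and for each such $X$ an explicit projective $\Q$-model of a K3 surface $Y$ birational to $X$ is available from the Elkies--Kumar tables \cite{EK}, together with enough of the Néron--Severi lattice $\NS(Y_{\ov\Q})$ and its Galois action to parametrise elliptic fibrations.

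For each such $Y$ I will exhibit two distinct $\Q$-rational elliptic fibrations $\pi_1, \pi_2 : Y \to \P^1_\Q$, corresponding to two non-proportional primitive isotropic classes in $\NS(Y_{\ov\Q})^{\Gal(\ov\Q/\Q)}$ that are not exchanged by any automorphism of $Y$. For each $\pi_i$ I will identify a $\Q$-rational zero section and a further $\Q$-rational section of infinite order, so that the generic Mordell--Weil rank of $(Y,\pi_i)$ over $\Q(t)$ is positive. Concretely this is carried out by reading off reducible fibres from the Elkies--Kumar equations, computing the resulting trivial sublattice, and using height-pairing arguments to certify that chosen sections are non-torsion.

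With two such fibrations in hand, I invoke the K3 Hilbert Property criterion of \cite{GCM}: a K3 surface over a number field with two genuinely distinct elliptic fibrations, each of positive generic Mordell--Weil rank, has non-thin set of rational points. Since thinness is preserved under birational equivalence over $\Q$, the conclusion transfers from $Y(\Q)$ to $X(\Q)$.

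The main obstacle will be the construction of the fibrations in the second step: for the larger-discriminant surfaces needed for $\ell = 29, 37, 41$ in Theorem \ref{Thm:IGP}, the Elkies--Kumar models are intricate, and each case requires a tailored computation to extract the two fibrations, to certify that chosen sections are non-torsion, and to carry out the lattice-theoretic check that the two isotropic classes are not $\Aut(Y_{\ov\Q})$-equivalent (which would otherwise make the ``two'' fibrations redundant for the method).
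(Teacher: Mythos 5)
Your overall strategy (two genus $1$ fibrations of positive Mordell--Weil rank plus the Corvaja--Zannier/Demeio multiple fibration method) is the same as the paper's, but the criterion you invoke in the final step does not exist in the clean form you state, and this is a genuine gap. Theorem \ref{Thm:Hilbertellptic} (and its regular variant used in the paper) requires, besides Zariski-density of rational points, that $X_{\overline{k}}\setminus Z$ be simply connected, where $Z$ is the union of the divisors vertical for \emph{all} the chosen fibrations. Two distinct fibrations of positive rank do not by themselves exclude covers that are vertically ramified with respect to every fibration, which is exactly what the simple-connectedness of the complement of $Z$ rules out. Verifying this hypothesis is the heart of the matter: it forces one to determine the full configuration of $(-2)$-curves that could lie in $Z$ and to apply something like Lemma \ref{Lem8} (exhibiting a simply connected fibre avoiding $Z$ and a multiplicity-one component outside $Z$ in every fibre). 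In the paper this occupies all of \S\ref{Sec:cycles} and Appendix \ref{Sec:graphs}; your plan omits it entirely.

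There is a second, more practical problem with your starting point. The Elkies--Kumar tables give models of the \emph{symmetric} Hilbert modular surfaces (quotients by the swap involution), not of the surfaces $X_g$ at base level appearing in Theorem \ref{Thm:K3}, and not for the non-principal genera $D\in\{21,24,28,33,40\}$ at all; the paper uses \cite{EK} only for the $\Q$-\emph{rational} cases $D=5,8,12,13,17$ in the proof of Theorem \ref{Thm:IGP}. For the K3-type surfaces the paper instead produces the fibrations, sections and the over-exceptional divisor intrinsically, from the modular structure: toroidal resolutions of the cusps, resolutions of the elliptic points, and Hirzebruch--Zagier divisors, together with their mutual intersection numbers (Propositions \ref{Prop:HZintCusp}, \ref{Prop:HZEllmult}, \ref{Prop:HZintEll} and \ref{Thm:HZintHZ}), all defined over $\Q$ by Shimura theory. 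A minor further point: the relevant non-redundancy condition on the two fibrations is that they be distinct up to automorphisms of the base $\P^1$ (i.e.\ define distinct pencils), not that the isotropic classes be inequivalent under $\Aut(Y_{\overline{\Q}})$.
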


In fact, we prove an even stronger statement, namely that $X(\Q(t))$ is non-thin in $X_{\bar \Q(t)}(\bar \Q(t))$ (Theorem \ref{Thm:K3regular}), thus producing an abundance of {\em rational curves}, and not just rational points, on the Hilbert modular surfaces under investigation.
The application to the Inverse Galois Problem is found through the following
\begin{theorem}\label{Thm:cover}
    Let $K$ be the real quadratic field of discriminant $D$ with ring of integers $\mathfrak o$, and let $X/\Q$ be the Hilbert modular surface of discriminant $D$ and genus $g$ at base level. For every rational prime $p$ that is inert in $K$, there exists a Galois geometrically integral cover $Y_p \to X$ with group $\PSL_2(\mathfrak o/p)$.
\end{theorem}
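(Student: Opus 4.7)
The plan is to exhibit $Y_p$ as a mod-$p$ principal congruence level cover of $X$, constructed via the projective mod-$p$ Galois representation on the $p$-torsion of the universal abelian surface with real multiplication by $\mathfrak{o}$, and to descend it to $\Q$ along the arithmetic étale fundamental group. Geometrically, $X_{\ov\Q}$ is birational to $(\H\times\H)/\PSL_2(\mathfrak{o})$, and the principal congruence subgroup $\Gamma(p)\subseteq \Gamma = \PSL_2(\mathfrak{o})$ has quotient $\Gamma/\Gamma(p)=\PSL_2(\mathfrak{o}/p)=\PSL_2(\F_{p^2})$ (using inertness of $p$ and the classical surjectivity of $\SL_2(\mathfrak{o})\to\SL_2(\F_{p^2})$). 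The task is to realise the corresponding topological cover as a $\Q$-cover of $X$.

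First I would pass to a non-empty open $\Q$-subscheme $U\subseteq X$ supporting a universal family $\cA\to U$ of principally polarised abelian surfaces with real multiplication by $\mathfrak{o}$; since $X$ is only a coarse moduli space, this may require adding an auxiliary level-$N$ structure with $\gcd(N,p)=1$ and $N\geq 3$, which is harmlessly divided out at the end. The $p$-torsion $\cA[p]\to U$ is then a finite étale cover whose geometric fibres are free $\mathfrak{o}/p\cong \F_{p^2}$-modules of rank $2$, giving a mod-$p$ Galois representation
\[
\rho_p\colon \piet(U)\to \GL_2(\F_{p^2})
\]
and its projectivisation $\bar\rho_p\colon \piet(U)\to \PGL_2(\F_{p^2})$, both naturally defined over $\Q$.

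Next I would prove that $\bar\rho_p$ actually factors through $\PSL_2(\F_{p^2})$ and that its restriction to the geometric fundamental group surjects onto it. The factorisation is a Weil-pairing computation: $\det\rho_p$ equals the mod-$p$ cyclotomic character with image in $\F_p^*\subseteq\F_{p^2}^*$, and for $a\in\F_p^*$ one has $a^{(p^2-1)/2}=(a^{p-1})^{(p+1)/2}=1$, so $\F_p^*\subseteq (\F_{p^2}^*)^2$, killing the $\PGL/\PSL$ obstruction. Geometric surjectivity follows from the description $U(\C)\subseteq (\H\times\H)/\PSL_2(\mathfrak{o})$: the topological monodromy of $\cA$ is commensurable with $\SL_2(\mathfrak{o})$, and strong approximation makes its reduction mod $p$ surject onto $\SL_2(\F_{p^2})$, hence onto $\PSL_2(\F_{p^2})$ after passing to the projective quotient. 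The kernel of $\bar\rho_p$ then defines a geometrically connected étale Galois cover $Y_p^\circ\to U$ over $\Q$ with group $\PSL_2(\F_{p^2})$, and taking the normalisation of $X$ in the function field of $Y_p^\circ$ extends it to $Y_p\to X$.

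The main obstacle is the arithmetic control in the previous step. Beyond the elementary Weil-pairing check, one also has to verify that no outer automorphism of $\PSL_2(\F_{p^2})$ coming from the $\Gal(K/\Q)$-action on the $\mathfrak{o}$-module structure of $\cA[p]$ (which a priori could introduce a Frobenius twist in $\Gal(\F_{p^2}/\F_p)$) obstructs the $\Q$-descent. Here the precise choice of $\Q$-model of the Hilbert modular surface, via the canonical Shimura--Deligne construction, is what ensures that the descended cover has Galois group exactly $\PSL_2(\F_{p^2})$ rather than a proper extension of it.
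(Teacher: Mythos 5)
Your proposal is correct in substance, and it follows the route that the paper explicitly mentions in the introduction as an \emph{alternative} but does not carry out: inferring the cover from the mod-$p$ representation on the $p$-torsion of the abelian surfaces with real multiplication parametrised by $X$. The paper's own proof (\S\ref{Sec:cover}) instead works adelically: it realises the level-$p$ cover $X_p\to X_g$ as a morphism of canonical models of Shimura varieties, and Lemma~\ref{Lem:diagram} identifies the determinant of the resulting representation $\rho:\Gamma_{\Q(X_g)}\to G(\mathfrak o/p)/\{\pm1\}$ with the cyclotomic character via the Deligne--Serre reciprocity law on $\pi_0$, while geometric surjectivity onto $\SL_2(\F_{p^2})/\{\pm1\}$ comes from strong approximation exactly as in your argument. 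The decisive computation is identical in both treatments: $\det\rho'$ lands in the image of $\F_p^\times$ in $\F_{p^2}^\times/\F_{p^2}^{\times2}$, which is trivial, so the projectivised representation factors through $\PSL_2(\F_{p^2})$ and is surjective already on the geometric fundamental group, whence geometric integrality. What the Shimura-theoretic packaging buys is that the determinant/cyclotomic identification and the $\Q$-rationality of the cover come for free from the canonical model (and it meshes with the rest of the paper's analysis of special cycles), bypassing the coarse-moduli issues you have to negotiate with auxiliary level structure; what your moduli-theoretic packaging buys is a more concrete Weil-pairing computation of the determinant. Two small caveats on your write-up: for non-principal genus $g$ the abelian surfaces are not principally polarised (the polarisation module is the ideal class $\mathfrak a$ attached to $g$), which changes the bookkeeping of the pairing but not the conclusion that $\det\rho_p$ is cyclotomic with values in $\F_p^\times$; and the Frobenius-twist worry in your last paragraph is a non-issue once one observes that the $\mathfrak o$-action on $\cA[p]$ is part of the moduli data defined over the base, so the arithmetic monodromy is automatically $\mathfrak o/p$-linear, i.e.\ contained in $\GL_2(\F_{p^2})$ rather than its extension by $\Gal(\F_{p^2}/\F_p)$ --- combined with the determinant computation and geometric surjectivity this pins the Galois group down to exactly $\PSL_2(\F_{p^2})$.
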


Compared to the well-known modular curve setting, the existence of the Galois cover which is geometrically integral may be surprising at first, see Remark~\ref{Rem:shih}.

Theorem \ref{Thm:IGP} is proven by combining Theorems \ref{Thm:K3} and \ref{Thm:cover}. 
The proofs of Theorems \ref{Thm:K3} and \ref{Thm:cover} rely on the theory of canonical models of compactified Shimura varieties as applied to Hilbert modular surfaces. More precisely, Theorem \ref{Thm:K3} relies on an analysis of the intersection graph of these modular surfaces, while Theorem \ref{Thm:cover} follows naturally by looking at the Shimura data involved. (Alternatively, Theorem \ref{Thm:cover} can be inferred by looking at the $p$-torsion representation of the abelian surfaces with real multiplication parametrized by Hilbert Modular Surfaces.) 

The particular Shimura varieties under our consideration {\em already have} many (smooth) rational curves that arise as special algebraic cycles coming from the moduli structure. Namely, these come from the boundary of the compactification and from embeddings of Shimura data. However, specialising the cover $Y_p \to X$ to {\em these} rational curves does not produce surjective representations $\Gal(\overline{\Q(t)}/\Q(t)) \to \PSL_2(\F_{p^2})$ precisely because of the special origin of these curves, making this ``naive'' specialisation \emph{not} sufficient to construct Galois representations of $\PSL_2(\F_{p^2})$. Nonetheless, these smooth rational curves give rise to many genus $1$ fibrations defined over the base field $\Q$ on the surface $X$. This geometric feature of $X$, which itself does not appear to have a modular interpretation, is what ultimately allows us to propagate these rational curves and obtain new ones for which the specialised representation instead {\em does} indeed surject. More formally, this ``propagation'' is expressed by the multiple fibration method used to prove Theorem \ref{Thm:K3}.

The Shimura theory is particularly explicit in our case but it would be interesting to see if a similar approach could apply to other settings.

It would also be interesting to study the Hilbert Property for honestly elliptic Hilbert modular surfaces, where the multiple fibration method does not apply. If instead of proving non-thinness of rational points the goal is to only specialise a specific Galois cover like in Theorem~\ref{Thm:cover}, then one may use less than the full Hilbert Property, although knowledge of the intersection graph of the modular surface would still be necessary. We will pursue this approach in a separate forthcoming work.

The article is organised as follows: In \S\ref{Sec:hilbert}, we review and extend the multiple fibration method to nonconstant $\Q(t)$-points. We develop the necessary Shimura theory of Hilbert modular surfaces in \S\ref{Sec:shimura} and construct the cover from Theorem~\ref{Thm:cover} in \S\ref{Sec:cover}. We then study the algebraic cycles on our surfaces in \S\ref{Sec:cycles}. Finally, we prove Theorems~\ref{Thm:IGP} and \ref{Thm:K3} in \S\ref{Sec:main}.

\paragraph{Acknowledgements}
The authors thank Gerard Van der Geer and John Voight for helpful comments. They gratefully acknowledge the hospitality of ICMS Edinburgh, the Center for Mathematical Sciences at Technion, and the University of Bath, where part of this research was carried out, and thank Daniel Loughran for the invitation to the latter. The second author was supported by UKRI award UKRI094.

\section{Notation and terminology}

An \emph{algebraic variety} is a reduced, separated scheme of finite type over a field $k$. A \emph{cover of algebraic varieties} is a finite dominant morphism $f:Y \to X$ where $Y$ is integral and both $X$ and $Y$ are normal. 

\vskip1mm

 A morphism $f:Y \to X$ between $k$-varieties is {\em unramified} at $y \in Y$ if the differential $\mathrm{d}f:T_{y}Y \to T_{f(y)}X$ is injective. Otherwise, it is {\em ramified}. The set of all ramified points is closed; we refer to it as the {\rm ramification locus} of $f$, and to its image under $f$ as the {\em branch locus}. The ramification locus of $f:Y \to X$ is cut out by the $0$-th Fitting ideal of $\Omega_{Y/X}$, and, as such, it is invariant under pullback of maps (i.e.\ base-change) \cite[Tag07Z6, Tag 0C3H]{stacksproject}, and so is its image. By Zariski's purity theorem \cite{Zariski}, the branch locus of a cover is a divisor when $X$ is smooth over $k$. 

\vskip1mm

For a ring $R$, we let $\PGL_2(R)$ and $\PSL_2(R)$ be the quotients of $\GL_2(R)$ and $\SL_2(R)$ by scalar matrices. 
For an $R$-module $M$, we define its {\em primitive} elements $M_{\text{prim}} \subset M$ to be its elements that are not divisible by non-invertible elements of $R$. For a projective $R$-module $M$, we denote by $\GL(M)$ the group of $R$-automorphisms of $M$. When $R$ is an integral domain with field of fractions $K$ and $M$ has rank $n$, fixing a $K$-basis of $M \otimes_R K$ yields an embedding $\GL(M) \hookrightarrow \GL_n(K)$. For any algebraic subgroup $G$ of $\GL_n$, we let $G(M)=\GL(M) \cap G(K)$.

\vskip1mm

If $G$ is any reductive group over $\Q$, we write $G^{\ad}=G/Z(G)$ for its \emph{adjoint} group, $G(\R)_+$ for the preimage of the identity component of $G^\ad(\R)$ in $G(\R)$ and $G(\Q)_+=G(\R)_+\cap G(\Q)$. 

\vskip1mm

Let $\cP$ denote the set of (finite) primes of $\mathbb Q$. Let $\A_\Q^\fin:=\prod'_{p \in \cP}\Q_p$ denote the ring of \emph{finite adeles} and $\A_\Z^\fin:=\prod_{p \in \cP}\Z_p$ the ring of \emph{finite integral adeles}.

\section{Genus $1$ fibrations and the Hilbert Property}\label{Sec:hilbert}
\begin{definition}
    Let $X$ be a smooth proper geometrically integral algebraic surface over a field $k$.
    \begin{enumerate}
    \item A \emph{genus $1$ fibration} is a morphism from $X$ to an irreducible curve $C$, whose generic fibre is a smooth, proper, geometrically integral curve of genus $1$.
    \item A curve $E\subset X$ is a \emph{genus $1$ configuration} if it appears in the Kodaira classification of singular fibres in genus $1$ fibrations \cite[Figure~4.4]{Silverman}. Equivalently, it is either an irreducible curve of arithmetic genus $1$ or all of its irreducible components are smooth rational curves and their intersection graph is an extended simply-laced Dynkin diagram (see Figure \ref{dynkin}).
    \end{enumerate}
\end{definition}

\begin{figure}[H]
\centering
    \begin{tikzpicture}[scale = 0.9]
    \filldraw 
    (-3,0) circle (2pt) node[above]{}  -- (-2.25,0) circle (2pt) node[above]{}  -- (-1.5,0) circle (2pt) node[above]{} ;
    
    \draw[dashed] (-1.5,0) -- (-1,0);
    \draw[dashed] (-0.5,0) -- (0,0);
    
    \filldraw (0,0) circle (2pt) node[above]{}     -- (0.75,0) circle (2pt) node[above]{}   -- (1.5,0) circle (2pt) node[above]{}  -- (-0.75,-0.75) circle (2pt) node[below]{} ;
    
    \draw (-0.75,-0.75) -- (-3,0);
    
    \draw (-0.75,0.75) node {$\tilde A_n$};

    \filldraw (3,1) circle (2pt)  node[left]{}  -- (3.75,0) circle (2pt)  node[left]{}  -- (4.5,0) circle (2pt)  node[below]{} ;
    \draw[dashed] (4.5,0) -- (5,0);
    \draw[dashed] (5.5,0) -- (6,0);
    
    \filldraw (6,0) circle (2pt)  node[below]{}  -- (6.75,0) circle (2pt)  node[right]{}  -- (7.5,1) circle (2pt)  node[left]{} ;
    \filldraw (7.5,-1) circle (2pt)  node[left]{}  -- (6.75,0);
    \filldraw (3,-1) circle (2pt)  node[left]{}  -- (3.75,0);
    
    \draw (5.25,0.75) node {$\tilde D_n$};

    \filldraw (9,0) circle (2pt) node[below]{}   -- (9.75,0) circle (2pt) node[below]{}   -- (10.5,0) circle (2pt) node[above]{}   -- (11.25,0) circle (2pt) node[below]{}   -- (12,0) circle (2pt) node[below]{}   ;
    \filldraw (10.5,0) -- (10.5,-0.75) circle (2pt) node[right]{}  ;
    \filldraw (10.5,-0.75) -- (10.5,-1.5) circle (2pt) node[right]{}  ;
    
    \draw (10.5,0.75) node {$\tilde E_6$};
  \end{tikzpicture}
  \end{figure}

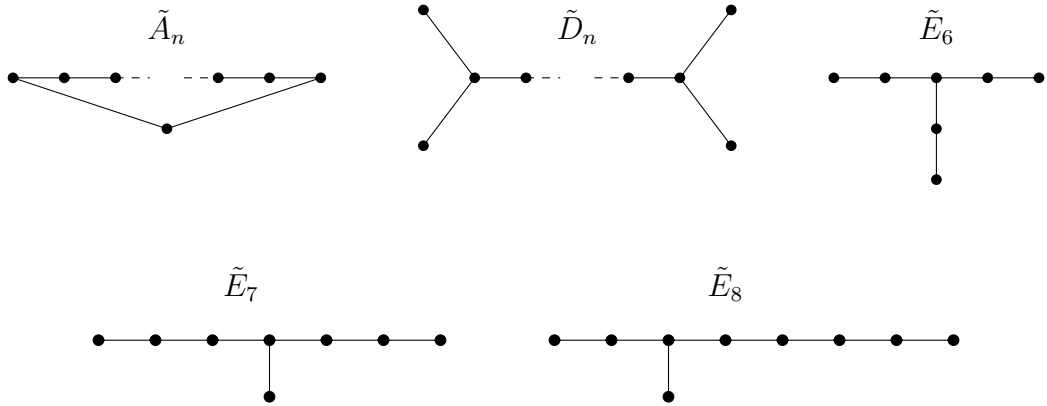
\begin{figure}[H]
\centering
\begin{tikzpicture}

    \filldraw (3,0) circle (2pt) node[below]{}   -- (3.75,0) circle (2pt) node[below]{}   -- (4.5,0) circle (2pt) node[below]{}   -- (5.25,0) circle (2pt) node[above]{}   -- (6,0) circle (2pt) node[below]{}   -- (6.75,0) circle (2pt) node[below]{}  -- (7.5,0) circle (2pt) node[below]{} ;
    \filldraw (5.25,0) -- (5.25,-0.75) circle (2pt) node[right]{}  ;

    \draw (4.875,0.75) node {$\tilde E_7$};

    \filldraw (9,0) circle (2pt) node[below]{}  -- (9.75,0) circle (2pt) node[below]{}  -- (10.5,0) circle (2pt) node[above]{} -- (11.25,0) circle (2pt) node[below]{} -- (12,0) circle (2pt) node[below]{}  -- (12.75,0) circle (2pt) node[below]{}  -- (13.5,0) circle (2pt) node[below]{}  -- (14.25,0) circle (2pt) node[below]{} ;
    \filldraw (10.5,0) -- (10.5,-0.75) circle (2pt) node[right]{} ;
    
    \draw (11.25,0.75) node {$\tilde E_8$};
\end{tikzpicture}
\caption{Extended simply-laced Dynkin diagrams. $\tilde A_n$ and $\tilde D_n$ have $n+1$ nodes.}
\label{dynkin}
\end{figure}

When $X$ is a K3 surface over $k$, then the base curve $C$ is always either a smooth projective conic without rational points or isomorphic to $\P^1_k$ \cite[Proposition~4.3.5]{Enriques}. Moreover any reduced fibre of a genus $1$ fibration is a genus $1$ configuration in the above sense, and vice versa, any genus $1$ configuration is a reduced fibre of a genus $1$ fibration \cite[Theorems 2.3.10 and 11.1.9]{Huybrechts}.  If the genus $1$ configuration is defined over $k$, then so is the induced fibration and $C\simeq\P^1_k$. 

\subsection{A criterion for the non-unirational Hilbert Property}

After Serre \cite[Definition~3.1.2]{Serre}, an algebraic variety $X/k$ is said to have the \emph{Hilbert Property} if $X(k)$ is not a thin set, i.e.\ if $X(k)$ is not a union of sets of the form:
\begin{enumerate}%
    \item $\pi(Y(k))$, for some cover $\pi:Y \to X$ of degree $\geq 2$;
    \item $W(k)$, for a proper Zariski-closed subvariety $W \subseteq X$.
\end{enumerate} 

Like the Hilbert Irreducibility Theorem, the Hilbert Property enables specialisation of the Galois group: If $\phi:Y\to X$ is a Galois cover of algebraic varieties with Galois group $G$, then for all $x\in X(k)$ outside a suitably chosen thin set, the fibre $\phi^{-1}(x)$ is irreducible and $\Gal(\phi^{-1}(x)/k)\simeq G$ \cite[Prop.~3.3.1]{Serre}. The Hilbert Property for $X$ guarantees that the set of such $x$ is non-empty (in fact Zariski-dense in $X$).

\vskip1mm

We have the following criterion, the multiple fibration method:

\begin{theorem}[{\cite[Thm.~1.1]{Demeio}}\footnote{Theorem 1.1 of \cite{Demeio} is stated only for number fields, but the same proof goes through for any finitely generated field $k$ of characteristic $0$, see e.g.\ \cite[Thm.~6.1]{GCM}.}]\label{Thm:Hilbertellptic}
    Let $X$ be a projective smooth geometrically connected surface over a finitely generated field $k$ of characteristic $0$, that is endowed with genus $1$ fibrations $\pi_1,\ldots,\pi_n:X \to \P^1, \ n \geq 2$, which are pairwise distinct up to automorphisms of the base. Let $Z \subseteq X_{\overline{k}}$ be the union of all irreducible divisors of $X_{\overline{k}}$ that, for each $i$, are contained in a fiber of $\pi_i$. If
    \begin{enumerate}
        \item $X(k)$ is Zariski-dense in $X$, and 
        \item $X_{\overline{k}} \setminus Z$ is simply connected,
    \end{enumerate}
    then $X$ has the Hilbert Property.
\end{theorem}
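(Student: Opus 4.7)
The plan is to argue by contradiction, combining a Silverman-style specialisation argument applied to each fibration $\pi_i$ with a purity argument using the simple-connectedness hypothesis. Suppose $X(k)$ were thin; then $X(k)\subseteq\bigcup_{j=1}^{m}f_j(Y_j(k))\cup W(k)$ for finitely many covers $f_j:Y_j\to X$ of degree $\geq 2$ and a proper closed subset $W\subsetneq X$. After replacing each $f_j$ by its Galois closure, passing to a finite extension of $k$ so that $Y_j$ becomes geometrically connected, and absorbing the resulting bad locus into $W$, I may assume each $f_j$ is a geometrically connected Galois cover of degree at least $2$.

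The core of the proof is the following dichotomy: for each $j$, either the branch divisor $B_j\subset X_{\overline{k}}$ of $f_j$ is contained in $Z$, or $f_j(Y_j(k))$ is not Zariski-dense in $X$. Granting this, the Zariski-density of $X(k)$ from (1) guarantees that at least one $f_j$ has Zariski-dense image, to which the first alternative applies. Such an $f_j$ is \'etale over $X_{\overline{k}}\setminus Z$; by (2) and the Grauert-Remmert equivalence in characteristic $0$, $\piet(X_{\overline{k}}\setminus Z)=1$, so the restricted \'etale cover is trivial, and as $Y_{j,\overline{k}}$ is connected this forces $\deg f_j=1$, a contradiction. To prove the dichotomy, I assume $B_j\not\subseteq Z$, pick an irreducible component $D\subset B_j$ and an index $i$ with $\pi_i(D)=\P^1$; then the pullback of $f_j$ to the generic fibre $E_{i,\eta}$ of $\pi_i$ is a ramified cover of a smooth genus-$1$ curve over $k(\P^1)$. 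A combination of Silverman's specialisation theorem with the classical Hilbert Irreducibility Theorem on $\P^1$, as implemented in \cite{CZ, Demeio, GCM}, then shows that for $t$ outside a thin subset of $\P^1(k)$, the set $f_j(Y_j(k))\cap\pi_i^{-1}(t)(k)$ is thin in the elliptic curve $\pi_i^{-1}(t)$. The Zariski-density of $X(k)$, transported back via classical HIT on $\P^1$, forces $f_j(Y_j(k))$ to be non-dense in $X$.

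The hardest step will be the technical heart of the dichotomy: controlling rational points of $\pi_i^{-1}(t)(k)$ that lift through a possibly reducible pullback cover $Y_j\times_X E_{i,\eta}$ requires a careful split of this pullback into \'etale components (yielding Mordell-Weil-type obstructions, i.e.\ proper finite-index subgroups of $\pi_i^{-1}(t)(k)$) and ramified components (yielding Chevalley-Weil-type obstructions, i.e.\ thin subsets). Promoting these pointwise obstructions uniformly over a thin-complement of $t\in\P^1(k)$ is precisely what a Silverman-type specialisation theorem for the generic Mordell-Weil group of $\pi_i$ provides. Once this analysis is in place, the purity step completes the argument formally, and the hypothesis that the fibrations are pairwise non-isomorphic up to base automorphism is exactly what keeps $Z$ small enough for the final topological step to bite.
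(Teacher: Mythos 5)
Your final purity step (covers branched inside $Z$ are \'etale over the simply connected $X_{\overline k}\setminus Z$, hence trivial) is exactly the right endgame and matches the paper's. But the dichotomy on which everything else rests --- ``either $B_j\subseteq Z$ or $f_j(Y_j(k))$ is not Zariski-dense'' --- is false, and the step you use to derive its second alternative is a non-sequitur. Knowing that $f_j(Y_j(k))\cap\pi_i^{-1}(t)(k)$ is finite (or thin) for each $t$ outside a thin subset of $\P^1(k)$ does \emph{not} imply that $f_j(Y_j(k))$ fails to be Zariski-dense in $X$: a union over infinitely many fibres of finite subsets can perfectly well be dense (already in $\P^1\times\P^1$, a set meeting each vertical fibre in one point can be dense). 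Concretely, a degree-$2$ cover $Y_j\to X$ ramified along a curve horizontal for every $\pi_i$ can have $Y_j(k)$ Zariski-dense, in which case $f_j(Y_j(k))$ is dense while $B_j\not\subseteq Z$. The Hilbert Property is a statement about the \emph{union} of the images not exhausting $X(k)$; individual ramified covers with dense image are entirely possible, so no argument of the ``one cover at a time'' shape can work.

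The actual proof (Corvaja--Zannier, Demeio, as adapted in the paper for Theorem~\ref{Thm:Hilbertellpticregular}) has to treat all the covers simultaneously and fibrewise. Fix a good rational point $p$ and the fibre $C_p=\pi_n^{-1}(\pi_n(p))$, arranged via Merel's theorem to be a smooth genus-$1$ curve with infinitely many rational points. The covers \emph{horizontally} ramified with respect to $\pi_n$ contribute only finitely many points of $C_p(k)$ (your Faltings/Chevalley--Weil input, which is correct); the covers \emph{vertically} ramified with respect to $\pi_n$ contribute, by weak Mordell--Weil, either nothing or a finite-index coset of $C_p(k)$, and a coset-covering lemma then forces $C_p(k)$ to be entirely contained in the union of the images of the latter covers. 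One then repeats the argument with $\pi_{n-1},\dots,\pi_1$, inducting until only covers vertically ramified with respect to \emph{all} the $\pi_i$ remain --- i.e.\ covers branched inside $Z$ --- and only at that point does your purity step deliver the contradiction. The interplay between the several fibrations, and the group structure on the fibres, is the content you have short-circuited; the hypothesis that the fibrations are pairwise distinct is used to make $Z$ a genuine divisor, not to make each individual cover's image non-dense.
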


We refer to the irreducible divisors of $X_{\overline{k}}$ appearing in the theorem as \emph{common vertical divisors} of $\pi_1,\ldots,\pi_n.$ 
Since $n \geq 2$ and $\pi_i^{-1}(t_1) \cdot \pi_i^{-1}(t_2) > 0$ for all $i \neq j$, no common vertical divisor can be an irreducible fiber of any fibration. In particular, if $X$ is minimal, all such divisors are smooth geometrically rational $(-2)$-curves and their configuration has to be the disjoint union of finitely many simply-laced non-extended Dynkin diagrams of types $A_n$, $D_n$, $E_6$, $E_7$, $E_8$ (being irreducible components of reducible fibers of a minimal genus $1$ fibration). We call the divisor $Z$ the \emph{over-exceptional divisor} for $\pi_1,\ldots,\pi_n$.

\vskip1mm

The following lemma shall prove particularly useful in verifying the second hypothesis in Theorem \ref{Thm:Hilbertellptic}:
\begin{lemma}\label{Lem8}
Let $X$ be a smooth proper surface defined over an algebraically closed field $\bar k$ of characteristic $0$, and $\pi:X \to \P^1$ be a genus $1$ fibration. Let $Z \subset X$ be a divisor that is vertical for $\pi$ and such that:
\begin{enumerate}
    \item $\pi$ admits a simply connected fiber none of whose components lies in $Z$;
    \item each fiber of $\pi$ contains a component of multiplicity $1$ that does not lie in $Z$.
\end{enumerate}
Then $X \setminus Z$ is simply connected. 
\end{lemma}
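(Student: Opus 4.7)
The plan is to work in the complex analytic topology, reducing to $\bar k=\mathbb C$ by the Lefschetz principle. Write $U=X\setminus Z$. Because $Z$ is vertical for $\pi$, there is a finite set $\Sigma\subset\P^1$ with $Z\subseteq\pi^{-1}(\Sigma)$, and by (i) the point $\pi(F_0)$ lies outside $\Sigma$. Set $V=\P^1\setminus\Sigma$, so $\pi^{-1}(V)\subseteq U$. The complement $U\setminus\pi^{-1}(V)=\pi^{-1}(\Sigma)\setminus Z$ is a closed complex analytic subset of $U$ of pure complex codimension one, hence real codimension two; transversality of loops then gives a surjection $\pi_1(\pi^{-1}(V))\twoheadrightarrow\pi_1(U)$, so it suffices to show that every generator of $\pi_1(\pi^{-1}(V))$ becomes trivial in $\pi_1(U)$.

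To identify $\pi_1(\pi^{-1}(V))$, I would apply the standard homotopy exact sequence for a proper surjective morphism of smooth complex varieties with connected general fibre (Nori--Koll\'ar) to $\pi|_{\pi^{-1}(V)}\colon\pi^{-1}(V)\to V$:
\[
\pi_1(E)\longrightarrow\pi_1(\pi^{-1}(V))\longrightarrow\pi_1(V)\longrightarrow 1,
\]
where $E$ is a general (smooth elliptic) fibre. Since $\pi(F_0)\in V$, a small analytic disk $D_0\subset V$ around $\pi(F_0)$ satisfies $\pi^{-1}(D_0)\simeq F_0$ up to homotopy, by the standard deformation retraction of a disk neighborhood onto a Kodaira fibre. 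By (i) this retract is simply connected, so any smooth fibre $E\subset\pi^{-1}(D_0\setminus\{\pi(F_0)\})$ factors its inclusion into $\pi^{-1}(V)$ through a simply connected open set. The image of $\pi_1(E)$ is therefore trivial, and $\pi_*\colon\pi_1(\pi^{-1}(V))\xrightarrow{\sim}\pi_1(V)$: a free group on small loops $\gamma_t$ around the points $t\in\Sigma$.

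It remains to kill each $\gamma_t$ in $\pi_1(U)$, and this is where hypothesis (ii) is used. For $t\in\Sigma$, pick a multiplicity-one component $C_t$ of $\pi^{-1}(t)$ with $C_t\not\subseteq Z$, and then a point $x_t\in C_t\setminus Z$ that is smooth in $\pi^{-1}(t)$; this is possible because $C_t\cong\P^1$ meets the other components of $\pi^{-1}(t)$ and the divisor $Z$ in only finitely many points. Since $C_t$ has multiplicity one and $\pi^{-1}(t)$ is smooth at $x_t$, the morphism $\pi$ is a smooth submersion at $x_t$, so the holomorphic implicit function theorem produces a local holomorphic section $s_t\colon D_t\to U$ of $\pi$ on a small disk $D_t\subset\P^1$ around $t$ with $s_t(t)=x_t$. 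A representative of $\gamma_t$ in $D_t\setminus\{t\}$ lifts via $s_t$ to a loop in $U$, which is null-homotopic in the contractible disk $s_t(D_t)\subseteq U$ and maps to $\gamma_t$ under $\pi_*$; thus $\gamma_t=1$ in $\pi_1(U)$. As the $\gamma_t$ generate $\pi_1(\pi^{-1}(V))$, we conclude $\pi_1(U)=1$.

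The delicate step is the second paragraph: the Nori--Koll\'ar sequence provides exactness with respect to a general smooth fibre $E$ rather than the prescribed special fibre $F_0$, and the deformation retraction $\pi^{-1}(D_0)\simeq F_0$ is the bridge that forces the inclusion $E\hookrightarrow\pi^{-1}(V)$ to be null-homotopic on $\pi_1$. One must also be careful to track basepoints throughout. The remaining ingredients --- codimension-one purity for $\pi_1$, the local structure of Kodaira fibre neighborhoods, and the construction of local sections through the implicit function theorem at smooth submersion points --- are standard.
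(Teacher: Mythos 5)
Your argument is correct in substance, but it takes a genuinely different route from the paper. You work topologically over $\C$ (via Lefschetz and the comparison between topological and \'etale fundamental groups): you restrict to $\pi^{-1}(V)$ for $V=\P^1\setminus\pi(Z)$, use purity to get $\pi_1(\pi^{-1}(V))\twoheadrightarrow\pi_1(U)$, use the homotopy exact sequence of the fibration plus the retraction of $\pi^{-1}(D_0)$ onto the simply connected fibre to identify $\pi_1(\pi^{-1}(V))$ with $\pi_1(V)$, and then kill the boundary loops $\gamma_t$ by local holomorphic sections through smooth points of multiplicity-one components not in $Z$. The paper instead argues purely algebraically with \'etale covers: given a connected finite \'etale cover of $U$, it normalises $X$ in it, takes the Stein factorisation $Y\to C\to\P^1$ of the composite, uses hypothesis (i) to show $\deg(C\to\P^1)=\deg(Y\to X)$ (the cover trivialises over the simply connected fibre), and uses hypothesis (ii) to show $C\to\P^1$ is unbranched, hence of degree one. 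The two proofs use the hypotheses in exactly parallel ways (the simply connected fibre controls the fibre direction, the reduced component off $Z$ controls the base direction), but yours proves the stronger topological statement and needs complex-analytic input, while the paper's works scheme-theoretically and avoids the fibration exact sequence altogether.

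One point in your writeup needs to be made precise: the sequence $\pi_1(E)\to\pi_1(\pi^{-1}(V))\to\pi_1(V)\to 1$ is \emph{not} exact in the middle for an arbitrary proper surjective morphism with connected general fibre --- multiple fibres obstruct it, and the correct general statement replaces $\pi_1(V)$ by an orbifold fundamental group with weights at the multiple fibres. Here you are saved by hypothesis (ii), which guarantees that \emph{every} fibre has a reduced component, so there are no multiple fibres and the naive sequence is exact; you should say so explicitly, since as you invoke it the cited theorem is false in that generality. The remaining imprecisions (the components $C_t$ need not be isomorphic to $\P^1$, and $\pi_1(V)$ is generated by, rather than free on, the loops $\gamma_t$) are harmless, as your argument only uses that a suitable smooth point of the fibre exists and that the $\gamma_t$ generate.
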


(For a relatively minimal fibration, the simply connected fibers are precisely those not of type $I_n$.)

\begin{proof}
    Let $U:= X \setminus Z$, $V \to U$ be a connected finite étale cover, and $\psi:Y \to X$ be the relative normalisation of $X$ in $V$. Let $g \coloneqq \pi \circ \psi:Y \to \P^1$, and $Y \xrightarrow{h} C \xrightarrow{p} \P^1$ be its Stein factorisation. In particular, $p$ is finite, the fibers of $h$ are connected \cite[Théorème 4.3.1]{EGAIII.I}, and $C$ is integral and normal \cite[Lemme 8.8.6.1]{EGAII} (compare also with \cite[(6.3.9)]{EGAII}). From these properties, we also infer that the number of connected components $\#\pi_0(g^{-1}(t))$ of a fiber $g^{-1}(t)$ is $\#p^{-1}(t)$.
    
    Consider the commutative diagram
    \[
    \begin{tikzcd}
        Y \arrow[r, "h"]\arrow[d, "\psi"'] & C \arrow[d, "p"]\\ 
        X \arrow[r, "\pi"] &\P^1,
    \end{tikzcd}
    \]
    with diagonal composition $g$. The first hypothesis guarantees the existence of a simply connected fiber $\pi^{-1}(t)$ disjoint from $Z$, and so in particular over which $\psi$ is unramified. The cover $g^{-1}(t)=\psi^{-1}(\pi^{-1}(t)) \to \pi^{-1}(t)$ is thus unramified, and hence trivial, giving $\#p^{-1}(t)=\#\pi_0(g^{-1}(t)) = \deg \psi$. Since clearly $\deg p \leq \deg \psi$, this gives $\deg p=\deg \psi$.
        
    Assume now by contradiction $\deg p > 1$. Then some $t \in \P^1$ lies in the branch locus of $C \to \P^1$, and by pullback invariance of the branch locus, the fiber $\pi^{-1}(t)$ is contained in the branch locus of the projection $pr_1:X \times_{\P^1}C \to X$. Let $\xi \in \pi^{-1}(t)$ be the generic point of a component of multiplicity $1$ as in the second hypothesis. Then $f:X \to \P^1$ is smooth at $\xi$, and its base-change $X \times_{\P^1}C \to C$ is smooth near $pr_1^{-1}(\xi)$. As $C$ is regular, $X \times_{\P^1}C$ is regular there and coincides with its normalisation $\widetilde{X \times_{\P^1}C}$. Thus $\xi$ is contained in the branch locus of $\widetilde{X \times_{\P^1}C} \to X$. But $\xi\in U$, where $Y \to X$ is unramified (and hence any subcover, such as $\widetilde{X \times_{\P^1}C} \to X$), yielding a contradiction.
    \end{proof}

\subsection{Regular variants of the Hil\-bert Property}

Recall that a transcendental extension $k(V)/k$ is {\em regular} if $k$ is algebraically closed in $k(V)$. The following analog of the aforementioned \cite[Prop.~3.3.1]{Serre} serves as a useful tool to realise {\em regular} Galois representations.

\begin{proposition}\label{Prop331regular}
    Let $k$ be a field of characteristic $0$, and $k(V)$ be a regular finitely generated transcendental extension of $k$. Let $\phi:Y\to X$ be a  Galois cover with group $G$ of geometrically integral $k(V)$-varieties. Let $S \subset X(k(V))$ be a subset whose natural image in $X(\bar k(V))=X_{\bar k(V)}(\bar k(V))$ is a non-thin set of points of the base-changed variety $X_{\bar k(V)}$. Then there are Zariski-dense $x \in S$ for which the fibre $\phi^{-1}(x)$ is irreducible with regular function field over $k$, and with $\Gal(\phi^{-1}(x)/k(V))\simeq G$.
\end{proposition}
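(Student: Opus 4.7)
The plan is to reduce the statement to the standard thin-set analysis of \cite[Prop.~3.3.1]{Serre} applied after base change to $\bar k(V)$, and then to extract the regularity of the specialised extension from a tensor-product computation.

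My first step would be to base-change $\phi$ to $\bar k(V)$. Since $k(V)/k$ is regular, $k(V)\otimes_k\bar k$ is a field equal to $\bar k(V)$, and together with the geometric integrality of $X$ and $Y$ this makes $\phi_{\bar k(V)}\colon Y_{\bar k(V)}\to X_{\bar k(V)}$ a Galois cover of integral $\bar k(V)$-varieties with group $G$. For each of the finitely many maximal proper subgroups $H\lneq G$, I would consider the intermediate quotient $Y/H\to X$, which is a cover of geometrically integral $k(V)$-varieties of degree $[G:H]\geq 2$; after base change to $\bar k(V)$, its image in $X_{\bar k(V)}(\bar k(V))$ is thin. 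Let $T$ denote the (still thin) union of these finitely many sets. The standard torsor argument then shows that whenever $x\in X(\bar k(V))\setminus T$, the fibre $\phi_{\bar k(V)}^{-1}(x)=\mathrm{Spec}(L')$ consists of a single $G$-Galois field extension $L'/\bar k(V)$. Since the image of $S$ in $X_{\bar k(V)}(\bar k(V))$ is non-thin by hypothesis, $S\setminus T$ is non-thin as well, and hence in particular Zariski-dense in $X$.

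Finally, for $x\in S\setminus T$, I would write $\phi^{-1}(x)=\mathrm{Spec}(R)$ for an étale $k(V)$-algebra $R$ of rank $|G|$. The condition $x\notin T$ forces $R\otimes_{k(V)}\bar k(V)$ to be a field, and since base change to a field extension cannot destroy idempotents, $R=L$ is itself a field; hence $\phi^{-1}(x)$ is irreducible and $L/k(V)$ is Galois with group $G$. The regularity of $L/k$ would then follow from the identification
\[
L\otimes_k\bar k \;=\; L\otimes_{k(V)}\bigl(k(V)\otimes_k\bar k\bigr) \;=\; L\otimes_{k(V)}\bar k(V),
\]
whose right-hand side is a field---equivalent, for the finite extension $L/k(V)$, to $k$ being algebraically closed in $L$. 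The main conceptual point, and the only step requiring care, is the realisation that the single condition ``$\phi_{\bar k(V)}^{-1}(x)$ is $G$-Galois over $\bar k(V)$'' simultaneously encodes both irreducibility of the $k(V)$-fibre and regularity of its function field over $k$; once this is recognised, the non-thinness hypothesis over $\bar k(V)$ does the rest.
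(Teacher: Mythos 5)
Your proposal is correct and follows essentially the same route as the paper: avoid the thin set $\bigcup_{H\lneq G}(Y_{\bar k(V)}/H)(\bar k(V))$, deduce that the base-changed fibre is an irreducible $G$-torsor over $\bar k(V)$, and descend via the identification $\phi^{-1}(x)\otimes_{k(V)}\bar k(V)=\bar\phi^{-1}(\bar x)$. Your tensor-product computation $L\otimes_k\bar k=L\otimes_{k(V)}\bar k(V)$ makes explicit the regularity step that the paper leaves implicit, but the argument is the same.
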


\begin{proof}
    The same argument of \cite[Prop.~3.3.1]{Serre} works here. Namely, take $x \in S$ whose natural image $\bar x$ in $X_{\bar k(V)}(\bar k(V))$ is not contained in the union $\bigcup_{H \lneq G}(Y_{\bar k(V)}/H)(\bar k(V))$. Following the argument of {\em loc.cit.}, we see that the fiber $\bar\phi^{-1}(\bar x)$ of $\bar x$ under the base-changed morphism $\bar \phi:Y_{\bar k(V)} \to X_{\bar k(V)}$ is irreducible and $\Gal(\bar \phi^{-1}(\bar x)/\bar k(V))\simeq G$.     
    Denoting by $L/k(V)$ the étale-algebra such that $\operatorname{Spec} L=\phi^{-1}(x)$, we have $\operatorname{Spec} (L \otimes_{k(V)}\bar k(V))=\bar \phi^{-1}(\bar x)$. Thus $L \otimes_{k(V)}\bar k(V)$ is a field, and its subalgebra $L$ is as well. The extension $L/k(V)$ is Galois with group $G$ by construction.
\end{proof}

The following theorem is a modification of Theorem~\ref{Thm:Hilbertellptic}.

\begin{theorem}\label{Thm:Hilbertellpticregular}
    Let $X$ be a K3 surface over a field $k$ of characteristic $0$, that is endowed with genus $1$ fibrations $\pi_1,\ldots,\pi_n:X \to \P^1_k, \ n \geq 2$, which are pairwise distinct up to automorphisms of the base. Let $Z \subseteq X_{\overline{k}}$ be the union of all irreducible divisors of $X_{\overline{k}}$ that, for each $i$, are contained in a fiber of $\pi_i$. If
    \begin{enumerate}
        \item the image of the non-constant elements of $X(k(t))$ (where $p \in X(k(t))$ is constant if the corresponding map $\P^1_k \to X$ is) under the identification $X(k(t))=X_{k(t)}(k(t))$ is Zariski-dense in the base-change $X_{k(t)}$, and 
        \item $X_{\overline{k}} \setminus Z$ is simply connected,
    \end{enumerate}
    then the natural image of $X(k(t))$ in $X_{\bar k(t)}$ is non-thin.
\end{theorem}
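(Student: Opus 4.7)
The plan is to apply Theorem~\ref{Thm:Hilbertellptic} to $X_K$ over $K := k(t)$ and then upgrade the conclusion from $K$-non-thinness of $X(k(t))$ to non-thinness in $X_{\bar k(t)}$ over $\bar k(t)$ via a Galois-descent argument for thin sets.

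First, we verify the hypotheses of Theorem~\ref{Thm:Hilbertellptic} for $X_K / K$ with the base-changed fibrations $\pi_{i,K} \colon X_K \to \P^1_K$. These remain pairwise inequivalent up to automorphisms of the base: an equivalence given by $\sigma \in \PGL_2(k(t))$ would specialise at a generic $t_0 \in k$ to an equivalence in $\PGL_2(k)$ between $\pi_i$ and $\pi_j$, contradicting the hypothesis. Hypothesis~(1) directly gives Zariski density of $X(k(t))$ in $X_K$. For the simply-connected-complement condition, one must show that the over-exceptional divisor $Z'$ of the base-changed fibrations over the algebraic closure $\overline{k(t)}$ coincides with $Z_{\overline{k(t)}}$, the base-change of the original $Z \subset X_{\bar k}$. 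Here the K3 hypothesis is essential: $\operatorname{Pic}(X_{\bar k}) = \operatorname{Pic}(X_{\overline{k(t)}})$ (K3 surfaces have discrete Picard group, invariant under algebraically closed extensions in characteristic zero), and each $(-2)$-curve is rigid by Riemann--Roch (so $h^0(\cO(D)) = 1$ and each divisor class has a unique effective representative); consequently no new irreducible divisors, and in particular no new common vertical divisors, appear under base change. Simple connectedness then transfers from $X_{\bar k} \setminus Z$ to $X_{\overline{k(t)}} \setminus Z_{\overline{k(t)}}$ by invariance of the étale fundamental group under extensions of algebraically closed fields of characteristic zero. Theorem~\ref{Thm:Hilbertellptic} thus yields that $X(k(t))$ is not thin in $X_K$ over $K$.

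To upgrade to non-thinness in $X_{\bar k(t)}$ over $\bar k(t)$, suppose for contradiction that the image of $X(k(t))$ were thin there: contained in a finite union of $\pi_i(Y_i(\bar k(t))) \cup W_j(\bar k(t))$ for covers $Y_i \to X_{\bar k(t)}$ of degree $\geq 2$ and proper subvarieties $W_j \subsetneq X_{\bar k(t)}$. Since $\bar k(t)/k(t)$ is Galois with group $\Gal(\bar k / k)$, all this data descends to a finite sub-extension $L = k'(t)$ with $k'/k$ finite Galois inside $\bar k$. Subvarieties descend directly: $\bigcup_{\sigma \in \Gal(L/K)} \sigma W_j$ is $\Gal(L/K)$-invariant and proper, hence descends to a proper $K$-subvariety of $X_K$ absorbing $X(K) \cap W_j(\bar k(t))$. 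For the covers, take the Galois closure of the function-field extension $L(Y_i)/K(X)$ to produce a Galois $K$-cover $\widetilde Y_i \to X_K$ of degree $\geq 2$ with group $G_i$ fitting in $1 \to H_i \to G_i \to \Gal(k'/k) \to 1$; a careful analysis of the $G_i$-torsor structure of the fibers $\widetilde Y_{i,s}$ for $s \in X(K)$, combined with the inflation--restriction sequence, shows that $X(K) \cap \pi_i(Y_i(\bar k(t)))$ is absorbed by $\widetilde\pi_i(\widetilde Y_i(K))$ modulo a proper $K$-subvariety. The resulting $K$-thin cover of $X(k(t))$ contradicts the conclusion of the first step.

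The main obstacle is the descent of cover-type thin sets: the subtlety is that $\pi_i(Y_i(\bar k(t)))$ records $s \in X(K)$ whose fiber has a $\bar k(t)$-lift rather than an $L$-lift, so a naïve Weil-restriction argument captures only the weaker inclusion, and one must execute the Galois-closure construction together with a careful analysis of the two Galois layers (coming from the cover and from the extension $L/K$).
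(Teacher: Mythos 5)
Your first step is reasonable (modulo the point that Theorem~\ref{Thm:Hilbertellptic} is only available when the base field is finitely generated over $\Q$, so applying it over $k(t)$ already restricts the generality of $k$), and your observation that the over-exceptional divisor does not grow under base change follows from Picard invariance for K3 surfaces. The fatal problem is the second step. You are claiming, in effect, that for a subset $S\subseteq X(K)$ with $K=k(t)$, thinness of the image of $S$ in $X_{\bar k(t)}$ forces thinness of $S$ over $K$; your argument is purely formal Galois descent, uses nothing about $X$, and never invokes the non-constancy in hypothesis~(1), so if it worked it would prove this implication for every $S$. It is false. Take $X=\A^1$, $K=\Q(t)$ and $S=\A^1(\Q)\subset\A^1(\Q(t))$ the constant points. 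Then $S\subseteq\pi(Y(\bar\Q(t)))$ for the single cover $Y:y^2=u$ over $\bar\Q(t)$, since every rational number is a square in $\bar\Q$; so the image of $S$ is thin over $\bar\Q(t)$. But $S$ is not thin over $\Q(t)$: a rational section of a finite cover over a constant fibre $\{c\}\times U_t$ extends to all of $U_t$ by properness, so a thin covering of $S$ over $\Q(t)$ would specialise, at any $t_0$ outside a thin subset of $\Q$, to a thin covering of $\A^1(\Q)$ over $\Q$, contradicting Hilbertianity of $\Q$. Note that here the cover is already defined over $K$, so $L=K$ and no Galois-closure or inflation--restriction gymnastics arise: your ``absorption'' claim asserts $\A^1(K)\cap\pi(Y(\bar\Q(t)))\subseteq\pi(Y(K))$ up to a proper closed subset, yet the left side contains the non-thin set $\Q$ while $\pi(Y(K))\cap\Q=\Q^{\times2}\cup\{0\}$. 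More structurally, whether $x\in X(K)$ lifts to a $\bar k(t)$-point of $Y_i$ is governed by whether the \emph{geometric} part of the specialised decomposition group lands in a conjugate of $H_i$; the locus where this geometric part degenerates is naturally an infinite union of thin sets over $K$ (one for each constant field extension $k''/k$ of bounded degree), and proving it is thin over $K$ is equivalent to the geometric Hilbert property you are trying to establish. There is no formal descent from $\bar k(t)$ to $k(t)$.

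This is precisely the phenomenon the paper's proof is organised around, and it explains why hypothesis~(1) demands Zariski-density of the \emph{non-constant} points: constant points are swallowed by constant covers over $\bar k(t)$ and can contribute nothing. The paper does not pass through non-thinness over $k(t)$ at all. It reruns the multiple-fibration method of \cite{Demeio} directly with $\bar k(t)$-points of the covers, replacing Faltings' theorem by Manin--Grauert (Theorem~\ref{Thm:DFMG}), which gives finiteness of $\bar k(t)$-points only for \emph{non-constant} curves of genus $>1$; Theorem~\ref{Thm:IsotrRational} and Lemma~\ref{Lem:Dominant} --- this is where the K3 hypothesis really enters, beyond your Picard-group remark --- guarantee that the genus-one fibres over non-constant points and their horizontally ramified covers are non-constant, after which the weak Mordell--Weil coset argument for $C_p(k(t))$ inside the $\bar k(t)$-point sets closes the induction. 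Any repair of your proposal would have to inject the non-constancy analysis into the fibration method itself rather than attempt a descent of thin sets.
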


One often wishes to take $Z$ minimal, so we record the following immediate corollary:
\begin{corollary}\label{Cor:Hilbertellpticregular}
    Let $X$ be a K3 surface over a field $k$ of characteristic $0$, that is endowed with at least two genus $1$ fibrations $X \to \P^1_k$ distinct up to isomorphism of the base. Let $Z \subseteq X_{\overline{k}}$ be the union of all irreducible divisors of $X_{\overline{k}}$ that are vertical with respect to {\em all} genus $1$ fibrations of $X$. If
    \begin{enumerate}
        \item the image of the non-constant elements of $X(k(t))$ (where $p \in X(k(t))$ is constant if the corresponding map $\P^1_k \to X$ is) under the identification $X(k(t))=X_{k(t)}(k(t))$ is Zariski-dense in the base-change $X_{k(t)}$, and 
        \item $X_{\overline{k}} \setminus Z$ is simply connected,
    \end{enumerate}
    then the natural image of $X(k(t))$ in $X_{\bar k(t)}$ is non-thin.
\end{corollary}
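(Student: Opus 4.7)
The plan is to reduce directly to Theorem \ref{Thm:Hilbertellpticregular} by exhibiting a finite family of $k$-rational genus-$1$ fibrations of $X$ whose common vertical divisor coincides with $Z$.

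Since $X_{\bar k}$ is a K3 surface, the N\'eron--Severi lattice $\NS(X_{\bar k})$ is free of finite rank and carries the intersection pairing. For any genus-$1$ fibration $\pi:X\to\P^1_k$ with fiber class $F_\pi\in\NS(X_{\bar k})$, the class $F_\pi$ is nef and isotropic ($F_\pi^2=0$), so for every irreducible curve $D\subseteq X_{\bar k}$ one has $[D]\cdot F_\pi\geq 0$, with equality precisely when $D$ is vertical for $\pi$. Consequently, the irreducible components of $Z$ are exactly the irreducible curves $D$ for which $[D]$ is orthogonal to the $\Q$-subspace $V\subseteq\NS(X_{\bar k})_\Q$ spanned by the fiber classes of all genus-$1$ fibrations of $X$ appearing in the definition of $Z$.

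Since $V$ is finite-dimensional, one can choose fibrations $\pi_1,\ldots,\pi_n:X\to\P^1_k$, pairwise distinct up to base automorphism with $n\geq 2$ (possible by the standing hypothesis, enlarged if necessary to gain the spanning property), so that $F_{\pi_1},\ldots,F_{\pi_n}$ span $V$ over $\Q$; distinct fiber classes automatically force distinctness up to base automorphism. For this choice, the orthogonality conditions $[D]\cdot F_{\pi_i}=0$ for $i=1,\ldots,n$ and $[D]\perp V$ coincide, so the common vertical divisor of $\pi_1,\ldots,\pi_n$ in the sense of Theorem \ref{Thm:Hilbertellpticregular} equals $Z$. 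The two hypotheses of the corollary then become verbatim those of the theorem applied to this choice, and the conclusion follows.

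The engine of the argument is the translation of verticality into an orthogonality condition in $\NS(X_{\bar k})$, which turns an \emph{a priori} infinitary condition into one checkable against finitely many classes. The only point requiring minor care is the availability of enough $k$-rational fibrations to span $V$; this is unproblematic under the natural reading that $Z$ is defined with respect to $k$-rational fibrations of $X$, and is in any case automatic for the Hilbert modular K3 surfaces to which the corollary will be applied.
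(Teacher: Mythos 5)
Your proof is correct, and it reaches the same reduction as the paper --- producing a finite subcollection of $k$-rational genus~$1$ fibrations whose common vertical divisor equals $Z$, then invoking Theorem~\ref{Thm:Hilbertellpticregular} --- but by a genuinely different finiteness mechanism. The paper argues set-theoretically: writing $Z=\bigcap_\Pi Z_\Pi$ over finite collections $\Pi$, it observes that $Z_{\pi_1,\pi_2}$ is already a finite union of irreducible divisors (the two given fibrations generate distinct pencils), so the decreasing intersection stabilises at some finite $\Pi$. You instead translate ``$D$ is vertical for $\pi$'' into the numerical condition $[D]\cdot F_\pi=0$ in $\NS(X_{\bar k})$, so that $Z$ is cut out by orthogonality to the span $V$ of all fiber classes, and finite rank of $\NS$ lets you pick finitely many spanning fibrations. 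Both are sound; the paper's route is more elementary and does not need the K3 hypothesis beyond what the Theorem already requires, while yours makes the structure more transparent (verticality as orthogonality) and in principle gives a sharper handle on which fibrations suffice. The minor points you flag are indeed harmless: the two given fibrations have distinct primitive nef isotropic fiber classes, hence linearly independent ones, so $\dim V\geq 2$ and a spanning set with $n\geq 2$ always exists; and the paper's own proof confirms your reading that $Z$ is taken with respect to fibrations $X\to\P^1_k$ defined over $k$.
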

\begin{proof}
    For every finite collection $\Pi$ of genus $1$ fibrations $X \to \P^1_k$, define $Z_{\Pi}$ as the union of irreducible divisors that are contained, for each $\pi \in \Pi$, in a fiber of $\pi$. We have
    \[
    Z=\bigcap_{\Pi}Z_{\Pi}.
    \]
    Letting $\pi_1,\pi_2$ be the two genus $1$ fibrations given at the beginning, certainly $Z\subset Z_{\pi_1,\pi_2}$, and $Z_{\pi_1,\pi_2}$ is a finite union of divisors as $\pi_1$ and $\pi_2$ generate distinct pencils. Thus the (possibly infinite) intersection above happens in the finite union of divisors $Z_{\pi_1,\pi_2}$, and so there exists $\Pi$ with $Z_{\Pi}=Z$. Let now $\{\pi_1,\pi_2,\pi_3,\ldots,\pi_n\}:=\Pi$ and apply Theorem \ref{Thm:Hilbertellpticregular}.
\end{proof}

We call the $Z$ appearing in Corollary \ref{Cor:Hilbertellpticregular} the {\em over-exceptional divisor} of $X$.

\vskip1mm

To prove Theorem \ref{Thm:Hilbertellpticregular}, we adapt the proof in \cite{Demeio}. The only issue in this adaptation is that the proof in \cite{Demeio} relies on an application of Faltings’ theorem to some curves which might {\em a priori} be constant in our new setting, and Manin--Grauert's theorem (Theorem \ref{Thm:DFMG} below), which is ``Faltings' analogue for function fields'', does not apply to constant curves. However, we manage to prove that the constancy never happens when $X$ is a K3 surface.

\vskip1mm

A curve $C/k(V)$ is {\em constant} over $k$ if there exists a curve $C_0/k$ such that $C\cong C_0 \otimes_kk(V)$.

\begin{theorem}\label{Thm:IsotrRational}
    Let $\pi:X \to \P^1_{\bar k}$ be a genus $1$ fibration, with $X$ a K3 surface defined over an algebraically closed field $\bar k$ of characteristic $0$, and let $u$ be the affine coordinate of $\P^1_{\bar k}$. Then, for all rational extensions $\bar k(t)/\bar k(u)$, the base-change $X_{\bar k(t)}$ of the generic fiber $X_{\bar k(u)}\coloneqq \pi^{-1}(\operatorname{Spec} \bar k(u))$ is a non-constant curve.
\end{theorem}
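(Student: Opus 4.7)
The plan is to argue by contradiction. Suppose $X_{\bar k(t)} \cong C_0 \otimes_{\bar k} \bar k(t)$ for some smooth projective genus $1$ curve $C_0/\bar k$. Since base change preserves the $j$-invariant, the generic fibre $X_{\bar k(u)}$ then has $j$-invariant $j_0 := j(C_0) \in \bar k$, so $\pi$ is isotrivial.

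Next I would pass to the associated Jacobian fibration $J := J(X) \to \P^1_{\bar k}$. Because K3 surfaces are simply connected, $\pi$ admits no multiple fibres, and the standard theory of elliptic surfaces then guarantees that $J$ is again an elliptic K3 surface with the same Kodaira types of singular fibres as $X$. Moreover $J_{\bar k(u)}$ has the same $j$-invariant $j_0 \in \bar k$ and, by functoriality of the Jacobian, $J_{\bar k(t)} \cong J(C_0) \otimes_{\bar k} \bar k(t)$ is constant. Since $\bar k$ is algebraically closed of characteristic $0$, the automorphism group of the constant elliptic curve $E_0/\bar k$ with $j(E_0) = j_0$ is $\mu_n$ with $n \in \{2, 4, 6\}$ (according to whether $j_0 \neq 0, 1728$; $j_0 = 1728$; or $j_0 = 0$), so by Kummer theory the twist class of $J_{\bar k(u)}$ is represented by an element $D \in \bar k(u)^*/(\bar k(u)^*)^n$. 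The vanishing of this class over $\bar k(t)$ is equivalent to $D$ being an $n$-th power in $\bar k(t)^*$, which yields an embedding $\bar k(u, D^{1/n}) \hookrightarrow \bar k(t)$. By Lüroth's theorem, $\bar k(u, D^{1/n})$ is then a rational function field over $\bar k$ and in particular has genus $0$.

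The final step is combinatorial: the residue of $v_P(D)$ modulo $n$ simultaneously controls the ramification of the degree $n$ Kummer extension at $P$ and the Kodaira type of the singular fibre of $J$ over $P$, via the familiar dictionary (for $n=2$, odd $v_P(D)$ gives type $I_0^*$; for $n=4$, residues $1, 2, 3$ give $III$, $I_0^*$, $III^*$; for $n=6$, residues $1, 2, 3, 4, 5$ give $II$, $IV$, $I_0^*$, $IV^*$, $II^*$). Letting $a_i$ denote the number of places with $v_P(D) \equiv i \pmod n$, the K3 identity $\sum_P e(F_P) = 24$ and the Riemann--Hurwitz formula applied to the Kummer cover produce two linear equations in the $a_i$. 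These turn out to be incompatible in every case: for $j_0 \neq 0, 1728$ one gets $a_1 = 4$ and the cover is a double cover of $\P^1$ branched at $4$ points, hence of genus $1$ rather than $0$; for $j_0 = 1728$ the two equations force $a_1 = -1$; and for $j_0 = 0$ they force $4a_1 + 2a_2 = -2$. Each contradiction completes the argument.

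The main subtlety lies in the reduction to the Jacobian, which requires checking that $J(X)$ is again K3 with the same singular fibre types; this in turn rests on the absence of multiple fibres on K3 elliptic fibrations, a consequence of the simple-connectedness of $X$. Once this setup is in place, the conclusion reduces to a direct Euler-characteristic plus Riemann--Hurwitz computation in each of the three cases above.
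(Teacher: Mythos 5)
Your proposal is correct in substance but reaches the final contradiction by a genuinely different route from the paper. Both arguments share the same setup: reduce to the Jacobian fibration (again a K3 surface), observe that constancy of $X_{\bar k(t)}$ forces isotriviality, describe the generic fibre as a twist of a constant elliptic curve split by a cyclic extension $F/\bar k(u)$ of degree dividing $n\in\{2,4,6\}$, and use L\"uroth to conclude that the curve $C$ with function field $F$ is rational. At that point the paper argues geometrically: $X$ is birational to $(E\times C)/G$, whose projection to $E/G\cong\P^1$ is a conic bundle (its smooth fibres are forms of the rational curve $C$), hence $X$ is rational by Tsen's theorem, contradicting the fact that a K3 surface is not rational. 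You instead argue numerically: the Euler number $24$ constrains the multiset of Kodaira fibre types read off from $v_P(D)\bmod n$, Riemann--Hurwitz for the Kummer cover constrains the same data via $g(C)=0$, and the two linear conditions are incompatible. The paper's argument is shorter, uniform in $n$, and avoids the Kodaira-type dictionary entirely; yours is more explicit and computation-checkable, and your three case computations are correct as stated.

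One point you should make explicit: your Riemann--Hurwitz equations implicitly assume that the twist class $D$ has exact order $n$ in $\bar k(u)^{\times}/(\bar k(u)^{\times})^{n}$. If its order is a proper divisor $m\mid n$ (possible when $j_0=0$ or $1728$), then every $v_P(D)$ is divisible by $n/m$, the splitting field $\bar k(u,D^{1/n})$ has degree only $m$ over $\bar k(u)$, and the ramification contributions in Riemann--Hurwitz change accordingly, so your displayed equations do not literally apply. The contradiction still goes through in each sub-case (for instance, order $2$ always forces four $I_0^*$ fibres and hence a genus-$1$ double cover), but this is a separate, if easy, verification that your write-up currently skips.
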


\begin{proof}
    It suffices to prove that the Jacobian curve of $X_{\bar k(t)}$ is non-constant, so we may replace $\pi$ with its Jacobian fibration, which is again a K3 surface \cite[Sec.~11.4]{Huybrechts}, and assume that $\pi$ has a section. We may also assume that $\pi:X \to \P^1_{\bar k}$ is isotrivial, as the statement is automatic otherwise. The elliptic surface $X$ is then (a regular relatively minimal model of) a twist of a constant surface $E \times \P^1_{\bar k}$ by some cocycle $\alpha \in H^1(\bar k(u),\Aut(E))=\Hom(\Gamma_{\bar k(u)},\Aut(E))$, where $\Aut(E)\in \{\Z/n\Z: n = 2,3,4,6\}$ denotes the automorphism group of the elliptic curve $E$. Let $F \subset \overline{k(u)}$ be the subfield fixed by $\operatorname{Ker} \alpha$, so that $G \coloneqq \Gal(F/\bar k(u))= \operatorname{Im} \alpha \subset \Aut(E)$, and $X$ is birational to the diagonal quotient $(E \times C)/G$, where $C/\bar k$ is a curve with function field $F$. Since $X$ is simply connected, we have $\#G >1.$

    If $X_{\bar k(t)}$ is constant for some rational extension $\bar k(t)/\bar k(u)$, then the restriction of $\alpha$ to $\Gamma_{\bar k(t)}$ is trivial, i.e.\ $F \subset \bar k(t)$ and $F=\bar k(v)$ for some $v \in F$ by L\"uroth's theorem, i.e.\ $C$ is a rational curve. Now, the smooth fibers of the projection $(E \times C)/G \to E/G$ are forms of $C$, and are in particular geometrically rational. The curve $E/G$ has genus $0$ since it is dominated by the elliptic curve $E$ via a ramified map. It follows that $(E \times C)/G$ is birational to a conic bundle over $\P^1_{\bar k}$, and is thus rational by Tsen's theorem, giving a contradiction as $X$ is a K3 surface.
\end{proof}

In place of Faltings' theorem, we shall use Manin--Grauert's theorem \cite{Samuel}.

\begin{theorem}[Manin--Grauert]\label{Thm:DFMG}
    Let $C/\bar k(V)$ be a geometrically irreducible curve of genus $>1$ defined over a finitely generated field $\bar k(V)$ over an algebraically closed field $\bar k$ of characteristic $0$. If $C$ is non-constant over $\bar k$, then $C(\bar k(V))$ is finite.
\end{theorem}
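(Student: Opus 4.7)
The plan is a two-stage argument: reduce to a function field in one variable, then invoke the classical geometric Mordell conjecture in that setting.

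First I would reduce to the case where $\bar k(V)$ has transcendence degree $1$ over $\bar k$ by induction on $n=\operatorname{trdeg}_{\bar k}\bar k(V)$. If $n\geq 2$, realise $\bar k(V)$ as the function field of a smooth projective model $V$, and spread $C$ out to a flat family $\pi:\mathcal{C}\to U$ of smooth proper curves of genus $>1$ over a dense open $U\subseteq V$; then $\bar k(V)$-points of $C$ correspond to rational sections of $\pi$. Fix a very ample line bundle $L$ on $V$ and take a Lefschetz pencil in $|L|$. By Bertini, a general member $W\in|L|$ is smooth and irreducible, and the moduli map $U\to M_g$ (which is non-constant since $C$ is non-constant over $\bar k$) stays non-constant on $W\cap U$ for generic $W$, for otherwise it would factor through the pencil, contradicting dominance. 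Assuming for contradiction that $C(\bar k(V))$ is infinite, its Zariski-closure meets a generic $W$ in an infinite set, producing infinitely many sections of $\mathcal{C}_W\to W\cap U$. The generic fibre over $\bar k(W)$ is then a non-constant curve of genus $>1$ with infinitely many $\bar k(W)$-points, and iterating brings us to the case $n=1$.

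In the base case $\bar k(V)=\bar k(B)$ for a smooth projective curve $B/\bar k$, and spreading out yields a non-isotrivial smooth projective family $\pi:\mathcal{C}\to B$ of genus $g\geq 2$ curves, with $C(\bar k(B))$ bijecting with the sections of $\pi$. The statement that this set of sections is finite is precisely the function-field Mordell conjecture. The standard algebraic proof, due to Manin and placed in a clean form by Grauert--Samuel \cite{Samuel}, establishes a Parshin--Arakelov-style height inequality bounding the self-intersection $(\sigma\cdot\sigma)_{\mathcal{C}}$ of a section linearly in $g(B)$ and invariants of $\pi$, and combines it with a rigidity statement showing that the Hom-scheme $\underline{\operatorname{Hom}}_B(B,\mathcal{C})$ parametrising sections is zero-dimensional once $\pi$ is non-isotrivial; together these force finiteness.

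The main obstacle is concentrated entirely in the base case, which is the substance of the classical Manin--Grauert theorem and should simply be quoted rather than reproved here. The only non-trivial verification on our side is the Bertini step in the reduction: one must ensure that both non-constancy of the moduli map and Zariski-density of the given infinite set of points descend to a generic hyperplane section, both of which are routine consequences of standard dimension counts for base-point-free pencils.
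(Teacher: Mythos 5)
The paper does not prove this statement at all: it is quoted as a classical theorem with a reference to \cite{Samuel} (and the standard references, e.g.\ Samuel's exposition of Grauert's argument or Lang's \emph{Fundamentals of Diophantine Geometry}, already state the geometric Mordell conjecture for an arbitrary finitely generated extension of $\bar k$, so no reduction to transcendence degree $1$ is needed). Your overall strategy --- reduce to a one-dimensional base and quote Manin/Grauert there --- is reasonable in spirit, but as written the reduction has a genuine gap.

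The gap is the parenthetical claim that the moduli map $U\to M_g$ ``is non-constant since $C$ is non-constant over $\bar k$''. This is false: non-constant does not imply non-isotrivial. A curve of genus $\geq 2$ with non-trivial automorphisms admits non-trivial twists over $\bar k(V)$ (e.g.\ the quadratic twist $ty^2=f(x)$ of a hyperelliptic curve over $\bar k(t)$), which are non-constant but have constant moduli map; for these your Bertini argument on the moduli map yields nothing, and in fact the generic hyperplane section $\mathcal C_W$ could \emph{a priori} become constant over $\bar k(W)$, breaking the induction. The theorem is still true in the isotrivial non-constant case (one argues via the splitting field of the twist and the absence of non-constant maps $\P^1\to C_0$ for $g(C_0)\geq 2$), but that case must be handled separately rather than fed into the moduli-map argument. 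A secondary issue: to get infinitely many \emph{distinct} sections after restricting to $W$, you need $W$ to avoid the countably many loci where two given sections agree, which requires a ``very general'' member (hence an uncountability or specialisation argument), not merely a generic one. Given that the reference being cited already covers the higher-dimensional base, the cleanest fix is to drop the reduction entirely and quote the result in the generality in which it is stated.
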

We shall also need:
\begin{lemma}\label{Lem:Dominant}
    Let $C_1 \to C_2$ be a dominant map of geometrically irreducible $\bar k(V)$-curves. If $C_2$ is non-constant over $\bar k$ of genus $\geq 1$, then so is $C_1$.
\end{lemma}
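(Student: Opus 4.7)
The plan is to handle the genus bound and the non-constancy separately. Replacing $C_1, C_2$ by their smooth projective models, the dominant map becomes a finite surjection, and Riemann--Hurwitz immediately yields $2g(C_1)-2 \geq d\,(2g(C_2)-2)$ with $d \geq 1$ the degree, hence $g(C_1) \geq g(C_2) \geq 1$. The substantive content is therefore the non-constancy, which I would prove by contradiction: suppose $C_1 \cong D_0 \otimes_{\bar k} \bar k(V)$ for some smooth projective $\bar k$-curve $D_0$, which must then have genus $\geq 1$ by the previous step. The goal is to deduce that $C_2$ is itself constant, contradicting the hypothesis, and the argument naturally splits on the value of $g(C_2)$.

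For $g(C_2) \geq 2$ the natural tool is Manin--Grauert (Theorem~\ref{Thm:DFMG}), which is already used elsewhere in this section. The constancy of $C_1$ produces the infinite subset $D_0(\bar k) \subset C_1(\bar k(V))$; since $f$ is a finite map, the image of this set in $C_2(\bar k(V))$ is still infinite, which contradicts the finiteness of $C_2(\bar k(V))$ predicted by Manin--Grauert for a non-constant curve of genus $\geq 2$.

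For $g(C_2)=1$ one cannot invoke Manin--Grauert, since elliptic curves can have infinitely many rational points, so I would pass to Jacobians. Fixing $p \in D_0(\bar k)$ and setting $p_0 := f(p) \in C_2(\bar k(V))$ turns $C_2$ into an elliptic curve with origin $p_0$. Albanese functoriality then supplies a surjective homomorphism of abelian varieties $\phi \colon \mathrm{Jac}(D_0) \otimes_{\bar k} \bar k(V) \twoheadrightarrow C_2$ over $\bar k(V)$. The decisive input is rigidity: over the algebraically closed characteristic-$0$ field $\bar k$, every abelian subvariety of a constant abelian variety $A_0 \otimes_{\bar k} \bar k(V)$ descends to $\bar k$. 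Applied to $\ker \phi$, this yields $\ker \phi = K_0 \otimes_{\bar k} \bar k(V)$ for some abelian subvariety $K_0 \subset \mathrm{Jac}(D_0)$, so that $C_2 \cong (\mathrm{Jac}(D_0)/K_0) \otimes_{\bar k} \bar k(V)$ is constant, contradicting the hypothesis.

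The genuine obstacle is this rigidity statement in the genus-$1$ case; the rest of the argument amounts to Riemann--Hurwitz, Albanese functoriality, and a citation of Manin--Grauert. Concretely, it is the statement that the Chow $\bar k$-trace of $A_0 \otimes_{\bar k} \bar k(V)$ is $A_0$ itself, or equivalently, in characteristic $0$, that finite étale subgroup schemes of $A_0 \otimes_{\bar k} \bar k(V)$ are controlled by $A_0(\bar k)_{\mathrm{tors}}$, which is pointwise fixed by $\mathrm{Gal}(\overline{\bar k(V)}/\bar k(V))$ and so is already defined over $\bar k$. This is a classical consequence of the rigidity of abelian schemes, but would need to be carefully sourced in the write-up.
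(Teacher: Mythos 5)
Your proposal is correct, but it diverges from the paper's proof in the genus-one case, which is where the real content lies. The paper sets up ample heights on $C_1(\overline{k(V)})$ and $C_2(\overline{k(V)})$ following Serre's \emph{Lectures on the Mordell--Weil theorem}: the constant points $P\otimes\bar k(V)$, $P\in C_{1,0}(\bar k)$, have bounded height, hence push forward to infinitely many points of bounded height on $C_2$; for $g(C_2)>1$ this contradicts Manin--Grauert exactly as in your argument, while for $g(C_2)=1$ it contradicts the classical finiteness of bounded-height points on a non-constant (non-split) elliptic curve over a function field (Lang, Thm.~6.5.3, or Silverman, Thm.~III.5.4). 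Your alternative for the genus-one case — pushing forward to Jacobians and invoking the rigidity of abelian subvarieties of a constant abelian variety (equivalently, that the Chow $\bar k$-trace of $A_0\otimes_{\bar k}\bar k(V)$ is $A_0$) to conclude that $C_2$ would be a constant quotient — is sound and avoids heights altogether, at the price of importing Albanese functoriality and trace theory, which you rightly flag as needing a careful citation. Two small points to tighten in a write-up: $\ker\phi$ need not be connected, so you should explicitly split it into its abelian-subvariety identity component and a finite constant subgroup (your torsion remark covers the latter, but the phrase ``$\ker\phi=K_0\otimes_{\bar k}\bar k(V)$ for some abelian subvariety $K_0$'' is not literally what rigidity gives); and you should note that constancy of $C_2$ as a pointed curve, which is what the Jacobian argument yields, is stronger than the unpointed constancy the lemma negates, so the contradiction indeed obtains. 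The paper's height argument has the virtue of treating both genera with one mechanism and matching the toolkit already deployed in the surrounding section.
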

\begin{proof}
    We may view $\bar k(V)$ as the function field of a smooth projective variety, and use this to define (ample) heights on $C_1(\overline{k(V)})$ and $C_2(\overline{k(V)})$, as in \cite[11-12]{SerreMW}.

    Suppose that $C_1=C_{1,0} \otimes_{\bar k} \bar k(V)$ is constant. The infinitely many points $P \in C_{1,0}(\bar k)$ induce infinitely many $\bar k(V)$-points $P \otimes \bar k(V)$ in $C_1(\bar k(V))$. These have bounded height, and so project to infinitely many points in $C_2(\bar k(V))$ of bounded height. Since $C_2$ is non-constant, Manin--Grauert's theorem gives a contradiction if $g(C_2)>1$, and \cite[Thm.~6.5.3]{Lang} (or \cite[Thm.~III.5.4]{Silverman}) gives a contradiction if $g(C_2)=1$. \qedhere
\end{proof}

\begin{proof}[Proof of Theorem \ref{Thm:Hilbertellpticregular}]
    The proof is simply an adaptation of \cite[Thm.~1.1]{Demeio}, using Manin--Grauert's theorem instead of Faltings' theorem, and Theorem \ref{Thm:IsotrRational} and Lemma \ref{Lem:Dominant} serve to ensure that we never fall in the constant case. We explain how one should modify the proof in \cite{Demeio}, and refer to {\em loc.cit.} for the rest of the details.

    \vskip1mm

    Recall that a cover $\phi: Y \to X$ is {\em vertically ramified} with respect to a fibration $\pi:X \to \P^1$ if the branch divisor of $\phi$ is contained in a union of finitely many fibers of $\pi$, and is {\em horizontally ramified} otherwise.

    \vskip1mm

    Let us prove the following fact, which replaces the application of Faltings' theroem in \cite{Demeio}:
    \begin{center}
        $(\star)$ For a cover $\phi: Y \to X$ that is horizontally ramified with respect to a genus $1$ fibration $\pi:X \to \P^1$, there exists a non-empty open $U \subset X$ such that for all non-constant $p \in U(k(t))$, the curve $(\pi \circ \phi)^{-1}(\pi(p))$ has finitely many $\bar k(t)$-points.
    \end{center}

    In fact, take as $U$ the complement of all singular fibers of $\pi$. Then for all non-constant $p \in U(k(t))$, the image of the corresponding morphism $\rho:\P^1_k \to U$ is a rational curve not contained in the singular fibers of $\pi$, and thus the composition $f \coloneqq \pi \circ \rho:\P^1_k \to X \to \P^1_k$ is surjective and defines a rational extension $k(t)/k(u)$, where $u=f(t)$ is the affine coordinate on the base $\P^1_k$ of the fibration $\pi$. The fiber $\pi^{-1}(\pi(p))$ is the base-change of the generic fiber of $\pi$ along the extension $k(t)/k(u)$, and thus is non-constant over $\bar k$ by Theorem~\ref{Thm:IsotrRational}. 
    
    Now, the morphism $(\pi \circ \phi)^{-1}(\pi(p)) \to \pi^{-1}(\pi(p))$ is ramified. The $k(t)$-curve $(\pi \circ \phi)^{-1}(\pi(p))$ is not necessarily irreducible, but  all of its irreducible components are ramified over $\pi^{-1}(\pi(p))$ (it is clear that at least one component must ramify, see the argument in \cite[10270]{Demeio}\footnote{Or p.\ 12 in the ArXiv version.} for why all of them do), and thus have genus $>1$. Moreover, each of these components is non-constant over $\bar k$ by Lemma \ref{Lem:Dominant}, and $(\star)$ now follows from Manin--Grauert.

    To conclude we now operate as in {\em loc.cit}. Namely, take finitely many covers $\phi_j:Y_j \to X, j=1,\ldots,m$ of degree $\geq 2$ such that 
    \[
    X(k(t)) \setminus \cup_j \phi_j(Y_j(\bar k(t)))
    \]
    is not Zariski--dense in $X_{\bar k(t)}$. Choose a non-empty Zariski--open $U \subset X_{k(t)}$ such that
    \[
    U(k(t)) \subset \cup_j \phi_j(Y_j(\bar k(t))),
    \]
    and shrink $U$ so that $(\star)$ holds with respect to all pairs of $\phi_j$ and $\pi_i$, and, using \cite[Lemma 3.2]{Demeio} (based on Merel's theorem, which holds for the finitely generated field $k(t)$)\footnote{Lemma 3.2 in \cite{Demeio} is formulated for number fields, but the proof only uses Merel's theorem, and thus it also holds for any finitely generated field.}, further shrink $U$ so that for all $p \in U(k(t))$, the fibers $\pi_i^{-1}(\pi_i(p))$ are smooth and have infinitely many $k(t)$-points for all $i$.

    Consider now any non-constant $p \in U(k(t))$ (this exists by the first hypothesis), and let $C_p$ be the $k(t)$-curve $\pi_n^{-1}(\pi_n(p))$. We have
    \begin{multline*}
    p \in C_p(k(t)) \cap U \subset
    \bigcup_{\substack{\phi_j \text{ vertically}\\ \text{ ramified} \text{ wrt }\pi_n}} \varphi_j\left(\phi_j^{-1}(C_p)(\bar k(t))\right)
    \ \cup \bigcup_{\substack{\phi_j \text{ vertically}\\ \text{ ramified} \text{ wrt }\pi_n}} \varphi_j\left(\phi_j^{-1}(C_p)(\bar k(t))\right).
    \end{multline*}

    The second union is a finite set by $(\star)$, and we infer that all but finitely many elements of $C_p(k(t))$ are contained in the first union. The curve $C_p$ is a non-constant $k(t)$-curve of genus $1$, and thus by the weak Mordell--Weil theorem, for each $\phi_j$ that is vertically ramified with respect to $\pi_n$, $\varphi_j\left(\phi_j^{-1}(C_p)(\bar k(t))\right) \cap C_p(k(t))\subset C_p(k(t))$ is either empty or a finite index coset. It then follows from \cite[Lem.~4.1]{Demeio} that $C_p(k(t))$ is fully contained in the first union. 

    Proceeding with an easy induction on $n$, we reduce to the case where all $\phi_j$ are vertically ramified with respect to all $\pi_i$, but there are no such covers by assumption, giving the desired contradiction. \qedhere
\end{proof}

\section{Preliminaries on Hilbert modular surfaces}\label{Sec:shimura}
We follow \cite{VDG}. Let $K=\Q(\sqrt{D})$ be a real quadratic field of discriminant $D$, and let $\mathfrak o$ be its ring of integers. We denote conjugation in $K$ by $a \mapsto a'$. 

We write $\H^+$ and $\H^-$ for the upper and lower half-plane. For each genus $g \in \Cl^+K/2$ of $\mathfrak o$ we may pick a representative fractional ideal $\mathfrak a$ (of norm $A$) and define, for each finite index subgroup $\Gamma \subseteq \PSL(\mathfrak o \oplus \mathfrak a)$, the \emph{Hilbert modular surface}
\[
X_g(\Gamma):=\Gamma \backslash (\H^+)^2,
\]
where the quotient is by the properly discontinuous action:
\[
\begin{pmatrix}
    a & b \\ c&d
\end{pmatrix}\cdot (z_1,z_2) = \left( \frac{az_1+b}{cz_1+d} , \frac{a'z_2+b'}{c'z_2+d'} \right).
\]
Let $\Lambda=\mathfrak o \oplus \mathfrak a$. We call the surface $X_g:= X_g(\PSL(\Lambda))$ of \emph{base level}.\footnote{One could take further quotients of $(\H^+)^2$, the maximum being by the symmetrised Hurwitz-Maass extension of $\PSL(\Lambda)$, but we shall not be interested in these in this paper.} The isomorphism class of $X_g$ is independent of the choice of representative $\mathfrak a$. 

For each $N \in \N$, we have an isomorphism of $\mathfrak o$-modules $\Lambda/N\cong (\mathfrak o/N)^2$, and thus a map 
$$
\PSL(\mathfrak o \oplus \mathfrak a)=\PSL(\Lambda) \twoheadrightarrow \PSL(\Lambda/N) \cong \PSL_2(\mathfrak o/N),
$$
which is surjective by the strong approximation theorem for $\SL_{2,K}$ \cite[Sec.~7.4]{PR}. We denote its kernel by $\PSL(\Lambda,N)$. We let $X_g(N):=X_g(\PSL(\Lambda,N))$. The cover
\begin{equation}\label{Eq:coverC}
    X_g(N) \to X_g
\end{equation}
is  $\PSL_2(\mathfrak o/N)$-Galois.

So far we worked analytically, but we wish to define this cover over $\Q$. We do so via Shimura theory.

\subsection{Hilbert modular surfaces and rational models}\label{SSec:HMSQ}

We provide here a ready-to-use summary for the Shimura theory we need. We follow \cite{Rapoport}, \cite{Deligne}.

\paragraph{Arithmetic models.} Let $G$ be the fibered product:
\[
\begin{tikzcd}
G \arrow[d] \arrow[r] & R_{K/\mathbb{Q}} \GL(2) \arrow[d, "\det"] \\
\G_m \arrow[r, hook]  & R_{K/\mathbb{Q}}\G_m                   
\end{tikzcd}.
\]

Let $\BS\to(R_{K/\mathbb{Q}}\GL(2))_\R\cong\GL(2)_\R^2$ be the morphism of algebraic groups given by
$a+bi\mapsto\left(\begin{pmatrix}
    a & -b \\ b & a
\end{pmatrix}, \begin{pmatrix}
    a & -b \\ b & a
\end{pmatrix}\right),$ where $\BS=R_{\C/\R}\G_m$ is the Deligne torus. This map factors through a morphism $h:\BS\to G_\R$. Writing $\cK_\infty$ for the centraliser of $h$ in $G(\R)$, the $G(\R)$-conjugacy class of $h$ is $D\cong G(\R)/\cK_\infty\cong (\H^+)^2\cup(\H^-)^2$.

The pair $(G,D)$ is a Shimura datum in the sense of \cite[Conditions 2.1.1(1-3)]{Deligne2}. This datum defines the inverse system of complex orbifolds running over compact open subgroups $\cK_f\subset G(\A_\Q^\fin)$:
\[
X_{\cK_f}=G(\Q)\backslash D\times G(\A_\Q^\fin)/\cK_f=G(\Q)\backslash G(\A_\Q)/\cK_f\cdot\cK_\infty,
\]
and:
\begin{align*}
X'_{\cK_f}&=\text{ minimal compactification of } X_{\cK_f} \text{ after Baily--Borel}, \\
\tilde X_{\cK_f}&= \text{ minimal resolution of cusp and elliptic singularities of } X'_{\cK_f},\\
Y_{\cK_f}&= \text{ minimal smooth projective model of } \tilde X_{\cK_f}.
\end{align*}

The complex orbifolds $X_{\cK_f}$ are \emph{not} (necessarily) \emph{connected}. They are defined algebraically over the reflex field of $(G,D)$, which can be shown to be $\Q$, in the following sense (see \cite[p.237, ii, b]{VDG} for an analogous situation\footnote{In \cite{VDG} the Shimura theory is formulated using $R_{K/\mathbb{Q}} \GL(2)$ which corresponds to the quotient $\PGL(\Lambda)\backslash (\H^+\cup\H^-)^2$ rather than $\Gamma\backslash(\H^+)^2$. The former is at most a double quotient of the latter.}, and \cite{Rapoport}, \cite{Deligne}, \cite{Deligne2} and Keel--Mori's theorem \cite{Conrad} for a proof in our setting):
\begin{enumerate}
    \item for every compact open $\cK_f \subseteq G(\A_{\Q}^{\fin})$, there is a $\Q$-scheme whose analytification is $X_{\cK_f}$ (the {\em canonical model} of $G(\Q)\backslash G(\A_\Q)/\cK_f\cdot\cK_\infty$) -- we denote this scheme also by $X_{\cK_f}$;
    \item if $\cK_f$ is a normal subgroup of $\cL_f$ then $\cL_{f} / \cK_f$ acts on
    $X_{\cK_f}$ and \[X_{\cK_f} /\left(\cL_f / {\cK}_f\right) \cong X_{\cL_f}.\]
\end{enumerate}

Analytically, the $\cL_{f} / \cK_f$-action of point (ii) is defined via left action on the double quotient $G(\Q)\backslash G(\A_\Q)/\cK_f\cdot\cK_\infty$.

\paragraph{Connected components.}
The connected components of $X_{\cK_f}$ are (this is analogous to \cite[Proposition I.7.2]{VDG}):
\begin{equation}\label{Eq:adelic}
    G(\Q)\backslash G(\A_\Q)/\cK_f\cdot \cK_{\infty}= \bigsqcup_{j=1}^r (G(\Q)_+ \cap \gamma_j\cK_f\gamma_j^{-1}) \backslash (\H^+)^2,
\end{equation}
for any choice of points $\gamma_1,\ldots,\gamma_r \in G(\A_\Q^{\fin})$ such that $(\gamma_1,1),\ldots,(\gamma_r,1) \in G(\A_\Q^{\fin}) \times G(\R)$ are representatives of the connected components $\pi_0(G(\Q)\backslash G(\A_\Q)/\cK_f\cdot\cK_\infty)$. Let $h_T:\BS\to (\G_m)_\R,z\mapsto |z|^2$. The pair $(\G_m,\{\pm 1\}=\{\pm h_T\})$ is a zero-dimensional Shimura datum (in the extended sense of \cite[p. 62-63]{Milne}) and the determinant map induces a morphism $(G,X)\to(\G_m,\{\pm 1\})$, which on the connected components is a bijection of $\Gamma_\Q$-sets
\[
\Q^\times  \backslash\A^{\times}_\Q  / \det(\cK_f)\R_{>0}^\times\simeq \pi_0(G(\Q)\backslash G(\A_\Q)/\cK_f\cdot \cK_{\infty}),
\]
so we could equivalently say that $\det(\gamma) \in \A^{\times}_K$ varies in a set of representatives for $\Q^\times  \backslash\A^{\times}_\Q  / \det(\cK_f)\R_{>0}^\times$.

\paragraph{Galois action on $\pi_0$.} The Galois action of $\Gamma_\Q$ on the components of $X_{\cK_f}$ is given by the Deligne--Serre theorem (see \cite[p.\ 238]{VDG}, or \cite[\S13]{Milne}):
\begin{equation}\label{DS}
    \operatorname{Gal}\left(\mathbb{Q}^{\mathrm{ab}} / \mathbb{Q}\right) \simeq \pi_0\left(\Q^{\times}  \backslash \mathbb{I}_\Q\right) =\pi_0\left(G({\mathbb{Q}})\backslash G(\mathbb{A}_{\mathbb{Q}})\right)
\end{equation}
where the isomorphism is the reciprocity isomorphism of class field theory, and the last identification is induced by the determinant
\[
\det: G({\mathbb{Q}})\backslash G(\mathbb{A}_{\mathbb{Q}}) \simeq \Q^{\times}  \backslash \mathbb{I}_\Q.
\]

\paragraph{Examples.~}Let $\cK_{g} \subseteq G(\A_\Q^\fin)$ be the \emph{maximal} open compact subgroup associated to $\Lambda$, i.e.:
\[
\cK_{g}:=\prod_{v  < \infty} G(\Lambda \otimes_{\mathfrak o} \mathfrak o_v).
\]
Since $G(\Lambda \otimes_{\mathfrak o} \mathfrak o_v) \simeq G(\mathfrak o_v)$ for all $v < \infty$ and $\det G(\mathfrak o_v)=\mathfrak o_v^{\times}$, $\det(\cK_{\mathfrak a})=(\A^{\times}_{\Z})^{\fin}$ and $X_{\cK_{g}}$ is connected \cite{Rapoport}. 
Thus \eqref{Eq:adelic} reduces to:
\[
G(\Q)\backslash G(\A_\Q)/\cK_{\mathfrak a}\cdot \cK_{\infty} \cong (G(\Q)_+ \cap\cK_{\mathfrak a}) \backslash (\H^+)^2 = \PSL(\Lambda) \backslash (\H^+)^2,
\]
giving an identification $X_{\cK_{g}} = X_g.$ We also write $X'_g$, $\tilde X_g$ and $Y_g$ for $X'_{\cK_{\mathfrak a}}$, $\tilde X_{\cK_{\mathfrak a}}$ and $Y_{\cK_{\mathfrak a}}$. 

For each $N \in \Z_{>0}$, we have a surjective modulo $N$ reduction map $\cK_{\mathfrak a} \to G(\Lambda/N)\cong G(\mathfrak o/N)$. We define 
\[
\cK_{g}(N)= \operatorname{Ker} \left( \cK_{\mathfrak a} \to G(\Lambda/N) \right).
\]
Since $\cK(N)$ is normal in $\cK_{\mathfrak a}$, and the representatives $\gamma$ defined above may always be chosen in $\cK_{g}$, \eqref{Eq:adelic} gives an identification of $G(\Q)\backslash G(\A_\Q)/\cK_N\cdot \cK_{\infty}$ with $\phi(N)$ copies of $(G(\Q)_+ \cap \cK_N) \backslash (\H^+)^2=\SL(\Lambda,N) \backslash (\H^+)^2=\PSL(\Lambda,N) \backslash (\H^+)^2.$ 

Summarizing:
\[
X_N:=X_{\cK_N} \to X_{g}
\]
gives an arithmetic realisation of the cover \eqref{Eq:coverC}. However, $X_N$ is not geometrically connected. In fact, $\det(\cK_{\mathfrak a}(N))=\prod_{p\nmid N}\Z_p^\times \times \prod_{p|N}(1+p^{\ord_p(N)}\Z_p)$, hence $\Gamma_\Q$ acts on the set of geometric connected components of $X_N$ via the cyclotomic character $\chi:\Gamma_{\Q} \to (\Z/N\Z)^\times$.

\section{Construction of the $\operatorname{PSL}_2(\F_{p^2})$-cover}\label{Sec:cover}

We fix in this section a genus $g$ of the field $K$, and a module $\Lambda=\mathfrak o \oplus \mathfrak a$ representing it.

Let $N \in \N$ and $\cK_N\subseteq G(\A_{\Q}^{\fin})$ be the open-compact subgroup defining the full level-$N$ cover:
\[
f_N: G(\Q)\backslash G(\A_{\Q}) / \cK_N\cdot\cK_{\infty} \to G(\Q)\backslash G(\A_{\Q}) / \cK_{\mathfrak a}.
\]
This cover is endowed with a natural right $G(\mathfrak o_K/N)(= \cK_{\mathfrak a}/\cK_N)$-action. 

As described in Subsection \ref{SSec:HMSQ}, there exists a $\Q$-surface $X_N$ whose analytification is $G(\Q)\backslash G(\A_\Q) / \cK_N\cdot\cK_\infty$, such that $f_N:X_N \to X_g$ is also defined over $\Q$ and such that the $G(\mathfrak o_K/N)$-action commutes with the $\Gamma_\Q$-action. Moreover, $X_N/G(\mathfrak o_K/N)=X_g$.

The kernel of the $G(\mathfrak o_K/N)$-action is the image of $\cK_{\mathfrak a} \cap Z_{G}(K) = \{\pm 1\}$ in $G(\mathfrak o_K/N)$. Thus $X_N \to X_g$ is a $G_N$-Galois cover with $G_N=G(\mathfrak o_K/N)/\{\pm 1\}$, inducing a representation:
\[
\rho:\Gamma_{\Q(X_g)} \to G_N.
\]

\begin{lemma}\label{Lem:diagram}
    We have a commutative diagram with exact rows:
    \[
    \begin{tikzcd}
        1 \arrow[r] & \Gamma_{\C(X_g)}  \arrow[r] \arrow[d, "\rho_{\C}"] & \Gamma_{\Q(X_g)}  \arrow[r] \arrow[d, "\rho"]       & \Gamma_{\Q} \arrow[r] \arrow[d, "\chi"]         & 1 \\
        1 \arrow[r] & \SL_2(\mathfrak o_K/N)/\{\pm 1\} \arrow[r]                  & G(\mathfrak o_K/N)/\{\pm 1\} \arrow[r, "\det"] & (\Z/N\Z)^\times \arrow[r] & 1
    \end{tikzcd},
    \]
    where $\chi:\Gamma_{\Q} \to (\Z/N\Z)^\times$ is the cyclotomic character and $\rho_{\C}$ is surjective.
\end{lemma}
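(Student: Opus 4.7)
The plan is to verify the diagram piece by piece: first the exactness of the two rows, then the commutativity of the squares, and finally the surjectivity of $\rho_{\C}$. The bottom row comes from the short exact sequence of algebraic $\Q$-groups $1 \to R_{K/\Q}\SL_2 \to G \xrightarrow{\det} \mathbb{G}_m \to 1$: reducing modulo $N$ yields a surjection $G(\mathfrak o_K/N) \twoheadrightarrow (\Z/N\Z)^\times$ with kernel $\SL_2(\mathfrak o_K/N)$, and since $-\operatorname{id}$ already lies in $\SL_2(\mathfrak o_K/N)$, dividing by $\{\pm 1\}$ preserves exactness. The top row is the standard short exact sequence of absolute Galois groups attached to the geometrically integral $\Q$-variety $X_g=X_{\cK_g}$ (geometric integrality being part of its construction as a connected canonical Shimura variety over its reflex field $\Q$ recalled in Section~\ref{SSec:HMSQ}).

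Next, I would take $\rho$ to be the Galois representation attached to the $G_N$-Galois cover $X_N \to X_g$, acting on the fiber over a geometric generic point, and define $\rho_{\C}$ as its restriction to $\Gamma_{\C(X_g)}$. To establish the right square, I would chase through the adelic description in Section~\ref{SSec:HMSQ}: for $\sigma \in \Gamma_{\Q(X_g)}$ with image $\bar\sigma \in \Gamma_{\Q}$, the class $\det \rho(\sigma) \in (\Z/N\Z)^\times$ records how $\bar\sigma$ permutes the geometric components of $X_{N,\C}$, and by the Deligne--Serre identification \eqref{DS} together with the explicit computation $\det(\cK_g(N)) = \prod_{p \nmid N}\Z_p^\times \times \prod_{p \mid N}(1+p^{\ord_p(N)}\Z_p)$ at the end of that section this action equals $\chi(\bar\sigma)$. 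This commutativity immediately forces $\rho(\Gamma_{\C(X_g)}) \subseteq \ker \det = \SL_2(\mathfrak o_K/N)/\{\pm 1\}$, so $\rho_{\C}$ is well-defined and the left square commutes.

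Finally, for surjectivity of $\rho_{\C}$, each geometric connected component of $X_{N,\C}$ is analytically $\PSL(\Lambda,N)\backslash(\H^+)^2$, a connected cover of $X_{g,\C}=\PSL(\Lambda)\backslash(\H^+)^2$ with deck group $\PSL(\Lambda)/\PSL(\Lambda,N)$. By strong approximation for $\SL_{2,K}$ (already invoked in Section~\ref{SSec:HMSQ}), this quotient equals $\PSL_2(\mathfrak o_K/N)=\SL_2(\mathfrak o_K/N)/\{\pm 1\}$, which is precisely the stabilizer of one component inside $G_N$; hence the monodromy of $X_N \to X_g$ along that component surjects onto $\SL_2(\mathfrak o_K/N)/\{\pm 1\}$. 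The step I expect to be most delicate is the right-square commutativity, since it requires carefully tracking $\det\rho(\sigma)$ through the reciprocity isomorphism \eqref{DS} and the $\Gamma_\Q$-action on $\pi_0(X_{N,\C})$; everything else follows formally from the Shimura-theoretic material of Section~\ref{SSec:HMSQ}.
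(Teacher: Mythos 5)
Your proposal is correct and follows essentially the same route as the paper: the geometric part (image of $\Gamma_{\C(X_g)}$, surjectivity of $\rho_\C$) comes from the analytic splitting of $X_{N,\C}$ into $\phi(N)$ copies of $\PSL(\Lambda,N)\backslash(\H^+)^2$ together with strong approximation, and the right square comes from identifying the $\Gamma_\Q$-action on $\pi_0(X_{N,\C})$ with the cyclotomic character via the Deligne--Serre reciprocity and the computation of $\det(\cK_g(N))$. The only difference is the order of the two squares (the paper establishes the image of $\rho_\C$ first and deduces the component action, while you do the reverse), which is immaterial.
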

\begin{proof}
    The exactness of the first row follows from Galois theory.

    Recall that the cover $X_{N}(\C) \to X_g(\C)$ splits into $\phi(N)$ copies of the analytic cover
    \[
    \PSL(\Lambda,N) \backslash (\H^+)^2 \to \PSL(\Lambda) \backslash (\H^+)^2, 
    \]
    whose Galois group is $\PSL(\Lambda)/\PSL(\Lambda,N) \cong \SL_2(\mathfrak o_K/N)/\{\pm 1\}.$ Since the $\SL_2(\mathfrak o_K/N)$-action here is compatible with the $G(\mathfrak o_K/N)$-action on $X_{N}$, this shows that $\rho(\Gamma_{\C(X_g)})=\SL_2(\mathfrak o_K/N)/\{\pm 1\}.$ We define $\rho_{\C} = \rho|_{\Gamma_{\C(X_\Gamma)}}$.

    The just proven identity implies that there is an induced quotient action by $(\Z/N\Z)^\times\cong G(\mathfrak o_K/N)/\SL_2(\mathfrak o_K/N)$ on $\pi_0(X_{N})$. By Deligne--Shimura, this action coincides with the Galois action of $\Gamma_{\Q}$, proving the commutativity of the last square.
\end{proof}

Take now $N=p$ an inert prime in $K$. Projectivising $\rho$ by quotienting by scalar matrices, we get:
\begin{proof}[Proof of Theorem~\ref{Thm:cover}]
    Consider the composition:
    \[
    \rho':\Gamma_{\Q(X_\Gamma)} \xrightarrow{\rho} G(\mathfrak o_K/N)/\{\pm 1\} \to \PGL_2(\mathfrak o_K/N)=\PGL_2(\F_{p^2}).
    \]
    By Lemma \ref{Lem:diagram}, the determinant $\det \rho'$ is the image of the cyclotomic $p$-character under $\F_p^\times \to \F_{p^2}^\times/\F_{p^2}^{\times2}$. Since all elements of $\F_p^\times$ are squares in $\F_{p^2}$, this determinant is trivial, and thus $\rho'$ takes values in $\PSL_2(\F_{p^2})=\SL_2(\F_{p^2})/\{\pm 1\}$.
    Let $Y_N$ be the cover defined by the kernel of $\rho'$. Since the restriction $\rho_{\C}$ of $\rho'$ on $\Gamma_{\C(X_g)}$ surjects onto $\PSL_2(\F_{p^2})$, $(Y_N)_{\C}$ is geometrically integral and we are done.
\end{proof}

\begin{remark}\label{Rem:shih}
Theorem~\ref{Thm:cover} should be compared to the analogous cover $X_0(p)\to X(1)$ of modular curves corresponding to the level structure $\Gamma_0(p)$.

In the one-dimensional setting, $X_0(p)$ has field of definition $\Q(\zeta_p)$ and one only obtains a $\PGL_2(\F_p)$-cover. Under certain congruence conditions, Atkin-Lehner involutions can be used to turn this into a $\PSL_2(\F_p)$-cover by what has become known as Shih's trick \cite{Shih}. In our situation such a trick is unnecessary.
\end{remark}

\section{Arithmetic of algebraic cycles on Hilbert modular surfaces}\label{Sec:cycles}
In order to prove the Hilbert Property for a Hilbert modular surface $X_g$, it is crucial that we work on the minimal smooth, projective model $Y_g$ and have a sufficiently precise understanding of its algebraic cycles and their intersection behaviour, not only geometrically but over the base field $\Q$. 

There are three main sources of algebraic cycles: (i) resolution components of the cusp singularities; (ii) resolution components of the elliptic singularities; and (iii) Hirzebruch--Zagier divisors. We deal with each of these in turn. 

\paragraph{Cusp divisors.~}\label{SSec:cusps}
We follow \cite[Ch.~I-II, X.2]{VDG}. The boundary $X_g^\infty=X'_g\setminus X_g$ of the Baily--Borel compactification is a zero-dimensional $\Q$-scheme, which geometrically decomposes into $\#\Cl(K)$ points:
\[X_g^\infty(\ov\Q)=\bigsqcup_{[\mathfrak c]\in\Cl(K)}s_{[\mathfrak c]}\]

The classes in $\Cl(K)$ are in bijection with $\PSL(\Lambda)$-orbits of points in $\P^1(K)$ via the map sending $(x_0:x_1)\in\P^1(K)$ to the class $[x_0\cdot \mathfrak o + x_1\cdot \mathfrak a]\in\Cl(K)$. Fix a class $[\mathfrak c]\in\Cl(K)$ and write shorthand $s=s_{[\mathfrak c]}$. To $s$ we associate $M_s=\mathfrak c^{-2}\mathfrak a^{-1}$ viewed as an \emph{ordered} $\mathfrak o$-module, i.e.\ equipped with an order on $M_s\otimes_\sigma \R$ for each embedding $\sigma:K\hookrightarrow\R$. The isomorphism class of $M_s$ does not depend on the chosen representative $\mathfrak c$. %

\begin{proposition}\label{Prop:CoverQ}\hfill
 \begin{enumerate}
  \item The action of $\Gamma_\Q$ on $X_g^\infty(\ov\Q)$ is trivial, in other words all cusps are defined over $\Q$.
  \item The exceptional divisor $E_s$ of the minimal resolution $\tilde X_g\to X_g$ over any cusp $s$ is a cycle of smooth rational curves intersecting transversally.
 \end{enumerate}
\end{proposition}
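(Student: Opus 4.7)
The plan is to handle (i) through the Shimura reciprocity law for the canonical model of the Baily--Borel compactification, and (ii) through Hirzebruch's classical resolution of cusp singularities.

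For part (i), the boundary $X_g^\infty$ is a zero-dimensional $\Q$-subscheme of the canonical model $X_g'$, so $\Gamma_\Q$ acts on $X_g^\infty(\ov\Q)$. I would apply the Deligne--Serre reciprocity recalled in \eqref{DS} of the paper to the zero-dimensional boundary strata: the action factors through the reciprocity isomorphism and is determined by the image of $\det$ on $\cK_{\mathfrak a}$. Since $\det(\cK_{\mathfrak a})=(\A_\Z^{\fin})^\times$ (as established in the examples paragraph of \S\ref{SSec:HMSQ}), the corresponding double quotient $\Q^\times\backslash\A_\Q^\times/\det(\cK_{\mathfrak a})\R_{>0}^\times$ is trivial, and so each cusp $s_{[\mathfrak c]}$ is individually $\Gamma_\Q$-fixed, i.e.\ defined over $\Q$.

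For part (ii), I would invoke Hirzebruch's explicit resolution of Hilbert modular cusp singularities as described in \cite[Ch.~II]{VDG}. Analytically, the germ of $X_g'$ at $s$ is of the form $((\H^+)^2 \cup \{s\})/(M_s \rtimes V_s^+)$, where $V_s^+\subset \mathfrak{o}_+^\times$ is a finite-index subgroup of totally positive units acting on the ordered $\mathfrak o$-module $M_s=\mathfrak c^{-2}\mathfrak a^{-1}$ by multiplication. The minimal resolution is then read off combinatorially from the Hirzebruch--Jung continued fraction expansion of a totally positive $\Z$-basis of $M_s$ adapted to the ordering: one obtains a cyclic sequence of smooth rational curves $E_1,\ldots,E_r$ with self-intersections $E_i^2=-b_i$, $b_i\geq 2$, each $E_i$ meeting $E_{i+1}$ (indices mod $r$) transversally at a single point, with no other intersections. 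The fact that one obtains a \emph{cycle} rather than an open chain is precisely the $V_s^+$-periodicity of the continued fraction, which identifies the two ends of the Hirzebruch--Jung fan.

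The main obstacle is really in part (i): confirming that the Deligne--Serre reciprocity, usually formulated for the connected components of the open Shimura variety, extends to the zero-dimensional boundary of the Baily--Borel compactification. This amounts to checking that the cusp stratification is $\Q$-rational and that the canonical model construction is compatible with passage to the boundary, so that the determinant/reciprocity description of the Galois action remains valid there. In part (ii) the small cases $r=1$ (a single nodal rational curve) and $r=2$ (two smooth rational curves meeting transversally at two points) should be treated explicitly so that ``cycle of smooth rational curves intersecting transversally'' is unambiguous.
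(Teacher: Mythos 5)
Your overall strategy (reciprocity for the Galois action on the boundary, toroidal/Hirzebruch--Jung resolution for the cusp geometry) matches the paper's, but both halves have a gap. In (i), the quotient you compute, $\Q^\times\backslash\A_\Q^\times/\det(\cK_{\mathfrak a})\R_{>0}^\times$, is the set of geometric connected components of $X_g$, not the set of cusps: the cusps are indexed by $\Cl(K)$, which has $\#\Cl(K)>1$ elements in general, so the inference ``this double quotient is trivial, hence each cusp is individually fixed'' is not valid as stated. Triviality of the component group cannot by itself force boundary or special points to be $\Gamma_\Q$-fixed --- the elliptic points of the same geometrically connected $X_g$ are in general \emph{not} defined over $\Q$, as the paper remarks. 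What is needed, and what you correctly flag as the main obstacle but do not supply, is the identification of the $\Gamma_\Q$-action on $X_g^\infty(\ov\Q)$ as the reciprocity action of $\pi_0(\Q^\times\backslash\mathbb{I}_\Q)$ on $\Cl(K)=K^\times\backslash\mathbb{I}_K/\prod_v\mathfrak o_v^\times\cdot(\R^\times)^2$; the paper takes this from \cite[\S1.3]{HLR} (cf.\ \cite[\S12]{Pink}) and then observes that this specific action factors through $\Cl(\Q)=1$. That last step, not the connectedness of $X_g$, is what kills the action.

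In (ii), the Hirzebruch--Jung description gives the correct picture over $\C$, but the proposition (with the paper's definition of ``cycle of smooth rational curves intersecting transversally'') asserts that the components are isomorphic to $\P^1_\Q$, i.e.\ that the resolution cycle carries a $\Q$-structure component by component. Your argument is purely analytic and does not address descent. The paper obtains the $\Q$-structure by realising the formal neighbourhood of $E_s$ as the quotient by $\mathfrak o^{\times2}$ of the completion along the boundary of a toric scheme $T(\Sigma)$ defined over the field of definition of $s$ --- which is $\Q$ precisely by part (i) --- so that the exceptional components inherit rationality from the smooth affine toric charts $\A^2_\Q$. Your remark about the degenerate cases $r=1,2$ is fair but minor; the two substantive missing steps are the ones above.
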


By a cycle of smooth rational curves intersecting transversally we mean a connected curve whose irreducible components are isomorphic to $\P^1_\Q$, where each of them intersects exactly two other components, and it does so transversally. We call such a component in $\tilde X_g$ a \emph{cusp divisor}.

\begin{proof}
 \begin{enumerate}
  \item By \cite[\S1.3]{HLR} (cf. \cite[\S12]{Pink} for a more general treatment), the action of $\Gamma_\Q$ on $X_g^\infty(\ov\Q)$ factors through the natural action of $\operatorname{Gal}\left(\mathbb{Q}^{\mathrm{ab}} / \mathbb{Q}\right) \simeq \pi_0\left(\Q^{\times}  \backslash \mathbb{I}_\Q\right)$ on $\Cl(K)=K^{\times}  \backslash \mathbb{I}_K/\prod_v \mathfrak o_v^\times\cdot (\R^\times)^2$. This action factors through $\Cl(\Q)=1$ and is hence trivial.
  \item This essentially follows from the contents of \cite[\S1.6]{HLR} (see also \cite[Ch.~X]{VDG}), but we include the details for completeness. The set $(M_s\otimes \R)_+$ of totally positive elements in $(M_s\otimes \R)$ has a canonical decomposition as a smooth polyhedral fan $\Sigma$ given by the boundary points of the convex hull of totally positive elements $(M_s)_+\subset M_s$. (The fan $\Sigma$ can be computed via Hirzebruch--Jung continued fractions as explained in \cite[II.5]{VDG}.) The action of $\mathfrak o^{\times2}$ on $M$ induces a free action on the fan $\Sigma$.

    \vskip2mm

  Let $T=T(\Sigma)$ be the associated (infinite) toric scheme over $\Q$. This is a formally smooth scheme locally of finite type over the field of definition of $s$, which by (1) equals $\Q$. Let $T_{/\infty}$ be the completion of $T$ along its boundary $T^{\infty}$. Then by \cite[\S1.6]{HLR}, the completion of $\tilde X_g$ along $E_s$ is isomorphic to the quotient of $T_{/\infty}$ by ${\mathfrak o^{\times2}}$. Since $T(\Sigma)$ is glued from smooth affine toric varieties, each isomorphic to $\A^2_\Q$, this proves the claim. \qedhere
 \end{enumerate}
\end{proof}

\begin{minipage}{0.55\textwidth}
    \begin{tikzpicture}[scale=0.8]
        \draw[thick] (-2,0) -- (5,0);
        \draw[thick] (0,-2) -- (0,5);

        \filldraw (-1.0000, -1.0000) circle (2pt);
        \filldraw (0.61800, -1.6180) circle (2pt);
        \filldraw (-1.6180, 0.61800) circle (2pt);
        \filldraw (0.00000, 0.00000) circle (2pt);  %
        \filldraw (1.6180, -0.61800) circle (2pt);
        \filldraw (3.2360, -1.2360) circle (2pt);
        \filldraw (-0.61800, 1.6180) circle (2pt);
        \filldraw (1.0000, 1.0000) circle (2pt);    %
        \filldraw (2.6180, 0.38200) circle (2pt);   %
        \filldraw (4.2360, -0.23600) circle (2pt);
        \filldraw (-1.2360, 3.2360) circle (2pt);
        \filldraw (0.38200, 2.6180) circle (2pt);   %
        \filldraw (2.0000, 2.0000) circle (2pt);
        \filldraw (3.6180, 1.3820) circle (2pt);
        \filldraw (-0.23600, 4.2360) circle (2pt); 
        \filldraw (1.3820, 3.6180) circle (2pt);
        \filldraw (3.0000, 3.0000) circle (2pt);
        \filldraw (4.6180, 2.3820) circle (2pt);
        \filldraw (2.3820, 4.6180) circle (2pt);
        \filldraw (4.0000, 4.0000) circle (2pt);   

        \draw[thick] (5,0.331) -- (2.6180, 0.38200) -- (1.0000, 1.0000) -- (0.38200, 2.6180) -- (0.331,5);
        \draw[pattern={Lines[angle=45]}] (5, 0.38200*5/2.6180) -- (0,0) -- (5,5); 
        \draw (3,1.8) node[below]{$\sigma_1$};
        \draw[pattern={Lines[angle=-45]}] (5,5) -- (0,0) -- (0.38200*5/2.6180, 5); 
        \draw (1.8,3) node[left]{$\sigma_2$};

        \end{tikzpicture}
\end{minipage}%
\begin{minipage}{0.45\textwidth}
    \raggedleft
  
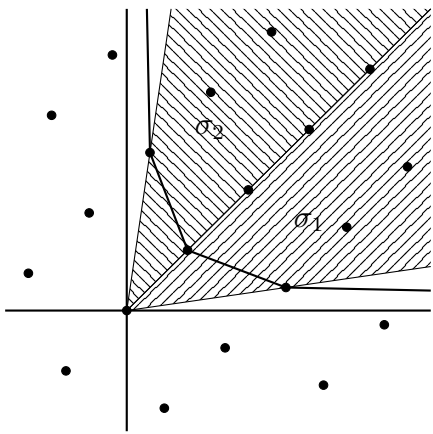
\captionof{figure}{The convex hull of $(M_s\otimes \R)_+$ and two of the (infinitely many) maximal cones of the fan $\Sigma$ highlighted, for $K=\Q(\sqrt 5)$, $s= $ principal cusp.}\label{fig:enter-label}
\end{minipage}

\vskip4mm

\paragraph{Elliptic divisors.~}
A point on a Hilbert modular surface $X_g(\Gamma)$ is said to be {\em elliptic} if its isotropy group in $\Gamma$ is non-trivial. We say that an elliptic point is of {\em type} $r$ if its isotropy group has order $r$. Assuming that $D \neq 5,8,12$, only points of type $2$ and $3$ appear. 
We further subdivide points of order $3$: we say that an elliptic point is of type $3^+$ if its rotation factor is $(\zeta_3,\zeta_3)$, and $3^-$ if its rotation factor is $(\zeta_3,\zeta_3^{-1})$.

\begin{proposition}\label{Prop:elliptic} Let $x\in X_g(\ov \Q)$ be a singularity of type $2$ or $3^\pm$. The minimal resolution of $x$ over $\ov\Q$ has the following exceptional divisors:
\begin{enumerate}
    \item type $2$: $\P^1_{\ov \Q}$ with self-intersection $-2$.
    \item type $3^+$: $\P^1_{\ov \Q}$ with self-intersection $-3$.
    \item type $3^-$: $\P^1_{\ov \Q}\cup \P^1_{\ov \Q}$, each component with self-intersection $-2$, meeting transversally in one point.
\end{enumerate}
\end{proposition}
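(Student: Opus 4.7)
The plan is to identify each elliptic singularity with a standard Hirzebruch--Jung cyclic quotient $\tfrac{1}{r}(a,b)$ and invoke the classical explicit recipe for its minimal resolution.

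First I would verify that, formally locally at an elliptic point $x\in X_g(\ov\Q)$ of type $r$ with rotation factor $(\zeta_r^a,\zeta_r^b)$, the complete local ring $\widehat{\cO}_{X_g,x}$ is isomorphic to $\ov\Q[[u,v]]^{\mu_r}$, where $\mu_r$ acts diagonally with weights $(a,b)$. Analytically this is standard: by \cite{VDG}, a neighbourhood of $x$ in $X_g(\C)$ is the quotient of a neighbourhood of a preimage in $(\H^+)^2$ by the isotropy group $\Gamma_x$, and this action is linearisable at its fixed point by the classical averaging argument of H.~Cartan. One then transports the identification to the canonical $\Q$-model by comparing algebraic and analytic formal completions at the closed point $x$, which agree because cyclic quotient singularities are algebraic.

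Next I would invoke the Hirzebruch--Jung resolution of $\tfrac{1}{r}(1,q)$: the minimal resolution is a chain of smooth rational curves with self-intersections $-b_1,\ldots,-b_s$, meeting transversally in single points, where $r/q=b_1-1/(b_2-1/(\cdots-1/b_s))$ is the Hirzebruch--Jung continued fraction expansion of $r/q$ (see e.g.\ \cite[\S II.5]{VDG}). Explicit computation settles the three cases: for type $2$, $\tfrac{1}{2}(1,1)$ gives $2/1=[2]$, hence a single $(-2)$-curve; for type $3^+$, $\tfrac{1}{3}(1,1)$ gives $3/1=[3]$, hence a single $(-3)$-curve; and for type $3^-$, $\tfrac{1}{3}(1,2)$ gives $3/2=[2,2]$ (since $3/2=2-1/2$), hence two $(-2)$-curves meeting transversally at one point. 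Since we are working over $\ov\Q$, each resolution component is automatically $\P^1_{\ov\Q}$.

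The main obstacle is matching the local data in the first step: one must check that an order-$2$ elliptic point has rotation factor $(-1,-1)$, so that its weights are $(1,1)\bmod 2$, and that the dichotomy $3^\pm$ translates to weights $(1,1)$ versus $(1,2)$ modulo $3$. The former is forced because an element of $\PSL_2(\R)$ of order exactly $2$ fixing a point of $\H^+$ is unique (namely the class of a rotation by $\pi/2$ in $\SL_2$), and it acts on the tangent space at its fixed point by $e^{i\pi}=-1$; the latter is essentially a definition, since the types $3^\pm$ record precisely the pair of eigenvalues $(\zeta_3,\zeta_3)$ or $(\zeta_3,\zeta_3^{-1})$ of the derivative at the fixed point. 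Once the weights are pinned down, everything else is a mechanical Hirzebruch--Jung computation.
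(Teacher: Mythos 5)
Your proof is correct. For cases (i) and (ii) it is essentially what the paper does, since the paper simply cites the standard theory of cyclic quotient singularities in \cite[Sec.~2.6]{VDG} and the table on p.~65 of \cite{VDG}, which is exactly the Hirzebruch--Jung resolution you invoke; your preliminary reductions (linearisation of the isotropy action \`a la Cartan, algebraisation of the quotient singularity, and the verification that an order-$2$ point必 has rotation factor $(-1,-1)$ while the $3^{\pm}$ dichotomy is by definition the pair of eigenvalues) are all sound and fill in the implicit steps. The genuine difference is in case (iii): where you identify the singularity as $\tfrac{1}{3}(1,2)$, i.e.\ the $A_2$ rational double point, and read off the chain $[2,2]$ from the continued fraction $3/2=2-1/2$, the paper instead performs an explicit construction --- blow up the fixed point $\tau$, blow up the two fixed points $\tilde\tau_1,\tilde\tau_2$ of the lifted action on the exceptional curve, quotient by the cyclic group, compute the intersection matrix of the images $E_0',E_1',E_2'$ via the projection formula, and finally contract the resulting $(-1)$-curve $E_0'$. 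Your route is shorter and more standard; the paper's longer computation is not wasted effort, however, because the intermediate model $\Bl_{\tilde\tau_1,\tilde\tau_2,\tau}U$ and the divisors $E_0',E_1',E_2'$ are reused verbatim in the proofs of Proposition~\ref{Prop:HZEllmult} and Lemma~\ref{Lem:HZcyc} to determine how Hirzebruch--Zagier branches meet the two exceptional components of a $3^-$ point, which the bare Hirzebruch--Jung output does not provide.

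One small caveat: in your last paragraph the parenthetical ``rotation by $\pi/2$'' is the angle of the representing matrix in $\SL_2(\R)$, whose derivative at the fixed point $i$ is $e^{-2i\cdot\pi/2}=-1$; the essential (and correct) point is just that $-1$ is the unique primitive square root of unity, so no order-$2$ point can be a quasi-reflection and the weights are forced to be $(1,1)\bmod 2$.
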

\begin{proof}
    This is well-known, and we only recall the theory in the case of type $3^-$ here, as it will be used later. For a general treatment on the desingularisation of all quotient singularities on surfaces, see \cite[Sec.\ 2.6]{VDG}. For a more specific treatment of only the types of singularities that appear in this proposition, see \cite[Table, p.\ 65]{VDG}.

     We may choose local analytic coordinates $w_1,w_2$ near a lift $\tau\in(\H^+)^2$ of $x$ such that $\gamma$ acts as $(w_1,w_2) \mapsto (\zeta_3 w_1, \zeta_3^{-1}w_2)$. Let $U$ be a local $\gamma$-invariant neighbourhood of $\tau$. In the next paragraph we describe a neighbourhood $V$ of the exceptional divisor above $x$ in $\tilde X_g$.

We blow up $U$ three times, as follows. Start from the blowup $\Bl_{\tau}U$, where the (lift of) the action by $\gamma$ fixes only two points $\tilde \tau_1$ and $\tilde \tau_2$, both lying on the exceptional divisor $E_{\tau} \subset \Bl_{\tau}U$, and corresponding to the tangential directions $w_1=0$ and $w_2=0$. Consider the blowup $\Bl_{\tilde \tau_1,\tilde \tau_2,\tau}U$ of $\Bl_{\tau}U$ at these two points. The exceptional divisor $E_{\tau}+E_{\tilde \tau_1}+E_{\tilde \tau_2}$ of these triple blowup has three components with intersection table $E_{\tau}^2=-3, E_{\tilde \tau_1}\cdot E_{\tilde \tau_2}=0, E_{\tilde \tau_i}^2=-1, E_{\tilde \tau_i}\cdot E_{\tau}= 1, i=1,2$. The (lift of) the action by $\gamma$ fixes pointwise the two divisors $E_{\tilde \tau_i},i=1,2$, and preserves the divisor $E_{\tau}$, on which however it acts freely away from the two points $E_{\tau} \cap E_{\tilde \tau_i},i=1,2$. Then the quotient of $\Bl_{\tilde \tau_1,\tilde \tau_2,\tau}U$ by this action is a smooth analytic variety $V'$. We denote by $\pi:\Bl_{\tilde \tau_1,\tilde \tau_2,\tau}U \to V'$ the natural finite map. Let $E'_0, E'_1$ and $E'_2$ denote the divisors $\pi(E_{\tau}), \pi(E_{\tilde \tau_1})$ and $\pi(E_{\tilde \tau_2})$, respectively. We have $\pi^{-1}(E'_0)=E_\tau, \pi^{-1}(E'_i)=3E_{\tilde \tau_i}, i=1,2$, and we may use this to infer the intersection table of $E'_0, E'_1$ and $E'_2$ from that of the exceptional divisors in $\Bl_{\tilde \tau_1,\tilde \tau_2,\tau}U$. Explicitly, the matrix $(E'_i.E'_j)=\frac13(\pi^{-1}(E'_i).\pi^{-1}(E'_j))$ is equal to $\begin{pmatrix}
        -1 & 1 & 1\\
        1 & -3 & 0\\
        1 & 0 & -3
    \end{pmatrix}$ as shown in Figure~\ref{fig:3minus}.
\vskip4mm    
\begin{minipage}{0.55\textwidth}
    \begin{tikzpicture}[scale=.8]
  \draw (0,1) -- node[below] {$E_0'^2=-1$} (4,1)
        (.5,0) -- node[left] {$E_1'^2=-3$}(.5,3.5)
        (3.5,0) -- node[right] {$E_2'^2=-3$}(3.5,3.5);
\end{tikzpicture}
\end{minipage}%
\begin{minipage}{0.45\textwidth}
    \raggedleft
  \captionof{figure}{Exceptional divisors on the blowup $\Bl_{\tilde \tau_1,\tilde \tau_2,\tau}U$ of a $3^-$ elliptic point.}\label{fig:3minus}
\end{minipage}    
\vskip4mm
Since $(E'_0)^2=-1$ we may blow this divisor down and we denote the blow-down by $V$. On $V$ we only have two exceptional components $E_1$ and $E_2$ (the images of $E'_1$ and $E'_2$), with $E_1^2=E_2^2=-1$ and $(E_1.E_2)=1$.
\end{proof}

We call an exceptional divisor from Proposition~\ref{Prop:elliptic} on the minimal resolution $\tilde X_g$ of $X_g$ an \emph{elliptic divisor}.  

\vskip1mm

The following remark will not be used and only serves as clarification.

\begin{remark}
In the Shimura language, the elliptic points arise from  totally imaginary quadratic extensions $L=K(\alpha)$ of $K$ with an embedding of $K$-algebras $\iota:L \hookrightarrow M_2(K)$. 

The embedding $\iota$ determines an embedding $\iota:R_{L/\mathbb{Q}}\G_m\hookrightarrow R_{K/\mathbb{Q}}\GL_2$. Let $T_L$ be the torus defined as the product $R_{L/\mathbb{Q}} \G_m \times_{R_{K/\mathbb{Q}}\G_m} \G_m$, where the maps $R_{L/\mathbb{Q}} \to R_{K/\mathbb{Q}}\G_m$ and $\G_m \to R_{K/\mathbb{Q}}\G_m$ are the norm map $N_{L/K}$ and the natural embedding, respectively. The restriction of $\iota$ to $T_L$ determines an embedding $i:T_L \hookrightarrow G$.

To each CM-type of $L$, i.e.\ a subset $\Phi=\{\phi_1,\phi_2\}\subset\Hom(L,\ov\Q)$ with $\Hom(L,\ov\Q)=\Phi\sqcup\ov\Phi$, we may associate an embedding $h_\Phi:\BS\to T_L(\R)\cong_{\Phi} \{(z_1,z_2) \in \C^\times \times\C^\times: |z_1|^2=|z_2|^2\}, z\mapsto (z,z)$, where the isomorphism is via $\phi_1$ on the first factor and $\phi_2$ on the second. There always exists a CM-type of $L$ which extends $i$ to an embedding of Shimura data $(T_L,h_\Phi)\hookrightarrow(G,h)$. All elliptic points on $X_g$ arise as geometric connected components of such an embedding.

We note that in general the Galois action on the connected components is nontrivial and so the elliptic points are not defined over $\Q$ (unlike what one may understand from \cite[(1.5)]{Langer}, cf.\ \cite[XI.2.(1)]{VDG}).
\end{remark}

\paragraph{Hirzebruch--Zagier divisors.~}
For a matrix $B\in M_2(K)$, we denote its entrywise conjugate by $B'$. In the following, assume that $B$ is
\begin{enumerate}
    \item \emph{skew-hermitian}: $B'^t=-B$,
    \item \emph{integral for the genus $g$}: $B=\begin{pmatrix}
        a_1\sqrt{D}& \lambda\\
        -\lambda' & {a_2\sqrt{D}}/{A}
    \end{pmatrix}$
    for $a_1,a_2\in\Z$, $\lambda\in\mathfrak a^{-1}$,
    \item \emph{primitive}: $a_1,a_2,\lambda$ are not divisible by any same integer $>0$.
\end{enumerate} 

Define \[
    \H_B^{\pm} := \{(z_1,z_2) \in (\H^{\pm})^2 : (z_2,1)B(z_1,1)^t=0\},
    \]
    and $\H_B:= \H_B^+$. Let $F_B$ be the image of $\H_B$ in $X_g$, and $F_N:=\cup_B F_B$ where $B$ runs over all (skew-hermitian, integral, primitive) matrices with $\det(B)A=N$.

\begin{remark}\label{Rem:quaternionorder}
 In \cite[V.1.5(ii)]{VDG} a description of the level structure on $F_B$ is given. This description appears to not fully capture the level structure without additional assumptions on $B$. For example for $D=24$, $\Lambda=\mathfrak o\oplus\mathfrak o$, and $B={ \begin{pmatrix} 5\sqrt{D} & 57 \\ -57 & -27\sqrt{D}) \end{pmatrix}}$ the element $(57/\sqrt{D}+j)/5$ in the notation of \cite{VDG} does not lie in the correct quaternion order.
\end{remark}

\begin{theorem}\label{Thm:HZoverQ}
    For each $N \in \mathbb Z_{>0}$, the (possibly reducible) curve $F_N $ is defined over $\mathbb{Q}$.
\end{theorem}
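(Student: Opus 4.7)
The plan is to exhibit $F_N$ as the vanishing locus of a moduli-theoretic condition that is intrinsic to the parametrised abelian surface, and therefore manifestly $\Gamma_\Q$-equivariant.

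The Hilbert modular surface $X_g$ is the coarse moduli space (defined over $\Q$ via its canonical model, see Section \ref{Sec:shimura}) of polarised abelian surfaces $(A,\iota,\lambda)$ with real multiplication $\iota\colon\mathfrak o\hookrightarrow\operatorname{End}(A)$ of polarisation type determined by $g$. The Galois action of $\Gamma_\Q$ on $X_g(\bar\Q)$ coincides with the natural conjugation action on the isomorphism classes of such triples. The point of the proof is that $F_N$ arises from an intrinsic condition on this moduli datum.

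Following \cite[Ch.~V]{VDG}, I would recall that a geometric point $[(A,\iota,\lambda)] \in X_g(\bar\Q)$ lies on $F_B$ precisely when there exists an endomorphism $\rho \in \operatorname{End}(A)$ that is Rosati-skew, that intertwines $\iota$ with its Galois conjugate (i.e.\ $\rho \iota(a) = \iota(a')\rho$ for all $a \in \mathfrak o$), and whose associated skew-Hermitian form on $H_1(A,\Z)$ in a chosen symplectic $\mathfrak o$-basis is the matrix $B$. Unioning over the primitive integral $B$'s with $\det(B) A = N$ eliminates the basis dependence and yields
\[
F_N(\bar\Q) = \bigl\{[(A,\iota,\lambda)] : \exists\, \rho \in \operatorname{End}(A) \text{ satisfying the above with reduced-norm invariant } N\bigr\},
\]
an expression intrinsic to the triple $(A,\iota,\lambda)$. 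Since Rosati-skewness, the $\iota$-intertwining relation, and the reduced norm of $\rho$ are all preserved under conjugation by an element $\sigma \in \Gamma_\Q$ (sending $\rho$ to the corresponding endomorphism $\rho^\sigma$ of $A^\sigma$), the set $F_N(\bar\Q)$ is $\Gamma_\Q$-stable inside $X_g(\bar\Q)$. Being closed in the $\Q$-variety $X_g$, the subscheme $F_N$ therefore descends to $\Q$.

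The main obstacle is making this dictionary precise in a way uniform across genera, especially in the non-principal case, and in view of the subtleties with quaternion orders flagged in Remark \ref{Rem:quaternionorder}. These subtleties concern the exact order structure on individual $F_B$ and may reshuffle the decomposition of $F_N$ into irreducible geometric components, but they do not affect the total locus: the only invariant needed for the argument is the single quantity $\det(B) A = N$, which on the moduli side corresponds to the reduced norm of $\rho$ and is manifestly Galois-invariant. Thus the passage from the matrix description to the intrinsic moduli condition goes through, and $F_N$ is defined over $\Q$.
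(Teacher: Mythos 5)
Your route is genuinely different from the paper's. The paper realises each $\H_B$ as the symmetric domain of an embedded quaternionic Shimura subdatum $(\mathbf Q_B,\H_B^+\cup\H_B^-)$ with reflex field $\Q$, deduces that the image of the corresponding sub-Shimura variety is a $\Q$-subvariety of $X_g$, and then performs an explicit adelic computation showing that this image --- a priori a union of translates $[q\times\H_B]$ over $q\in Q_B(\A_\Q^\fin)$ --- lands inside $F_N$, because each translate is some $\H_{B''}$ with $B''$ again primitive, integral and of the same determinant. Your proposal instead invokes the PEL moduli interpretation and argues that $F_N$ is cut out by a condition intrinsic to the triple $(A,\iota,\lambda)$. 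That idea is classical and can be made to work; the paper's route has the side benefit of producing the subdatum $(\mathbf Q_B,\H_B^\pm)$ itself, which is what one would use to determine the Galois action on the individual components $F_B$ (as the remark following the theorem indicates).

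As written, however, your argument has a gap at exactly the step that carries the weight. The data you use to single out $F_N$ --- the matrix $B$, its integrality for the genus, its primitivity, and the quantity $\det(B)A$ --- are defined via a chosen $\mathfrak o$-basis of $H_1(A(\C),\Z)$, i.e.\ via transcendental data that $\sigma\in\Gamma_\Q$ does not transport (there is no canonical comparison between $H_1(A(\C),\Z)$ and $H_1(A^\sigma(\C),\Z)$). For the Galois-stability claim you must therefore re-express everything as algebraic invariants of $(\rho,\iota,\lambda)$: integrality as $\rho\in\operatorname{End}(A)$, primitivity as primitivity of $\rho$ in the $\Z$-module of Rosati-skew, $\iota$-intertwining endomorphisms, and --- crucially --- an identity between $\det(B)A$ and an algebraically defined quantity attached to $\rho$ (e.g.\ $-\rho^2$, or $\rho$ composed with its Rosati adjoint, read off in $\iota(\mathfrak o)$). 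You call this quantity the ``reduced-norm invariant'' and assert it is ``manifestly Galois-invariant'', but you neither define it algebraically nor prove that it equals $\det(B)A=N$; without that computation the equivariance assertion is circular, since the only definition of $N$ you have is through the basis-dependent matrix $B$. This is fixable --- the required identity is essentially the dictionary of \cite[Ch.~V]{VDG} you are appealing to --- but it is the real mathematical content of the theorem and cannot be left implicit, the more so because Remark~\ref{Rem:quaternionorder} shows that the published form of this dictionary is not reliable without extra hypotheses, and the non-principal-genus normalisations ($\lambda\in\mathfrak a^{-1}$, the factor $A$) must be tracked through it.
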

\begin{proof}
    Let $\mathfrak M$ denote the $\Z$-module of skew-hermitian, integral matrices $B=\linebreak\begin{pmatrix}
        a\sqrt D & \lambda \\ -\lambda' & b\sqrt D/A
    \end{pmatrix}$ in $M_2(K)$ such that $a$ and $b$ are integers, and $\lambda$ lies in $\mathfrak a$. Let $B \in \mathfrak M_{\text{prim}}$ (recall that we denote by $M_{\text{prim}}$ the primitive elements of a module $M$) be such that $\det(B)=N/A$. We define the quaternion algebra
    \[
    Q_B := \{M\in M_2(K):M'^t B M = \det(M)B\},
    \]
    and let $\mathbf Q_B := \GL_1(Q_B)$ be the corresponding inner form of $\GL_2$. The natural embedding of multiplicative groups $Q_B^* \subset \GL_2(K)$ extends to an embedding of algebraic groups $\mathbf Q_B \subset R_{L/\mathbb Q}\GL_2$. The composition $Q_B^* \hookrightarrow \GL_2(K) \xrightarrow{\text{det}} L^*$ coincides with the usual reduced norm on $Q_B^*$ and has values in $\mathbb Q^*$. Analogously, working functorially, the composition $\mathbf Q_B \hookrightarrow R_{L/\mathbb Q}\GL_2 \xrightarrow{\text{det}} R_{L/\mathbb Q}\G_m$ is the reduced norm $\mathrm{Nrd}:\mathbf Q_B \to \G_m$. In particular, $\mathbf Q_B$ is contained in $G$. We denote by $\iota:\mathbf Q_B\hookrightarrow G$ the corresponding closed embedding. 
    
    The splitting $K \otimes_{\mathbb Q}\mathbb R=\mathbb R \oplus \mathbb R$ gives a natural identification
    \begin{equation}\label{EqQBR}
        Q_B(\mathbb R)=\{(M_1,M_2)\in \GL_2(\mathbb R)^2:M_2^t B M_1 = \det(M_1)B\}.
    \end{equation}
    It is clear from this identification that the first projection $pr_1:Q_B(\mathbb R) \to \GL_2(\mathbb R)$ is an isomorphism, and it extends to an isomorphism of algebraic group $pr_1:\mathbf Q_{B,\mathbb R} \to \GL_{2,\mathbb R}$.
    The natural (transitive) left action of $Q_B(\mathbb R)$ on $\H_B^+ \cup \H_B^-$ 
    identifies $\H_B^+ \cup \H_B^-$ with the conjugacy orbit of the homomorphism
    \[
    \BS \to \GL_{2,\mathbb R} \cong \mathbf{Q}_{B,\mathbb R},\ a + bi \mapsto \left(M_1(a,b)\coloneqq\begin{pmatrix} a & b \\ -b & a \end{pmatrix},M_2(a,b)\right),
    \]
    where $M_2(a,b)$ is uniquely determined by \eqref{EqQBR}.
    This identification makes the pair $(\mathbf Q_B,\H_B^+ \cup \H_B^-)$ into a Shimura datum, whose reflex field is $\mathbb Q$ since it is attached to a quaternion algebra over $\Q$ \cite[Example 12.4]{Milne}.
    
    Consider the morphism of Shimura data $(\mathbf Q_B, \H_B^+ \cup \H_B^-) \to (G, (\H^+)^2\cup (\H^-)^2)$ induced by the natural embeddings $\mathbf Q_B \hookrightarrow G$ and $\H_B^{\pm} \hookrightarrow (\H^{\pm})^2$.
    Letting $\cK_B:=\mathbf Q_B \cap \cK_{\mathfrak a}$, this induces a morphism of Shimura varieties
    \[
    Q_B(\mathbb Q) \backslash Q_B(\A_\Q^\fin) \times (\H_B^+ \cup \H_B^-) / \cK_B \hookrightarrow G(\mathbb Q) \backslash G(\A_\Q^\fin) \times ((\H^+)^2\cup (\H^-)^2) / \cK_{\mathfrak a},
    \]
    or, equivalently,
    \begin{equation}\label{Morphism}
       Q_B(\mathbb Q)_+ \backslash Q_B(\A_\Q^\fin) \times \H_B / \cK_B \hookrightarrow G(\mathbb Q)_+ \backslash G(\A_\Q^\fin) \times (\H^+)^2 / \cK_{\mathfrak a}=X_g.
    \end{equation}
    This morphism is defined over $\mathbb Q$, being a morphism of Shimura varieties arising from a morphism of two Shimura data with reflex field $\mathbb Q$ \cite[Remark 13.8]{Milne} \cite[Corollary 5.4]{Deligne}. It follows that the image of \eqref{Morphism} is defined over $\mathbb Q$. %
    
    We claim that the image of \eqref{Morphism} is contained in $F_N$. For a subset $S \subset G(\A_\Q^\fin) \times (\H^+)^2$, we denote by $[S]$ its image in $X_g$. The image of \eqref{Morphism} is $\cup_{q \in Q_B(\A_\Q^\fin)}[q \times \H_B]$. Let $q=(q_p)_{p \in \cP}$ be in $Q_B(\A_\Q^\fin)$, and write $q=g\gamma^{-1}$ with $g \in G(\mathbb Q)_+$ and $\gamma \in \cK_{\mathfrak a}$ (such $g$ and $\gamma$ may always be found by connectedness of $X_g$, see also \cite[Prop.~4.18]{Milne}).  We have
    \[
    [q \times \H_B]=g^{-1}\cdot [q \times \H_B]\cdot \gamma=[1 \times g^{-1}\H_B],
    \]
    and $g^{-1}\H_B=\H_{B'}$ with $B'=(g')^t\cdot B\cdot g \in \mathfrak M \otimes \mathbb Q$. Write $r$ for the rational number $\prod_{p \in \cP}p^{-v_p(q_p)}$, and let $B'':=rB'$. Note that $B'' \in \mathfrak M_{\text{prim}}$. In fact, we have
    \begin{equation}\label{EqMatrices}
        B''=rB'=r(g')^t\cdot B\cdot g=r\cdot (\gamma')^t (q')^t B q \gamma = r \det(q) \cdot (\gamma')^t  B \gamma = j \cdot (\gamma')^t  B \gamma,
    \end{equation}
    where $j := r \det(q) \in \mathbb I_{\mathbb Q}$ is an invertible integral idele, and $\gamma$ is an invertible integral adelic matrix in $\GL_2$. It follows that $B''$ lies in $(\mathfrak M \otimes \Q) \cap (\mathfrak M \otimes \A_{\Z}^\fin)_{\text{prim}}=\mathfrak M_{\text{prim}}$. Moreover, \eqref{EqMatrices} gives us that $v_p(\det(B''))=v_p(\det(B))$ for all $p \in \cP$, and hence $\det(B'')=\pm \det (B)$. Finally, note that $\det(B'')/\det(B)=r^2N_{K/\mathbb Q}(\det g)$ is positive since $g$ lies in $G(\mathbb Q)_+$. Thus  $\det(B'')=\det (B)$, and since this holds for all $q$, we infer that the image of \eqref{Morphism} is contained in $F_N$, proving the claim.

    The claim shows that the Galois orbit of each component $F_B$ of $F_N$ is contained in $F_N$, and concludes the proof.
\end{proof}

\begin{remark}
    The $Gal(\ov\Q/\Q)$-action on the geometrically irreducible components of $F_N$ is determined by the same theory as described in Subsection \ref{SSec:HMSQ}. In practice however, this is made difficult by the complicated level structure, see also Remark~\ref{Rem:quaternionorder}. Instead, one can often proceed as in Remark~\ref{Rem:HZoverQ}.
\end{remark}

The strict transform of $F_N$ along $\tilde X_g \dashrightarrow X_g$ is a curve (over $\Q$) on the minimal resolution $\tilde{X}_g$. From now on, by abuse of notation, we denote this curve on $\tilde X_g$ by $F_N$ and its geometrically irreducible components by $F_B$. We call the latter \emph{Hirzebruch--Zagier divisors}.

\subsection{Intersections of geometrically irreducible divisors on Hilbert Modular Surfaces of K3 type}
We recall the following classification of the Enriques--Kodaira type of $X_g$ due to Hirzebruch, Zagier and (for certain genera) van der Geer:

\begin{theorem}[{\cite[Theorems 2 and 3]{HZ}, \cite[Theorem VII.3.3]{VDG}}]\label{Thm:kodaira}
    The Hilbert modular surface $X_g$ has Kodaira dimension $0$ if and only if
    $$
        \begin{array}{ll}
        D\in \cS_+\coloneqq\{29,37,40,41,44,56,57,69,105\}, & g=\text { principal genus, or} \\
        D\in\cS_-\coloneqq\{21,24,28,33,40\}, & {g}=\text { nonprincipal genus} .
        \end{array}
    $$
    In all of these cases, $X_g$ is birational to a K3 surface.
\end{theorem}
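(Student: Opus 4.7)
The plan is to compute the birational/numerical invariants of the smooth minimal model $Y_g$ directly and then read off the Kodaira dimension. Working on $\tilde X_g$, I would first use the standard Hirzebruch description of the canonical class on the open Hilbert modular surface $X_g$: up to a convention factor, $K_{X_g}$ is $2M$ where $M$ is the line bundle of weight-one Hilbert modular forms. Passing to $\tilde X_g$, one corrects by the discrepancies of the resolutions of the cusp singularities (described by the Hirzebruch--Jung continued fractions of the $\mathfrak o^{\times 2}$-equivalence classes of cones in the fan $\Sigma$ of Proposition~\ref{Prop:CoverQ}) and of the elliptic singularities (explicitly enumerated in Proposition~\ref{Prop:elliptic}). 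Standard Proportionality gives $c_1^2$ and $c_2$ of the ``open'' part in terms of $2\zeta_K(-1)$, and the cuspidal/elliptic corrections are all local and computable from the lists above.

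Second, I would bound the Kodaira dimension from these invariants. The asymptotic growth $\zeta_K(-1) \asymp D^{3/2}$ shows that for $D$ sufficiently large the self-intersection $K_{Y_g}^2$ is positive and $K_{Y_g}$ is nef (after contracting all $(-1)$-curves that appear in elliptic or ``exceptional'' configurations), forcing Kodaira dimension $2$. Conversely, for very small $D$ the surface is well-known to be rational. So the possibly non-trivial range of $D$ is a priori finite, and a crude numerical sweep narrows the list to finitely many candidate pairs $(D,g)$ containing the claimed sets $\cS_+$ and $\cS_-$.

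Third, for each such candidate I would verify the K3 conclusion directly. The strategy is to compute $K_{\tilde X_g}$ as a $\Z$-linear combination of cusp and elliptic exceptional divisors together with some Hirzebruch--Zagier classes $F_B$, and then identify a blowdown $\tilde X_g \to Y_g$ of the $(-1)$-curves over elliptic points (and any $(-1)$-curve in cusp resolutions if they arise) such that on $Y_g$ the resulting class is linearly equivalent to $0$. Combined with the independent computation of $\chi(\mathcal O_{Y_g}) = 2$ and $q(Y_g) = 0$ (the latter follows because $X_g$ is simply connected away from a divisor, or directly from Matsushima's vanishing for $H^1$ of arithmetic quotients by cocompact lattices in the boundary-resolved setting), one concludes $p_g(Y_g) = 1$, $q(Y_g) = 0$, $K_{Y_g} \sim 0$, i.e.\ $Y_g$ is K3.

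The main obstacle is the third step: producing the explicit linear equivalence $K_{Y_g} \sim 0$ surface by surface requires knowing the precise shape of each cusp resolution cycle and the precise intersection behaviour of enough Hirzebruch--Zagier curves $F_B$ to expand the divisor of a weight-$2$ modular form as an integral combination of boundary curves. This is the content of \cite{HZ} (via explicit construction of modular cusp forms vanishing on prescribed Hirzebruch--Zagier divisors), supplemented by \cite[Ch.~VII]{VDG} to handle the nonprincipal genus cases and the borderline discriminants. I would follow that route for a full proof, rather than attempting a uniform argument.
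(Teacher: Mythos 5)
This theorem is not proved in the paper at all: it is quoted verbatim as a known classification, with the proof delegated entirely to \cite[Theorems 2 and 3]{HZ} and \cite[Theorem VII.3.3]{VDG}. Your proposal is therefore not being measured against an internal argument but against the literature, and what you have written is a broadly faithful outline of how Hirzebruch--Zagier and van der Geer actually proceed: proportionality plus the cusp and elliptic corrections give $c_1^2$, $c_2$ and $\chi(\cO)$; the growth $\zeta_K(-1)\asymp D^{3/2}$ forces general type for large $D$; and the Kodaira dimension $0$ cases are settled by exhibiting an explicit weight-$(2,2)$ cusp form whose divisor is supported on exceptional and Hirzebruch--Zagier curves, so that $K_{Y_g}\sim 0$ after the blowdowns. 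Since $Y_g$ is simply connected with $q=0$, Kodaira dimension $0$ then indeed means K3.

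That said, as a proof your text has two gaps. First, and decisively, the entire case-by-case content --- the explicit linear equivalences $K_{Y_g}\sim 0$ for the fourteen pairs $(D,g)$, and the verification that every \emph{other} candidate surviving your ``crude numerical sweep'' has Kodaira dimension $1$ or $2$ rather than $0$ --- is deferred to the very references the paper cites. The ``only if'' direction is where most of the work lives: numerical invariants alone do not separate Kodaira dimension $0$ from the honestly elliptic surfaces in the intermediate range of discriminants, so one must actually decide, discriminant by discriminant, whether some multiple of $K_{Y_g}$ is trivial or moves in a pencil. Your sketch acknowledges this but does not supply it, so the proposal is an annotated citation rather than a proof. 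Second, a minor inaccuracy: the Hilbert modular groups are non-uniform lattices, so the appeal to ``Matsushima's vanishing for cocompact lattices'' is misstated; the vanishing of $q$ follows from Matsushima--Shimura for irreducible (non-cocompact) lattices in $\SL_2(\R)^2$, or more simply from the known simple connectivity of the resolved compactification. Neither point makes your outline wrong, but neither can the outline stand on its own without the cited sources.
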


Furthermore, in the above cases the minimal model $Y_g$ is obtained from $\tilde X_g$ by first contracting all components of $F_N$ for $N=1,2,3,4$ and $N=9$ when $3 \mid D$, and then successively any elliptic divisors whose self-intersection becomes $-1$ \cite[\S VII.4]{VDG}.

The main theorem of this subsection is the following:

\begin{theorem}\label{Thm:intersection}
    The intersection graphs of the sets of cuspidal, elliptic, and Hirzebruch--Zagier divisors with $N \leq 10$ on the minimal Hilbert modular surfaces $Y_g(D)$ of K3 type are the ones listed in Appendix~\ref{Sec:graphs}.
\end{theorem}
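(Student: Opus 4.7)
The plan is to prove Theorem~\ref{Thm:intersection} by an explicit case-by-case computation, running through each of the fourteen surfaces singled out by Theorem~\ref{Thm:kodaira}. For each pair $(D,g)$ one must (a) enumerate the cusp, elliptic, and Hirzebruch--Zagier divisors on $\tilde X_g$; (b) determine the $\Gal(\bar\Q/\Q)$-action on the geometrically irreducible components so as to identify which divisors are defined over $\Q$ and how they cluster into $\Gal$-orbits; (c) compute all pairwise intersection numbers; and (d) track the birational contractions $\tilde X_g\dashrightarrow Y_g$ dictated by the discussion following Theorem~\ref{Thm:kodaira}. The final intersection graph on $Y_g$ is then read off from this data.

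For the cusps, Proposition~\ref{Prop:CoverQ} tells us that all cusps are rational and their resolutions are cycles of $\P^1_\Q$'s. The length of each cycle and the self-intersections of its components are read off from the Hirzebruch--Jung continued fraction expansion of the totally positive generator of the relevant module $M_s=\mathfrak c^{-2}\mathfrak a^{-1}$, exactly as described in the proof of Proposition~\ref{Prop:CoverQ} and \cite[II.5]{VDG}. This is a finite combinatorial calculation indexed by $\Cl(K)$. For the elliptic divisors, the classification in Proposition~\ref{Prop:elliptic} plus the standard class-number formulas for elliptic points of types $2$, $3^+$, $3^-$ on $X_g$ (as tabulated in \cite[V.1]{VDG}) give the number of each type of configuration; none of these divisors meets any other elliptic divisor and their possible incidences with Hirzebruch--Zagier divisors and cusps can be computed via local uniformisation.

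The main work lies in the Hirzebruch--Zagier divisors. For each $N\leq 10$ I will enumerate $\PSL(\Lambda)$-orbit representatives of primitive skew-hermitian matrices $B$ with $\det(B)A=N$ (a finite problem reducible to class-number considerations on the quaternion order defined in Theorem~\ref{Thm:HZoverQ}), compute the geometric genus and self-intersection of each $F_B$ on $\tilde X_g$ via \cite[V.1.5]{VDG} (with the correction alluded to in Remark~\ref{Rem:quaternionorder}), and determine the intersection numbers $F_B\cdot F_{B'}$ and the incidences $F_B\cdot E_s$, $F_B\cdot (\text{elliptic})$ from the explicit formulas in \cite[V.3, V.5]{VDG}. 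The $\Gal(\bar\Q/\Q)$-action on the set of $F_B$'s, needed to group them into $\Q$-components, is pinned down using the Shimura-theoretic recipe of Subsection~\ref{SSec:HMSQ} combined with the moduli interpretation of $F_B$ as a subvariety parametrising abelian surfaces with extra quaternionic multiplication, as in the approach suggested in Remark~\ref{Rem:HZoverQ}.

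Once these data are assembled on $\tilde X_g$, passage to $Y_g$ is mechanical: contract every component of $F_1,F_2,F_3,F_4$ (and of $F_9$ when $3\mid D$), update intersection numbers using the projection formula, and repeatedly contract any $(-1)$-elliptic divisor so produced; this matches each of the graphs displayed in Appendix~\ref{Sec:graphs}. The principal obstacle I expect is the bookkeeping of the Galois action on components of $F_N$ for composite or higher $N$, because the naive quaternion-order description of the level structure at $F_B$ is not always accurate (Remark~\ref{Rem:quaternionorder}); for those cases I plan to verify the orbit structure indirectly, either by using the moduli interpretation or by cross-checking the resulting configuration against the known Picard rank and discriminant of $Y_g$ from \cite{EK}, which leaves only finitely many consistent possibilities and thereby forces the graph to be as claimed.
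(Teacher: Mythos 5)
Your overall strategy --- case-by-case enumeration of the cusp, elliptic and Hirzebruch--Zagier divisors, explicit computation of all pairwise intersections on $\tilde X_g$, and then tracking the contractions down to $Y_g$ --- is exactly the paper's proof: the divisor lists are the Magma-computed tables of Appendix~\ref{Sec:computations}, and the intersections are obtained from Propositions~\ref{Prop:HZintCusp}, \ref{Prop:HZEllmult}, \ref{Prop:HZintEll} and \ref{Thm:HZintHZ}. Two caveats on how you propose to source the data. First, the formulas you cite from \cite[V.3, V.5]{VDG} (and the Hirzebruch--Zagier/Hausmann formula of Proposition~\ref{Thm:HZintHZ}) only give \emph{aggregate} quantities such as $(T_M',T_N')_{\mathrm{tr}}$ and the total intersection of $F_N$ with a cusp resolution cycle; to draw the graphs one needs branch-by-branch information, and this is precisely the content of the paper's refinements: Proposition~\ref{Prop:HZintCusp} identifies \emph{which} cusp divisor $C_{\sigma,i}$ a given branch of $F_B$ meets via the toric coordinates $u_{\sigma,i}$, Propositions~\ref{Prop:HZEllmult} and \ref{Prop:HZintEll} locate and bound the intersections with elliptic divisors via the binary form $\phi_\tau$, and Lemma~\ref{Lem:HZcyc} resolves how two branches crossing at a $3^-$ point intersect after desingularisation. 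You would need to establish these component-level statements rather than quote \cite{VDG}. Second, the Galois action on the components of $F_N$ is \emph{not} pinned down in the paper by the Shimura-theoretic recipe together with the moduli/quaternionic interpretation --- the paper explicitly flags that route as impractical because of the level-structure subtlety of Remark~\ref{Rem:quaternionorder} --- but by the elementary device of Remark~\ref{Rem:HZoverQ}: since $F_N$ (Theorem~\ref{Thm:HZoverQ}) and all cusp divisors (Proposition~\ref{Prop:CoverQ}) are defined over $\Q$, components of $F_N$ meeting the cusp resolutions in pairwise distinct curves are individually Galois-stable. Your fallback of cross-checking against Picard ranks from \cite{EK} is a plausible alternative but is not what the paper does and would need those ranks as independent input.
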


\paragraph{Intersection of Hirzebruch--Zagier and cusp divisors.~}The intersection of $F_N$ with the cusp divisors was explicitly obtained in \cite[\S V.5]{VDG}. Analysing the argument more precisely, we now give a description of the intersection of the components $F_B$ with the cusp divisors.

Recall from \S\ref{SSec:cusps} the ordered $\mathfrak o$-module $M_s$ associated to a cusp $s$ and its canonical fan decomposition $\Sigma$, which gives rise to the toroidal resolution of $s$. Let $\sigma\in\Sigma$ be a cone spanned by boundary points $x_{\sigma,1}, x_{\sigma,2}\in (M_s)_+$. The ray through $x_{\sigma,i}$ corresponds to a cusp divisor $C_{\sigma,i}$ at the boundary $Z(\sigma)^\infty$ of the affine toric variety $Z(\sigma)$. In a formal neighbourhood, this divisor is given by the vanishing of the coordinate axis $u_{\sigma,i}=0$.

We make use of the fact that the cusp $s_{[\mathfrak o]}$ at infinity can be translated to any cusp $s_{[\mathfrak c]}$ by a suitable element $t_s\in\SL_2(K)$. (This translation realises an isomorphism $X_{[\mathfrak a\mathfrak c^2]}\cong X_{[\mathfrak a]}$.) 

\begin{proposition}\label{Prop:HZintCusp}
    Let $s_{[\mathfrak c]}$ be a cusp with norm $C=\Nm(\mathfrak c)$ and let $\sigma$ be a cone in the canonical decomposition of $M_s$.

    Define \[B_{p,q,b}=\begin{pmatrix}
        0 & \lambda_{p,q}\\
        -\lambda_{p,q}' & \frac{b}{AC^2}\sqrt{D}
    \end{pmatrix}, \lambda_{p,q}=p x_{\sigma,1}+qx_{\sigma,2}.\]

    In a formal neighbourhood of the boundary $Z(\sigma)^\infty$, the vanishing locus of
    \[u_{\sigma,1}^{p/n} = \zeta_n^b u_{\sigma,2}^{q/n},\]
    such that $p,q$ are non-negative integers and $\zeta_n=e^{2\pi i/n}$ where $n=\gcd(p,q)$, is a branch of the curve $F_{t_s'^t B_{p,q,b} t_s}$. All branches of Hirzebruch--Zagier divisors intersecting the cusp divisors over $s$ are of this form.
\end{proposition}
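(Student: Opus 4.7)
The plan is to reduce to the cusp at infinity and then carry out an explicit toric computation there. Pick $t_s\in\SL_2(K)$ sending the principal cusp $s_{[\mathfrak o]}$ to $s_{[\mathfrak c]}$; by the $\SL_2(K)$-equivariance of the Shimura-theoretic construction of Hirzebruch--Zagier divisors from Theorem~\ref{Thm:HZoverQ}, pulling back along $t_s$ identifies $F_B$ with $F_{t_s'^t B t_s}$. The rescaling induced by $t_s$ on the associated $\mathfrak o$-module $M_s$ inserts exactly the factor $1/C^2$ visible in the lower-right entry of $B_{p,q,b}$, so it will suffice to prove the proposition at the infinity cusp with $M_s=\mathfrak a^{-1}$.

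At the infinity cusp I would first expand the defining equation of $\H_B$ for a general skew-hermitian, integral, primitive matrix $B=\begin{pmatrix} a\sqrt D & \lambda \\ -\lambda' & b\sqrt D/A\end{pmatrix}$ as
\[
a\sqrt{D}\,z_1 z_2 + \lambda z_2 - \lambda' z_1 + b\sqrt{D}/A = 0,
\]
and observe that for a branch of $\H_B$ to approach infinity (where $\operatorname{Im} z_1,\operatorname{Im} z_2\to\infty$) the quadratic term must vanish, i.e.\ $a=0$. This already pins down the shape of the matrix appearing in the statement.

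Next, on the toric chart $Z(\sigma)$ attached to a cone $\sigma=\langle x_{\sigma,1},x_{\sigma,2}\rangle$ of the canonical fan on $M_s$, the boundary coordinates take the form $u_{\sigma,i}=\exp(2\pi i\,\phi_i(z))$, where $(\phi_1,\phi_2)$ is the basis of linear functionals on $K\otimes\R$ dual to $(x_{\sigma,1},x_{\sigma,2})$ under the trace pairing (normalised so that these exponentials descend modulo $M_s$). Under the identification $M_s\xrightarrow{\sim} M_s^\vee,\ \lambda\mapsto \lambda'/\sqrt D$ afforded by the different, the element $\lambda_{p,q}=p x_{\sigma,1}+q x_{\sigma,2}$ corresponds to $p\phi_1+q\phi_2$, so $u_{\sigma,1}^p u_{\sigma,2}^q$ is the exponential of the linear form associated with $\lambda_{p,q}$. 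Exponentiating the linear equation $\lambda_{p,q} z_2 - \lambda_{p,q}' z_1 + b\sqrt{D}/(AC^2) = 0$ then produces, after tracking all normalisations, a monomial relation that factors over its $n=\gcd(p,q)$ irreducible branches as $u_{\sigma,1}^{p/n}=\zeta_n^b u_{\sigma,2}^{q/n}$, the phase $\zeta_n^b$ arising from exponentiating the constant term. Conversely, any branch of a Hirzebruch--Zagier curve meeting $C_{\sigma,i}$ has a linear defining equation whose pairing with the dual basis produces non-negative integer exponents, forcing $\lambda$ to be of the form $\lambda_{p,q}$.

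The main technical obstacle is pinning down all the normalisations --- the trace pairing, the role of the different $(\sqrt D)$ in identifying $M_s$ with its trace dual, and the precise conjugation of $\Lambda$ and $M_s$ effected by $t_s$ --- so that the integer exponents $(p,q)$ and the phase $\zeta_n^b$ come out exactly as stated. This bookkeeping has been carried out for the principal cusp of the principal genus in \cite[\S V.5]{VDG}, and the plan is to mirror that treatment carefully while tracking the extra data arising from $t_s$ and the non-principal class $[\mathfrak c]$.
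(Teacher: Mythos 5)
Your proposal follows essentially the same route as the paper's proof: translate to the cusp at infinity via $t_s$, observe that meeting the boundary forces $a=0$ in the skew-hermitian matrix, and then exponentiate the resulting linear equation using the toric coordinate relation $2\pi i z_j=\sum_i x_{\sigma,i}^{(j)}\log u_{\sigma,i}$ from \cite[II.2]{VDG} (your dual-basis/trace-pairing formulation is the same relation read in the other direction). The normalisation bookkeeping you defer is exactly what the paper carries out in its two displayed computations, so the argument is correct and matches.
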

\begin{proof}
    Let $B\in M_2(K)$ be a primitive, skew-hermitian matrix and assume that $F_B$ passes through $s$. After translating by $t_s^{-1}$, which has the effect of replacing $B$ by ${t_s^{-1}}'^tBt_s^{-1}$ and $\mathfrak a$ by $\mathfrak a\mathfrak c^2$, we may assume that $s=s_{[\mathfrak o]}$.

    Now a cusp $x\in\P^1(K)$ lies on the preimage of $F_B$ in $(\H^+)^2$, if and only if $B$ can be chosen in its $\SL(\Lambda)$-orbits such that
    \begin{equation}\label{Eq:HZ}
        a\sqrt{D}xx'-\lambda' x + \lambda x'+\frac{b}{AC^2}\sqrt{D}=0
    \end{equation}
    For the cusp $(1:0)$ at infinity this means that $a=0$.

    By \cite[II.2]{VDG}, the coordinates $z_1,z_2$ and $u_{\sigma,1},u_{\sigma,2}$ can be chosen to satisfy
    \[2\pi i z_j=x_{\sigma,1}^{(j)}\log u_{\sigma,1}+x_{\sigma,2}^{(j)}\log u_{\sigma,2},\quad j=1,2,\]
    where $x^{(j)}$ denotes the image of $x\in K$ under the $j$-th real embedding.

    Writing $\lambda$ as $\lambda_{p,q}$ in the basis $x_{\sigma,1},x_{\sigma,2}$ and plugging into (\ref{Eq:HZ}), one obtains
    \[(-p\log u_{\sigma,1} + q\log u_{\sigma,2})\left(x_{\sigma,1}^{(1)}x_{\sigma,2}^{(2)}-x_{\sigma,1}^{(2)}x_{\sigma,2}^{(1)}\right)+b\sqrt{D}\Nm(\mathfrak a\mathfrak c^2)^{-1}=0\]
    which becomes
    \[u_{\sigma,1}^{p/n}=\zeta_n^{b/n}u_{\sigma,2}^{q/n}.\]
\end{proof}
\begin{remark}\label{Rem:HZoverQ}
    If Proposition~\ref{Prop:HZintCusp} gives that any two components of $F_N$ intersect the cusp resolution in different curves, one may conclude that all components of $F_N$ are defined over $\Q$. Indeed this follows immediately from Proposition~\ref{Prop:CoverQ} and  Theorem~\ref{Thm:HZoverQ}.
\end{remark}

\paragraph{Intersection of Hirzebruch--Zagier and elliptic divisors.~}We start with the following observation (which may also be found e.g.\ in the discussion after Definition 6.1 in \cite{VDG} without a proof):

\begin{lemma}\label{Lem:transversal}
    For nonsimilar $B,B' \in M_2(K)$, the intersection number  $(\H_B.\H_{B'})_{(\H^+)^2}$ is $\leq 1$.
\end{lemma}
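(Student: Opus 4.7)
I will realise each $\H_B$ as the graph of a real Möbius transformation $\phi_B$ of $\H^+$ and then count intersection points as fixed points of the composition $\phi_{B'}^{-1}\circ\phi_B$. Writing $B=\left(\begin{smallmatrix} a\sqrt{D} & \lambda \\ -\lambda' & b\sqrt{D}/A\end{smallmatrix}\right)$, the equation $(z_2,1)B(z_1,1)^t=0$ expands to $a\sqrt{D}\,z_1z_2+\lambda z_2-\lambda' z_1+b\sqrt{D}/A=0$, which I solve for $z_2$ to obtain $\phi_B(z)=(\lambda' z-b\sqrt{D}/A)/(a\sqrt{D}\,z+\lambda)$. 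The associated matrix $M_B=\left(\begin{smallmatrix}\lambda'& -b\sqrt{D}/A\\ a\sqrt{D}& \lambda\end{smallmatrix}\right)$ lies in $M_2(\R)$ and satisfies $\det M_B=\det B$. For $\det B>0$, $\phi_B$ defines an element of $\PSL_2(\R)$, an automorphism of $\H^+$ whose graph in $(\H^+)^2$ coincides with $\H_B$; when $\det B\leq 0$, a direct analysis shows that $\phi_B$ is either constant-valued on $\R$ or reverses orientation, so $\H_B\cap(\H^+)^2=\emptyset$ and there is nothing to prove. I may therefore assume $\det B,\det B'>0$.

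An intersection point $(z_1,z_2)$ then satisfies $z_2=\phi_B(z_1)=\phi_{B'}(z_1)$; equivalently, $z_1\in\H^+$ is a fixed point of $\psi\coloneqq\phi_{B'}^{-1}\circ\phi_B\in\PSL_2(\R)$. If $\psi$ were the identity in $\PGL_2(\R)$ then $M_B$ and $M_{B'}$ would be proportional, and reading off their entries together with skew-Hermiticity would force $B$ and $B'$ themselves to be proportional, contradicting nonsimilarity. So $\psi\neq\mathrm{id}$, and hence it fixes at most two points of $\P^1(\C)$; since its matrix has real coefficients these fixed points are either both real or form a complex conjugate pair, so at most one of them lies in $\H^+$. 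Thus $\H_B\cap\H_{B'}\cap(\H^+)^2$ contains at most one point.

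It remains to check that the intersection at such a point $z_1$ is transverse. The intersection multiplicity equals the order of vanishing of $\phi_B-\phi_{B'}$ at $z_1$, and by the chain rule $\psi'(z_1)=\phi_B'(z_1)/\phi_{B'}'(z_1)$. Now $\psi$, being a nontrivial element of $\PSL_2(\R)$ with a fixed point in $\H^+$, must be elliptic, and the derivative of an elliptic element at its interior fixed point is a complex number of modulus $1$ distinct from $1$ (as one sees by conjugating the fixed point to $i$, under which $\psi$ becomes a nontrivial rotation with derivative $e^{-2i\theta}$, $\theta\not\equiv 0\bmod\pi$). Hence $\phi_B'(z_1)\neq\phi_{B'}'(z_1)$, the zero of $\phi_B-\phi_{B'}$ at $z_1$ is simple, and $(\H_B.\H_{B'})_{(\H^+)^2}\leq 1$ as required. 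The only points I expect to require careful verification are (i) the implication from $\psi=\mathrm{id}$ to proportionality of the skew-Hermitian $B,B'$, and (ii) the degenerate-determinant cases, both of which reduce to short direct calculations from the explicit form of $M_B$.
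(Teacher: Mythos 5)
Your proof is correct and follows essentially the same route as the paper's: both realise $\H_B$ and $\H_{B'}$ as graphs over $\H^+$ of Möbius transformations with real coefficients, and observe that two distinct such graphs meet in $\P^1(\C)^2$ in at most two points which are either real or form a complex-conjugate pair, so at most one lies in $(\H^+)^2$. Your extra transversality check (via the derivative of the nontrivial elliptic element $\psi$ at its interior fixed point) merely makes explicit the multiplicity-one assertion that the paper handles implicitly through the conjugate-pair observation.
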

\begin{proof}
    In fact, both $\H_B$ and $\H_{B'}$ are the intersection of graphs in $\P^1(\C)^2$ of linear fractional transformations with real coefficients with the open $(\H^+)^2\subset \P^1(\C)^2$. Two such graphs intersect in $\P^1(\C)^2$ in exactly two (with multiplicity) complex-conjugate points, and at most one of them can have positive imaginary parts, or, equivalently, lie in $(\H^+)^2$.
\end{proof}

\begin{proposition}\label{Prop:HZEllmult}
The intersection number of a branch of a Hirzebruch--Zagier divisor and an elliptic divisor associated to an elliptic point of order $2$ or $3$ in $\tilde X_g$ is at most one.
\end{proposition}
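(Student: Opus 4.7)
The plan is a local analytic analysis at the elliptic point. Let $\tau \in (\H^+)^2$ be a lift of the given elliptic point, and let $\gamma$ generate its isotropy in $\PSL(\Lambda)$. I would pick local analytic coordinates $(w_1, w_2)$ on a neighbourhood $U$ of $\tau$ in which $\gamma$ acts diagonally as $(w_1, w_2) \mapsto (\lambda_1 w_1, \lambda_2 w_2)$, with eigenvalues $(-1,-1)$ for type $2$, $(\zeta_3, \zeta_3)$ for type $3^+$, and $(\zeta_3, \zeta_3^{-1})$ for type $3^-$, as in the proof of Proposition~\ref{Prop:elliptic}. A branch of $F_B$ at the image $p = [\tau]$ of $\tau$ lifts to a smooth branch of $\H_B$ through $\tau$, whose tangent direction determines a point of $\P(T_\tau U) \cong \P^1$.

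\textbf{Scalar linearisation} (types $2$ and $3^+$): here every tangent line at $\tau$ is $\gamma$-invariant. I would first argue that $\gamma$ must stabilise $\H_B$ locally, via Lemma~\ref{Lem:transversal}: otherwise $\gamma \H_B$ would be a distinct Hirzebruch--Zagier branch through $\tau$ with the \emph{same} tangent line as $\H_B$, so $(\H_B \cdot \gamma\H_B)_\tau \ge 2$, contradicting the lemma. With $\H_B$ thus $\gamma$-invariant, a direct computation in blowup coordinates on $\Bl_\tau U$ (whose $\gamma$-quotient is the minimal resolution by Proposition~\ref{Prop:elliptic}) shows that the image of the strict transform meets the exceptional $(-2)$- respectively $(-3)$-curve transversally at the single point determined by the tangent direction, yielding local intersection multiplicity $1$.

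\textbf{Type $3^-$}: the elliptic divisor has two components $E_1, E_2$ meeting at a single point $p$, constructed via the blowup--quotient--contract procedure detailed in the proof of Proposition~\ref{Prop:elliptic}. I would split on the tangent direction of $\H_B$ at $\tau$. \emph{Subcase (a)}: the tangent lies along a coordinate axis (a $\gamma$-eigenline); the same Lemma~\ref{Lem:transversal} argument as above forces $\gamma \H_B = \H_B$, and the strict transform in $\Bl_\tau U$ passes through a fixed point $\tilde\tau_i \in E_\tau$. After blowing up $\tilde\tau_i$, the further strict transform meets $E_{\tilde\tau_i}$ transversally at a point away from $E_\tau \cap E_{\tilde\tau_i}$; passing to the quotient $V'$ and contracting the $(-1)$-curve $E'_0$, the image is a smooth branch meeting only $E_i$, with multiplicity $1$. \emph{Subcase (b)}: the tangent is not along an eigenline. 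Then $\H_B, \gamma\H_B, \gamma^2\H_B$ are three distinct branches through $\tau$ with three distinct tangent directions, and the strict transform meets $E_\tau$ at a point $q \ne \tilde\tau_1, \tilde\tau_2$ where $\gamma$ acts freely. Hence the image in $V'$ is a smooth curve meeting only $E'_0$ transversally at $\pi(q)$, with $\pi(q)$ strictly interior to $E'_0 \cong \P^1$ (i.e.\ distinct from both $E'_0 \cap E'_1$ and $E'_0 \cap E'_2$). Identifying $E'_0 = \P(T_p V)$ under the contraction $V' \to V$, the tangent direction of the resulting branch at $p$ corresponds to $\pi(q)$, which differs from the tangent directions of $E_1$ and $E_2$ at $p$; the branch therefore meets each of $E_1, E_2$ transversally with multiplicity $1$.

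The main obstacle is the type $3^-$ subcase (b): verifying that the image curve is genuinely transversal to each of $E_1, E_2$ at their common point $p$. This relies on identifying the exceptional $\P^1$ of the contraction with $\P(T_p V)$ so that distinct points of $E'_0$ correspond to distinct tangent directions at $p$, combined with the observation that the two endpoints $E'_0 \cap E'_i$ encode precisely the tangent directions of $E_i$ at $p$ and hence differ from the interior point $\pi(q)$ produced by the branch.
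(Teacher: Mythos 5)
Your proposal is correct and follows essentially the same route as the paper: the same use of Lemma~\ref{Lem:transversal} to force $\gamma$-invariance of $\H_B$ when tangent directions coincide, the same case split by elliptic type and (for $3^-$) by whether the tangent direction is an eigenline, and the same explicit resolution from Proposition~\ref{Prop:elliptic}. The only difference is cosmetic: the paper extracts the final multiplicities via the projection formula for the finite quotient map (e.g.\ $\pi_*E_\tau=3E_0'$ and the blowdown formula), whereas you track the branch through the blowups, quotient and contraction in coordinates, identifying the contracted $(-1)$-curve with $\P(T_pV)$ — both yield the same intersection numbers.
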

\begin{proof}
    Let $x \in X_g(\C)=\Gamma \backslash (\H^+)^2$ be an elliptic point of order $r$, $\tau \in (\H^+)^2$ be a lift, and $F_B \subset \tilde X_g$ be a Hirzebruch--Zagier divisor that intersects the exceptional divisor above $x$. After possibly replacing $B$ with a matrix $T'^{t} B T, T \in \Gamma$ representing the same Hirzebruch--Zagier divisor, we may assume that $\tau \in \H_B$.

    \vskip2mm

    Assume first that {\bf $x$ is of type $2$ or $3^+$}. Let $\gamma \in \Gamma$ be the isotropy generator of $\tau$ whose differential $(d \gamma)_{\tau}$ at $\tau$ is $\zeta_r\cdot \operatorname{Id}$. Both $\H_B$ and $\gamma \cdot \H_B=\H_{\gamma'^{t} B \gamma}$ pass through $\tau$, and they are equal to the first order near this point, hence they are tangent to each other, and the observation above then tells us that $\gamma \cdot \H_B=\H_B$, i.e.\ $\H_B$ is invariant under $\gamma$.
    
    Since the differential $(d \gamma)_{\tau}$ is of the form $\zeta_r\cdot \operatorname{Id}$, a local neighbourhood of the exceptional divisor $E_{x}$ above $x$ in $\tilde X_g(\C)$ is obtained by taking the quotient the blowup $\Bl_{\tau}U$ in $\tau$ of a $\gamma$-invariant neighbourhood $U$ of $\tau$ in $(\H^+)^2$ by the lift of the action by the isotropy group of $\gamma$. Let $E_{\tau}$ (resp.\ $E_x$) be the exceptional divisor in $\Bl_{\tau}U$ (resp.\ in $\tilde X_g$), $\tilde \H_B$ be the proper transform of $\H_B$ in $\Bl_{\tau}U$. Let $V \coloneqq (\Bl_{\tau}U)/\langle \gamma \rangle$, and $\pi:\Bl_{\tau}U \to V$ be the natural projection. Note that $\pi$ is a finite, hence proper map, that $\tilde \H_B=\pi^{-1}(F_B)$ and that $E_x=\pi(E_{\tau})$. Then (a local version of) the projection formula gives $1=(\tilde \H_B.E_{\tau})=(F_B.E_x)$, as wished.

    \vskip2mm
    
    For the remaining case, assume that {\bf $x$ is of type $3^-$}. 

    We use the notation from the proof of Proposition~\ref{Prop:elliptic}. Consider the intersections of $\H_B,\gamma \H_B$ and $\gamma^2\H_B$ near $\tau$. If the tangential direction of $\H_B$ in $\tau$ is not $w_1=0$ or $w_2=0$, then these three curves intersect pairwise transversally at $\tau$. If the tangential direction of $\H_B$ in $\tau$ is $w_1=0$ or $w_2=0$, then the three curves intersect tangentially and so they must be equal by the observation at the very beginning of this proof. We treat the two cases separately.

    Let $\tilde \H_B$ be the proper transform of $\H_B$ in $\Bl_{\tilde \tau_1,\tilde \tau_2,\tau}U$, and $F_B'$ be the proper transform of $F_B$ in $V'$. In the first case, $\tilde \H_B$ intersects transversally $E_{\tau}$, it does not intersect $E_{\tilde \tau_i},i=1,2$, and we have $\pi^{-1}(F_B')=\tilde\H_B + \gamma \tilde\H_B+\gamma^2 \tilde\H_B$. Then the projection formula gives $(3E'_0, F_B')=(E_{\tau},\tilde\H_B + \gamma \tilde\H_B+\gamma^2 \tilde\H_B)$ (even though $\pi(E_{\tau})=E'_0$ {\em scheme-theoretically}, we have $\pi_*(E_{\tau})=3E'_0$ in the Chow groups), and the latter is $3$, hence $(E'_0, F_B')=1$. Analogously, we get $(E'_i, F_B')=(E_{\tilde\tau_i},\tilde\H_B + \gamma \tilde\H_B+\gamma^2 \tilde\H_B)=0$ for $i=1,2$. After blowing down $E'_0$, we get $(F_B,E_i)=1$ for $i=1,2$, as wished. In the second case, we may assume without loss of generality that $\H_B$ is tangent to $w_1=0$. The proper transform of $\H_B$ in $\Bl_\tau U$ then intersects $E_{\tau}$ only in the point $\tilde \tau_1$, and it does so transversally, and hence the curve $\tilde \H_B$ in $\Bl_{\tilde \tau_1,\tilde \tau_2,\tau}U$ does not intersect $E_{\tau}$ and $E_{\tilde \tau_2}$ and it only intersects $E_{\tilde \tau_1}$ in one point, and it does so transversally. Moreover, in this second case we have $\pi^{-1}(F_B)=\tilde \H_B$, and then the projection formula gives $(E'_0,F_B')=\frac13(3E'_0, F_B')=(E_{\tau},\tilde \H_B)=0$, and $(E'_i, F_B') = (E_{\tilde \tau_i}, \tilde \H_B)$ which is $1$ for $i=1$ and $0$ for $i=2$. In the blow-down $V$, we then get $(E_1,F_B)=1$ and $(E_2,F_B)=0$, concluding the proof.
\end{proof}

It remains to determine which curves $F_B$ meet which elliptic divisors. Rather than determining the elliptic points on the image of $F_B$ in $X_g$ for fixed $B$, it is easier to fix the elliptic point and compute a list of suitable $B$.
Let $\tau=(z_1,z_2)\in(\H^+)^2$ be the lift of an elliptic point of order $2$ or $3$. The free $\Z$-submodule $L_\tau\subset\mathfrak M$ of skew-hermitian, integral matrices $B$ such that $(z_2, 1)B(z_1, 1)^t=0$ has rank $2$ (see \cite[Def.~6.1, Prop.~7.1]{VDG}). Writing $B_1, B_2$ for a basis of $L_\tau$, define the positive definite integral binary quadratic form
\[\phi_\tau(x,y)=\det(xB_1+yB_2)\cdot A.\]
One immediately obtains the following.

\begin{proposition}\label{Prop:HZintEll}
    Let $x$ be an elliptic point of order $2$ or $3$ and $\tau\in(\H^+)^2$ a lift. The mapping $(x,y)\mapsto xB_1+yB_2$ defines a bijection between integral solutions $\phi_\tau(x,y)=N$ and branches of $F_N$ meeting an elliptic divisor over $x$.
\end{proposition}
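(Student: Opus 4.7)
The plan is to factor the claimed bijection as
\[
(x,y) \longmapsto B_{x,y} \coloneqq xB_1+yB_2 \longmapsto \text{branch of } F_N \text{ at the elliptic divisor over } x,
\]
where the first map is the $\Z$-linear isomorphism $\Z^2\xrightarrow{\sim}L_\tau$ coming from the basis $(B_1,B_2)$, and the second is a geometric identification of elements of $L_\tau$ of the prescribed determinant with branches.

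The algebraic half is essentially formal: by the definition of $L_\tau$ (every $B$ satisfies $\tau\in\H_B$) and of $\phi_\tau(x,y)=\det(xB_1+yB_2)\cdot A$, the basis map restricts to a bijection between integer solutions of $\phi_\tau(x,y)=N$ and matrices $B\in L_\tau$ with $\det(B)\cdot A=N$.

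For the geometric half, I would interpret each such $B$ as a branch of $F_N$ at an elliptic divisor over $x=\pi(\tau)$. Since $\tau\in\H_B$, the germ of $\H_B$ at $\tau$ descends to a branch of $F_B\subseteq F_N$ through $x$, and by Proposition~\ref{Prop:HZEllmult} this branch meets the elliptic divisor transversally at a single point. Conversely, any branch of $F_N$ meeting the elliptic divisor lifts through the orbifold cover $(\H^+)^2\to X_g(\C)$ to a germ at $\tau$ of some $\H_B$, forcing $B\in L_\tau$ with $\det(B)A=N$. Distinctness of branches for non-proportional $B$ follows from Lemma~\ref{Lem:transversal}: non-similar matrices in $L_\tau$ yield transverse (hence distinct) tangent directions at $\tau$, which persist as distinct intersection points with the exceptional divisor after blow-up.

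The main obstacle I expect is the bookkeeping around the $\Gamma_\tau$-action on $L_\tau$ together with the primitivity convention implicit in the definition of $F_N$: a priori, branches are parametrised by primitive elements of $L_\tau$ modulo $\Gamma_\tau$, whereas the algebraic side enumerates all integer pairs up to the $(x,y)\leftrightarrow(-x,-y)$ symmetry. I would resolve this using the description of $\Gamma_\tau$ and its tangent action from Proposition~\ref{Prop:elliptic}: for types $2$ and $3^+$ the generator acts as a scalar on $T_\tau(\H^+)^2$ and hence fixes every tangent direction, so the $\Gamma_\tau$-orbit on $L_\tau$ contributes a single branch per primitive element. For type $3^-$ the nontrivial rotation is precisely absorbed by the two exceptional components via the equivariant blow-up already used in the proof of Proposition~\ref{Prop:HZEllmult}, so that distinct $\Gamma_\tau$-orbits of tangent directions correspond to distinct branches across the two components. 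Matching the primitivity condition on both sides then reduces to the observation that $\phi_\tau$ is the norm form of the $\Z$-lattice $L_\tau\subseteq\mathfrak M$, so that primitive pairs $(x,y)$ yield primitive $B_{x,y}$, closing the bijection.
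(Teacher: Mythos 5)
The paper itself offers no argument for this proposition beyond the sentence ``One immediately obtains the following'', so there is no written proof to compare against; your proposal supplies the details the authors regard as immediate, and its skeleton (the $\Z$-linear identification $\Z^2\cong L_\tau$ followed by $B\mapsto\H_B$, with Lemma~\ref{Lem:transversal} and Proposition~\ref{Prop:HZEllmult} controlling the geometry) is certainly the intended one. The algebraic half and the two directions of the geometric correspondence are fine.

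The bookkeeping paragraph, however, does not actually close the count, and this is where the only real content of the statement lies. First, $\H_B=\H_{-B}$, so $(x,y)$ and $(-x,-y)$ always give the same branch; the ``bijection'' can only hold after identifying $(x,y)\sim(-x,-y)$ (or, equivalently, with the paper's convention elsewhere that a ``branch'' is the datum of a matrix $B$). Second, and more substantively, your treatment of type $3^-$ does not work as claimed: if the germ $\H_B$ at $\tau$ is \emph{not} tangent to $w_1=0$ or $w_2=0$, then $B$, $\gamma'^tB\gamma$ and $(\gamma^2)'^tB\gamma^2$ are three non-proportional elements of $L_\tau$ with the same determinant, hence three distinct solutions $(x,y)$, yet the three germs $\H_B,\gamma\H_B,\gamma^2\H_B$ have a single image downstairs; by the computation in the proof of Proposition~\ref{Prop:HZEllmult} that single branch meets the exceptional locus in the one point $E_1\cap E_2$, so the two components do \emph{not} ``absorb'' the rotation into distinct branches. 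The map is $3$-to-$1$ ($6$-to-$1$ with signs) on such orbits, and a correct statement must either count branches upstairs (i.e.\ by matrices) or quotient the solution set by $\Gamma_\tau$. Finally, the primitivity step is justified by the wrong reason: what one needs is that $L_\tau$ is \emph{saturated} in $\mathfrak M$ (true because it is cut out by a linear condition on $\mathfrak M$), so that $\gcd(x,y)=1$ if and only if $xB_1+yB_2\in\mathfrak M_{\mathrm{prim}}$; and one should note that imprimitive solutions $(dx_0,dy_0)$ produce components of $F_{N/d^2}$ rather than of $F_N$, so they must be excluded from, or reinterpreted in, the left-hand side of the claimed bijection.
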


\paragraph{Intersections of Hirzebruch--Zagier divisors.~}Define $T_N=\bigcup_{d^2|N}F_{N/d^2}$ and $T'_N$ as the image of $T_N$ in the singular variety $X_g$.

Since $X_g$ has only quotient singularities, one can, for positive integers $M$ and $N$, define the transversal intersection number $(T_M',T_N')_\mathrm{tr}$ as follows: For a point $x\in X_g$ with isotropy group $\Gamma_x$, fix a lift $\tau\in(\H^+)^2$ and let 
$\mathfrak{I}_x$ be the set of pairs of distinct branches $(\H_B,\H_{B'})$, $B,B'\in\mathfrak{M}_\mathrm{prim}$, such that $\H_B$ (resp.\ $\H_{B'}$) maps to $T_M'$ (resp.\ $T_N'$). Then set $(T_M',T_N')_x=\#\mathfrak{I}_x/\#\Gamma_x$ and $(T_M',T_N')_\mathrm{tr}=\sum_{x\in X_g}(T_M',T_N')_x$.

Let
\begin{align*}
H(n)=&\sum_{d^2|n}h'(-n/d^2),\, n>0\\
H_D^o(n)=&\sum_{\substack{s\in \Z\\ s^2< 4n\\ s^2\equiv 4n\pmod D}} H\left(\frac{4n-s^2}{D}\right)
\end{align*}
and let $h'(n)$ be the weighted class number, i.e. $h(-3)=1/3$, $h(1/4)=1/2$, or otherwise equal to $h(\Q(\sqrt n))$.
Furthermore, for a prime $p|D$, set $P_p=M$ if $v_p(M)\leq v_p(N)$ and $P_p=N$ otherwise.

We recall the following formula, due to Hirzebruch and Zagier when $D$ is prime and $g$ is principal, and Hausmann for the general case:
\begin{proposition}[{\cite[(50)]{HZint}, \cite[(5.7) Satz]{Hausmann}}]\label{Thm:HZintHZ}
 Let $M,N$ be positive integers. Then 
 \[(T_M',T_N')_\mathrm{tr}=\frac{1}{2}\sum_{d|\gcd(M,N)}dH_D^o(MN/d^2)\prod_{p|D}\left(\legendre{D_p}{d}+\legendre{D_p}{P_pA/d}\right)\]
 where $D=\prod_{p|D}D_p$ is a factorisation of $D$ into prime power fundamental discriminants.
\end{proposition}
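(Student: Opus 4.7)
The plan is to follow the orbit-counting scheme of Hirzebruch--Zagier, which was extended to general genera and non-prime discriminants by Hausmann. I sketch the four main steps.

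First I would reinterpret the left-hand side as an orbit count. By Lemma~\ref{Lem:transversal}, two distinct branches $\H_B,\H_{B'}$ meet transversally in at most one point of $(\H^+)^2$. Fixing lifts, one rewrites $(T_M',T_N')_{\mathrm{tr}}$ as a weighted sum over $\PSL(\Lambda)$-orbits of pairs $(B,B')\in\mathfrak{M}\times\mathfrak{M}$ with $\H_B\cap\H_{B'}\neq\varnothing$, where $B$ (resp.\ $B'$) parameterises a branch of $T_M$ (resp.\ $T_N$), each orbit weighted by the reciprocal of its $\PSL(\Lambda)$-stabiliser. The overall factor $1/2$ in the target formula will come from the symmetry $(B,B')\leftrightarrow(B',B)$.

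Next I would translate orbits into classes of binary quadratic forms. To such a pair attach the rank-$2$ sublattice $L=\Z B+\Z B'\subset L_\tau$; the function $\phi_L(x,y)=A\det(xB+yB')$ is a positive definite binary quadratic form (cf.\ Proposition~\ref{Prop:HZintEll}). The $\PSL(\Lambda)$-orbits of pairs then correspond bijectively to $\mathrm{GL}_2(\Z)$-classes of such forms together with marked representations of $M$ and of $N$. The outer summation variable $d\mid\gcd(M,N)$ records the common content of the ordered basis $(B,B')$ under this correspondence: after factoring out $d$, the residual form is primitive and has a well-defined discriminant determined by $MN/d^2$.

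After this reduction, the bulk of the computation is analytic number theory. The Hurwitz--Kronecker class numbers $H(n)=\sum_{d^2\mid n}h'(-n/d^2)$ count primitive representations of integers by positive definite binary forms of a given discriminant, graded by the imprimitivity parameter $d$. Summing over the ``trace'' variable $s$ satisfying $s^2\equiv 4MN/d^2\pmod{D}$ — the trace being the off-diagonal entry of the Gram matrix of the pair — produces the factor $H_D^o(MN/d^2)$, while the factor $d$ in front records the content that was divided out when passing to the primitive form.

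The final and most delicate step is the local analysis at the ramified primes $p\mid D$ arising from the integrality condition $B,B'\in\mathfrak{M}$. By the genus theory of binary quadratic forms, the number of local $p$-adic orbits is exactly $\legendre{D_p}{d}+\legendre{D_p}{P_pA/d}$, the asymmetric choice $P_p=M$ or $N$ reflecting which of $M,N$ has smaller $p$-adic valuation and hence imposes the stronger local constraint. Multiplying these local contributions over $p\mid D$ and summing over $d$ assembles the formula. I expect the main obstacle to lie precisely here: identifying the local conditions cut out by ``integral for the genus $g$'' and matching them with the global Hurwitz class-number count constitutes a non-trivial local-to-global principle for binary quadratic forms, and it is where the bookkeeping of Hausmann's argument (extending the prime-discriminant, principal-genus case of Hirzebruch--Zagier) becomes most intricate.
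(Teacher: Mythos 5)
The first thing to say is that the paper contains no proof of this proposition: it is imported verbatim from the literature, attributed to Hirzebruch--Zagier (prime discriminant, principal genus) and to Hausmann (general case), and the surrounding text only \emph{uses} the formula. So there is no in-paper argument to compare yours against. Your sketch does correctly reconstruct the architecture of the cited proofs: transversality of distinct branches (Lemma~\ref{Lem:transversal}), rewriting $(T_M',T_N')_{\mathrm{tr}}$ as a stabiliser-weighted count of $\PSL(\Lambda)$-orbits of pairs $(B,B')$ consistent with the paper's definition of $(T_M',T_N')_x=\#\mathfrak{I}_x/\#\Gamma_x$, passage to positive definite binary quadratic forms via $\det(xB+yB')$, and a final local analysis at the primes dividing $D$.

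As written, however, this is a plan rather than a proof, and the gap sits exactly where you yourself place it. The identity is carried entirely by two computations that are not performed: (a) the match between the orbit count and the class-number sum $H_D^o(MN/d^2)$, including the identification of the trace variable $s$, the congruence $s^2\equiv 4MN/d^2\pmod D$, and the provenance of the overall factor $\tfrac12$ --- your attribution of that factor to the swap $(B,B')\leftrightarrow(B',B)$ is doubtful, since the pairs in $\mathfrak{I}_x$ are ordered with $B$ attached to $T_M'$ and $B'$ to $T_N'$; the more plausible sources are the sign ambiguity $B\mapsto -B$ fixing $\H_B$ or the symmetry $s\mapsto -s$ in the definition of $H_D^o$; and (b) the local factor $\prod_{p\mid D}\bigl(\legendre{D_p}{d}+\legendre{D_p}{P_pA/d}\bigr)$, which encodes the genus-$g$ integrality condition defining $\mathfrak M$ and is precisely the content of Hausmann's extension beyond the Hirzebruch--Zagier case. (A smaller inaccuracy: $H(n)$ is the Hurwitz class number of \emph{all} forms of discriminant $-n$ weighted by automorphisms, not a count of primitive representations.) Without (a) and (b) the stated formula is not established; since the paper itself treats the proposition as a citation, the honest options are either to cite it as the authors do or to carry out the quadratic-form bookkeeping of Hausmann's argument in full.
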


\begin{lemma}\label{Lem:HZcyc}
Let $x\in X_g$ be an elliptic point of order $2$ or $3$ with isotropy group $\Gamma_x$. Assume that a pair of distinct branches given by the images of $\H_B,\H_B'$, $B,B'\in\mathfrak{M}_\mathrm{prim}$, meet in $x$. Then the intersection number of their strict transforms on the elliptic divisors in $\tilde X_g$ above $x$ is equal to
\begin{enumerate}
    \item $1$, if $x$ is of type $3^-$ and neither $\H_B$ nor $\H_B'$ is preserved by the action of $\Gamma_x$ on $(\H^+)^2$.
    \item $0$, otherwise.
\end{enumerate}
\end{lemma}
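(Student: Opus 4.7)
The plan is to work locally near $x$, exploiting the explicit blow-up/quotient model of the resolution $\tilde X_g \to X_g$ from the proof of Proposition~\ref{Prop:elliptic}. Let $\tau\in(\H^+)^2$ be a lift of $x$ and $\gamma\in\Gamma_x$ a generator of order $r\in\{2,3\}$. For types $2$ and $3^+$, the local model is $V = \Bl_\tau U / \gamma$, smooth because the $\gamma$-fixed locus $E_\tau\subset\Bl_\tau U$ is of codimension one. For type $3^-$ it instead factors as $\Bl := \Bl_{\tilde\tau_1,\tilde\tau_2,\tau}U \xrightarrow{\pi_1} V' \xrightarrow{\pi_2} V$, where $\pi_1$ is the degree-$3$ $\gamma$-quotient and $\pi_2$ is the blow-down of the $(-1)$-curve $E_0' = \pi_1(E_\tau)$ to a point $y\in V$. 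All intersections on $V$ will be reduced to intersections on $\Bl$ (or $\Bl_\tau U$) via the projection formula $r \cdot (F_B,F_{B'})_V = (\pi^* F_B,\pi^* F_{B'})$ for the total degree-$r$ map $\pi$ to $V$.

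For types $2$ and $3^+$ the argument is quick: $d\gamma_\tau = \zeta_r \operatorname{Id}$ acts trivially on the tangent projective line at $\tau$, so $\H_B$ and $\gamma\H_B$ have the same tangent there; Lemma~\ref{Lem:transversal} forces $\gamma\H_B=\H_B$, and analogously for $\H_{B'}$. Hence $\pi^* F_B = \tilde\H_B$ and $\pi^* F_{B'} = \tilde\H_{B'}$. Since $(\H_B,\H_{B'})_U = 1$ by Lemma~\ref{Lem:transversal} (using $F_B\neq F_{B'}$), this single transversal intersection at $\tau$ is cancelled by the blow-up, making the strict transforms disjoint on $\Bl_\tau U$ and giving $(F_B,F_{B'})_V = 0$, in agreement with case (ii).

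For type $3^-$, $d\gamma_\tau = \operatorname{diag}(\zeta_3,\zeta_3^{-1})$ has exactly two fixed directions $\tilde\tau_1,\tilde\tau_2\in E_\tau$, and Lemma~\ref{Lem:transversal} implies that $\H_B$ is $\gamma$-invariant if and only if its tangent direction at $\tau$ lies in $\{\tilde\tau_1,\tilde\tau_2\}$. I split into three sub-cases. If both $\H_B,\H_{B'}$ are invariant, distinctness forces them to have different axes as tangents, so their strict transforms in $V$ are supported on separate exceptional components $E_1,E_2$ and cannot meet. If exactly one, say $\H_B$, is invariant, then the blow-up at $\tilde\tau_1$ detaches $\tilde\H_B$ from $E_\tau$, and a direct calculation shows $\tilde\H_B$ is disjoint from $\sum_k \gamma^k\tilde\H_{B'} + E_\tau$ on $\Bl$, whence again $(F_B,F_{B'})_V = 0$. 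Both these sub-cases give case (ii).

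The interesting case, and the main obstacle, is when neither $\H_B$ nor $\H_{B'}$ is $\gamma$-invariant. Using $F_B\neq F_{B'}$, I will first show that the tangent directions $q,q'$ lie in distinct $\gamma$-orbits on $E_\tau\setminus\{\tilde\tau_1,\tilde\tau_2\}$, so that the six translates $\gamma^k\tilde\H_B,\gamma^k\tilde\H_{B'}$ are pairwise disjoint on $\Bl$. Both strict transforms pass through $y$ with multiplicity $1$ by étale descent of $\pi_1$ near $q,q'$, hence $\pi_1^*\pi_2^* F_B = \sum_k \gamma^k\tilde\H_B + E_\tau$ and similarly for $F_{B'}$. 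Expanding the pairing on $\Bl$, the nine $\tilde\H$-$\tilde\H$ cross-terms vanish, the six terms $(\gamma^i\tilde\H_\bullet,E_\tau)_\Bl$ each equal $1$, and $(E_\tau)^2_\Bl = -3$, producing $3\cdot (F_B,F_{B'})_V = 6-3 = 3$, so $(F_B,F_{B'})_V = 1$. The delicate part is tracking the multiplicity of passage through $y$, the orbit-separation of $q,q'$ extracted from $F_B\neq F_{B'}$, and the way the self-intersection $(E_\tau)^2 = -3$ of the central exceptional divisor interacts with the six $\gamma$-translates.
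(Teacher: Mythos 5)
Your proof is correct and follows essentially the same route as the paper's: both work on the explicit local resolution model from Proposition~\ref{Prop:elliptic} and use Lemma~\ref{Lem:transversal} to rule out tangencies, with your projection-formula computation $6-3=3$ in the $3^-$ case being a more explicit version of the paper's observation that the two curves meet the interior of $E_0'$ and acquire a single transversal intersection after its contraction. If anything you are slightly more careful than the paper, which leaves implicit that the tangent directions of the two non-invariant branches lie in distinct $\gamma$-orbits (so that the curves meet $E_0'$ at distinct points and the resulting intersection at the contracted point is exactly $1$).
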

\begin{proof}
    If $x$ is of type $2$ or $3^+$, this follows directly from Lemma~\ref{Lem:transversal} after blowing up in $x$. Assume that $x$ is of type $3^-$. If one of the branches is preserved by $\Gamma_z$, i.e. tangential to $w_1=0$ or $w_2=0$ as in the proof of Proposition~\ref{Prop:elliptic}, that branch will intersect the elliptic divisors above $x$ in either $E_1$ or $E_2$ but not both. If neither branch is preserved by $\Gamma_x$, then they intersect the exceptional divisor of Figure~\ref{fig:3minus} in the interior of $E_0'$, which after contraction of $E_0'$ becomes a single point of intersection.
\end{proof}

A recursive application of Proposition~\ref{Thm:HZintHZ} and Lemma~\ref{Lem:HZcyc} determines the transversal intersections of $F_M$ and $F_N$.

\begin{remark}
We stop short of giving a component-wise analysis of the intersections of Hirzebruch--Zagier divisors in non-elliptic special points. This is because the small divisors we consider ($F_N,\, N\leq 10$) turn out to have intersections supported in the elliptic points, which are already accounted for. 
\end{remark}

\paragraph{Proof of Theorem~\ref{Thm:intersection}}
One first computes the list of cusp, elliptic and Hirzebruch--Zagier divisors. Details of the computation are listed in the tables of Appendix~\ref{Sec:computations}. The intersections between these divisors are obtained by applying Propositions~\ref{Prop:HZintCusp}, \ref{Prop:HZEllmult}, \ref{Prop:HZintEll} and \ref{Thm:HZintHZ}.\qed

\section{Hilbert Property for Hilbert modular surfaces}\label{Sec:main}
We prove the following finer version of Theorem~\ref{Thm:K3}:
\begin{theorem}\label{Thm:K3regular}
     Let $X/\Q$ be the Hilbert modular surface of discriminant $D\in \cS_+$ (resp.\ $\cS_-$) and principal genus (resp.\ non-principal genus) at base level. Then the natural image of $X(\Q(t))$ in $X_{\bar \Q(t)}$ is non-thin.
\end{theorem}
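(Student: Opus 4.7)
The plan is to apply Corollary~\ref{Cor:Hilbertellpticregular} to the minimal smooth projective model $Y_g$ of $X$, which is a K3 surface over $\Q$ by Theorem~\ref{Thm:kodaira}. Since $X$ and $Y_g$ are birationally equivalent and non-thinness is a birational invariant of smooth proper surfaces, it suffices to prove the statement with $X$ replaced by $Y_g$. Three conditions must be verified: (a) at least two $\Q$-defined genus~$1$ fibrations $\pi_i\colon Y_g \to \P^1_\Q$ distinct up to isomorphism of the base; (b) simple connectedness of $Y_{g,\bar\Q}\setminus Z$, where $Z$ is the over-exceptional divisor; (c) Zariski density of the image of non-constant $\Q(t)$-points in $Y_{g,\Q(t)}$.

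For conditions (a) and (b), the inputs are the intersection graphs assembled in Theorem~\ref{Thm:intersection}. Each graph, for $D \in \cS_+ \cup \cS_-$, contains several extended simply-laced Dynkin subdiagrams ($\tilde A_n$, $\tilde D_n$, $\tilde E_6$, $\tilde E_7$, $\tilde E_8$) built from divisors defined over $\Q$: cusp divisors (Proposition~\ref{Prop:CoverQ}), Hirzebruch--Zagier divisors over $\Q$ via Theorem~\ref{Thm:HZoverQ} combined with Remark~\ref{Rem:HZoverQ}, and $\Q$-rational elliptic divisors where applicable. In particular, the resolution over any single cusp is already a $\tilde A_{n-1}$-cycle over $\Q$. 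Each such $\Q$-configuration induces a $\Q$-defined genus~$1$ fibration $Y_g \to \P^1_\Q$. A case-by-case inspection, for each of the $14$ discriminants involved, produces at least two non-isomorphic such fibrations. The over-exceptional divisor $Z$ is computed explicitly by intersecting the sets of vertical divisors of the chosen $\pi_i$; in each case the result is a disjoint union of non-extended Dynkin chains of type $A$, $D$, or $E$. Simple connectedness of $Y_{g,\bar\Q}\setminus Z$ then follows from Lemma~\ref{Lem8} applied to one of the $\pi_i$, provided (i) there exists a fiber not of type $I_n$ disjoint from $Z$ and (ii) every fiber contains a multiplicity-$1$ component outside $Z$. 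Both are verifiable by direct graph inspection.

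The main obstacle is condition (c). Each smooth rational curve on $Y_g$ defined over $\Q$ --- and the special divisors above already provide many --- yields a non-constant $\Q(t)$-point of $Y_g$. To upgrade this to Zariski density, we exploit one of the fibrations $\pi_i$ together with a $\Q$-rational multi-section chosen among the special divisors: this forces the Jacobian of the generic fiber of $\pi_i$ to be an elliptic curve of positive Mordell--Weil rank over $\Q(t)$, producing infinitely many $\Q(t)$-sections. Each such section is a rational curve on $Y_g$ over $\Q$, hence a non-constant $\Q(t)$-point; varying over the independent sections spreads these points across the fibration. Combined with the analogous construction for a second fibration --- whose fibers meet those of the first transversally --- the resulting $\Q(t)$-points cannot all lie in a proper closed $\Q(t)$-subscheme of $Y_{g,\Q(t)}$, since the corresponding rational curves sweep out a Zariski-dense subset of $Y_g$. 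The delicate point, which we anticipate handling case-by-case from the explicit structure of the Néron--Severi lattice spanned by cusp, elliptic, and Hirzebruch--Zagier divisors, is the verification of positive Mordell--Weil rank for at least one fibration per discriminant. Once condition (c) is established, Corollary~\ref{Cor:Hilbertellpticregular} applies and yields the theorem.
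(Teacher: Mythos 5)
Your proposal follows the same route as the paper: pass to the minimal model $Y_g$, extract genus $1$ fibrations from the intersection graphs of Theorem~\ref{Thm:intersection}, check simple connectedness of the complement of the over-exceptional divisor via Lemma~\ref{Lem8}, and obtain density of non-constant $\Q(t)$-points from a fibration whose generic fibre has positive Mordell--Weil rank, so that Corollary~\ref{Cor:Hilbertellpticregular} applies. The skeleton is correct, and the case-by-case work you defer is exactly what Table~\ref{case} and Appendix~\ref{Sec:graphs} carry out.

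The one step that would fail as stated is your mechanism for condition (c): a $\Q$-rational multi-section does \emph{not} force the Jacobian of the generic fibre to have positive Mordell--Weil rank. A degree-$d$ multi-section only produces (by summing over the fibre) a single $\Q(u)$-point of the Jacobian, which may well be torsion or zero; every genus $1$ fibration on a projective surface admits multi-sections, including the rank-$0$ ones. The paper instead exhibits, for each discriminant, two $(-2)$-curves $\sigma_0,\sigma_1$ that are honest \emph{sections} of one of the chosen fibrations and shows their difference is non-torsion, either because the two sections intersect or because the fibration has a fibre of type $II$ (which rules out non-trivial torsion sections by the height formula). You do flag the rank verification as the delicate case-by-case point, so the gap is localised, but the criterion you would need from the N\'eron--Severi lattice is precisely this section-based one, not the multi-section heuristic. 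Two minor further remarks: a single fibration of positive rank already yields density (the second fibration you invoke for (c) is superfluous, though a second fibration is of course still needed to apply the corollary at all); and for $D=44$ the verification of the hypotheses of Lemma~\ref{Lem8} requires a small extra argument because $F_5^1$ and $F_5^2$ cannot be ruled out of $Z$.
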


\begin{proof}[Proof of Theorem~\ref{Thm:K3regular}]
    We apply Corollary~\ref{Cor:Hilbertellpticregular} to the surfaces $Y_g$ from \Cref{Thm:K3}. Using the results of \S\ref{Sec:cycles}, we find $(-2)$-curves and cuspidal elliptic fibres among the cusp divisors, elliptic divisors and Hirzebruch--Zagier divisors. The general outline of the argument is the same for all $D$ and $g$ in the theorem as follows (see Table~\ref{case} for each individual case).
    
    Let $\Gamma$ be the intersection graph of these curves, see Appendix~\ref{Sec:graphs}. Looking at the elliptic configurations in the graph, one infers that none of the divisors displayed, except possibly the ones in bold, are contained in the over-exceptional divisor $Z$.
    
    We exhibit a simply connected elliptic configuration $G \leq \Gamma$, none of whose components lies in $Z$. This gives a genus $1$ fibration $|G|:Y\to\P^1_\Q$. We then exhibit a second elliptic configuration $G'\leq \Gamma$ containing a section of $|G|$ (this section is listed as the first component in Table~\ref{case}). No $(-2)$-curve intersecting $G'$ is contained in $Z$, except possibly $F_5^1$ and $F_5^2$ in the case $D=44$: this is clear for curves that intersect $G'$ properly, and at the same time no component of $G'$ is contained in $Z$, after the exceptions made.

    We then exhibit two $(-2)$-curves $\sigma_0,\sigma_1$, which are sections of $|G|$ for all cases except $D=21$, where they are instead sections of $|G'|$. In every case, their difference is non-torsion, since either the fibration admits a fiber of type $II$ or the two sections intersect. Hence the fibration is elliptic and has Mordell--Weil rank $\geq 1$, and the non-constant elements in $Y(\Q(t))$ are Zariski--dense in $Y_{\Q(t)}$. 

    The complement $X_{\bar k}\setminus Z$ is simply connected by Lemma \ref{Lem8} applied to the fibration $|G|$. We need to verify the hypotheses. While (i) clearly holds, for (ii) we take the following component of a fiber of $|G|$.
    \begin{itemize}
        \item For $D \neq 44$, or for $D=44$ when the fiber of $|G|$ does not contain $F_5^1$ or $F_5^2$,
        we take any fiber component intersecting the section of $|G|$ contained in $G'$: this component is not contained in $Z$ as it intersects $G'$, and it has multiplicity $1$ as it is a fiber component intersecting a section.
        \item For $D=44$ and the fiber $\{F_5^1,E_3,F_9,E_5\}$ (the one containing $F_5^1$), we take the component $F_9$.
        \item The curve $F_5^2$ is not vertical for $|G|$ for $D=44$.
    \end{itemize}
    All assumptions of Corollary~\ref{Cor:Hilbertellpticregular} now hold, concluding the proof.
\end{proof}
    
\begin{remark}
    The Hilbert Property for all rational Hilbert modular surfaces (as listed in \cite[Theorem V.3.3]{VDG}) remains open. Those for which explicit models were computed in \cite{EK} are $\Q$-rational and so classical Hilbert Irreducibility applies. We do not know whether all rational Hilbert modular surfaces are $\Q$-rational.
\end{remark}

\vspace{-1mm}

\begin{proof}[Proof of Theorem~\ref{Thm:IGP}]
Elkies and Kumar \cite{EK} show that the surface $Y_g(D)$ is $\Q$-rational for $D=5,8,13,17$ and $g$ principal (there is only one genus for these discriminants) and $D=12$ and $g$ non-principal. For such a $D$, Theorem \ref{Thm:cover} applied to the $\Q$-rational modular surface $Y_g(D)$ gives a geometrically integral $\PSL_2(\F_{p^2})$-cover $\pi:Z \to \P^2_\Q$. By Bertini's theorem, the specialisation of $\pi$ over a line $l$ stays geometrically irreducible for all $l$ in some nonempty Zariski--open $U$ of the dual space $(\P^2_\Q)^*$. Any such specialisation provides a regular $\PSL_2(\F_{p^2})$-representation.

Now for $D \in \cS_+\cup \cS_-$, we get a regular $\PSL_2(\F_{p^2})$-representation for all $p$ with $\legendre{D}{p}=-1$ by combining Theorem \ref{Thm:cover} and Theorem \ref{Thm:K3regular}.

Combining the above, we proved the theorem for all $p$ for which there exists $D_p$ in
\[
\cD \coloneqq \{5,8,12,13,17,21,24,28,29,33,37,40,41,44,56,57,69,105\}
\]
with $\legendre{D_p}{p}=-1$. One easily checks by hand that this includes all primes $p \leq 41$. For $p > 41$, such a $D_p$ exists unless
\begin{center}
    $\legendre{D}{p}=1$ for all $D \in \cD$,
\end{center}
or, equivalently, unless $\legendre{D}{p}=1$ for all $D$ in the multiplicative subgroup $\langle \cD \rangle \subset \Q^*$ generated by $\cD$. Rewriting $\cD$ as
\[
\{5,2^3,2^2 \cdot 3,13,17,3 \cdot 7,2^3 \cdot 3,2^2 \cdot 7,29,3 \cdot 11,37,2^3 \cdot 5,41,2^2 \cdot 11,2^3 \cdot 7,3\cdot 19,3 \cdot 23,3 \cdot 5 \cdot 7\}
\]
one sees that $\langle \cD \rangle=\langle \cL \rangle$, where $\cL=\{\ell \text{ prime}: \ell \leq 41, \ell \neq 31\}$ is the list of prime factors appearing in $\cD$. So $D_p$ exists unless $\legendre{\ell}{p}=1$ for all $\ell$, concluding the proof.
\end{proof}

\vspace{-.2cm}
\small
\begin{table}[h!]
\caption{Genus $1$ fibrations in the proof of Theorem~\ref{Thm:K3regular}.}
\label{case}
\begin{tabular}{ |Sr|Sl|Sc|}
\hline 
\multicolumn{1}{|c|}{$D$} & \multicolumn{1}{c|}{$G,G'$} & $\sigma_0,\sigma_1$ \\
\hline\hline
\multicolumn{3}{|c|}{non-principal $g$}\\
\hline
$21$ & \makecell{$\{E_3 ,E_4, F_5^1,C_2,C_1,E_1^+,C_6\}$, $\{F_5^2,C_5,C_6\}$} & $C_1$, $C_4$ \\
\hline
$24$ & \makecell{$\{F_5, C_6, C_8, E_3, E_4\}$, $\{C_1, C_5\}$} & $C_1$, $C_5$ \\
\hline
$28$ & \makecell{$\{F_6,C_3, C_5, C_8, C_{10}\}$, $\{C_1, C_2, C_6, C_7, E_1, E_2^+\}$} & $C_1$, $C_7$ \\
\hline
$33$ & \makecell{$\{F_6, C_3, C_6, C_9, C_{12}\}$, $\{C_1, C_8\}$} & $C_1$, $C_8$ \\
\hline
$40$ & \makecell{$\{F_{10}, E_2, E_4, F_8, E_2^+\}$,$\{C_2, C_2'\}$} & $C_2$, $C_2'$ \\
\hline \hline 
\multicolumn{3}{|c|}{principal $g$}\\
\hline
$29$ & \makecell{$\{C_1\}$, $\{E_2^+, E_3^+, F_6\}$} & $E_2^+$, $E_3^+$ \\
\hline
$37$ & \makecell{$\{C_7\}$,$\{C_1, C_6, E_3^+\}$} & $C_1, C_6$ \\
\hline
$40$ & \makecell{$\{C_1\}$, $\{C_2, C_2', C_6, C_6', F_6\}$} & $C_2$, $C_6$ \\
\hline
$41$ & \makecell{$\{C_{11}\}$, $\{C_1, F_8, C_{10}, F_{10}\}$} & $C_1, C_{10}$ \\
\hline
$44$ & \makecell{$\{C_1\}$, $\{C_2, C_3, F_5^2, C_5, C_6, F_5^1\}$} & $C_2, C_6$ \\
\hline
$56$ & \makecell{$\{C_2\}$, $\{C_1, C_3\}$} & $C_1, C_3$ \\
\hline
$57$ & \makecell{$\{C_5\}$, $\{C_4, F_6, C_6, C_7, C_8, C_2, C_3\}$} & $C_4, C_6$ \\
\hline
$69$ & \makecell{$\{C_2\}$, $\{E_8^+,E_5,E_9^+,E_6,F_6\}$} & $C_1$, $C_3$ \\
\hline 
$105$ & \makecell{$\{C_6\}$, $\{C_1,C_2,C_4,C_5\}$} & $C_1$, $C_5$ \\
\hline 
\end{tabular}
\end{table}
\normalsize

\pagebreak
\appendix
\captionsetup[subfigure]{labelformat=empty}
\section{Intersection diagrams of divisors on $Y_g$}
\label{Sec:graphs}
The diagrams are computed with the help of Appendix~\ref{Sec:computations} using the results of \S\ref{Sec:cycles}.

A square box indicates that the divisor is an irreducible elliptic configuration. All other divisor are smooth rational. The naming conventions are:
\begin{itemize}
\item A cycle of cusp divisors\footnote{\label{nitpicking}strictly speaking, a curve which is the image of such a divisor under the blowdown morphism $\tilde X_g\to Y_g$.} is labelled $C_1,C_2,\dots$. If there is a second cusp, the divisors in its resolution cycle are apostrophised.
\item Elliptic divisors\textsuperscript{\ref{nitpicking}} of type $2$ are labelled $E_1,E_2,\dots$. Elliptic divisors of type $3^+$ are labelled $E_1^+,E
_2^+,\ldots$. Elliptic divisors of type $3^-$ are labelled as $E_1^{-},E_1^{-'},E_2^{-},E_2^{-'},\ldots$
\item The geometrically irreducible components of the Hirzebruch--Zagier divisors\textsuperscript{\ref{nitpicking}} $F_N$, $N\leq 10$, are labelled $F_N^1, F_N^2, \dots$.
\end{itemize}

Elliptic divisors that do not intersect the cusp and Hirzebruch--Zagier divisors (with $N \leq 10$) nor any other elliptic divisor (except for elliptic divisors in the same resolution of type $3^-$) are omitted. Divisors displayed in bold may lie in the over-exceptional divisor (see proof of Theorem~\ref{Thm:K3regular}). Code to reproduce these diagrams is available at \href{https://github.com/dgvirtz/hilbmodcycles}{github.com/dgvirtz/hilbmodcycles}.

\begin{figure}[H]
\centering
\begin{subfigure}{.45\textwidth}
\centering
\[\begin{tikzcd}[cramped, sep=tiny]
	& {C_2} & {E_3} & {C_1} & {C_6} \\
	{F_5^1} && {F_6} & {E_1^+} && {F_5^2} \\
	& {C_3} & {E_4} & {C_4} & {C_5}
	\arrow[curve={height=-12pt}, no head, from=1-2, to=1-4]
	\arrow[no head, from=1-2, to=3-2]
	\arrow[no head, from=1-3, to=2-1]
	\arrow[no head, from=1-3, to=2-6]
	\arrow[no head, from=1-4, to=1-5]
	\arrow[curve={height=-12pt}, no head, from=1-4, to=3-4]
	\arrow[no head, from=1-5, to=2-6]
	\arrow[no head, from=1-5, to=3-5]
	\arrow[no head, from=2-1, to=1-2]
	\arrow[no head, from=2-1, to=3-2]
	\arrow[no head, from=2-3, to=1-3]
	\arrow[equals, from=2-3, to=2-4]
	\arrow[no head, from=2-3, to=3-3]
	\arrow[no head, from=2-4, to=1-4]
	\arrow[no head, from=2-4, to=3-4]
	\arrow[no head, from=2-6, to=3-5]
	\arrow[curve={height=12pt}, no head, from=3-2, to=3-4]
	\arrow[no head, from=3-3, to=2-1]
	\arrow[no head, from=3-3, to=2-6]
	\arrow[no head, from=3-5, to=3-4]
\end{tikzcd}\]
\caption{$D=21$ (non-principal)}
\end{subfigure}
\hfill
\begin{subfigure}{.45\textwidth}
\centering
\[\begin{tikzcd}[cramped, sep=tiny]
	& {C_2} & {C_1} && {C_5} & {C_6} \\
	{C_3} & {F_5^2} & {E_3} & {F_8} & {E_4} & {F_5^1} & {C_7} \\
	& {C_4} & {E_1} & {F_6} & {E_2} & {C_8} \\
	&& {E_3^-} && {{E_3^-}'}
	\arrow[no head, from=1-2, to=2-1]
	\arrow[no head, from=1-3, to=1-2]
	\arrow[equals, from=1-3, to=1-5]
	\arrow[no head, from=1-3, to=2-4]
	\arrow[no head, from=1-3, to=3-6]
	\arrow[no head, from=1-5, to=2-4]
	\arrow[no head, from=1-6, to=1-5]
	\arrow[no head, from=2-1, to=3-2]
	\arrow[no head, from=2-2, to=1-2]
	\arrow[no head, from=2-2, to=2-3]
	\arrow[curve={height=12pt}, no head, from=2-2, to=2-5]
	\arrow[no head, from=2-3, to=2-4]
	\arrow[curve={height=12pt}, no head, from=2-3, to=2-6]
	\arrow[no head, from=2-4, to=4-5]
	\arrow[no head, from=2-5, to=2-4]
	\arrow[no head, from=2-5, to=2-6]
	\arrow[no head, from=2-6, to=1-6]
	\arrow[no head, from=2-6, to=3-6]
	\arrow[no head, from=2-7, to=1-6]
	\arrow[no head, from=3-2, to=1-5]
	\arrow[no head, from=3-2, to=2-2]
	\arrow[no head, from=3-4, to=2-1]
	\arrow[no head, from=3-4, to=2-4]
	\arrow[no head, from=3-4, to=2-7]
	\arrow[no head, from=3-4, to=3-3]
	\arrow[no head, from=3-4, to=3-5]
	\arrow[no head, from=3-6, to=2-7]
	\arrow[no head, from=4-3, to=2-4]
	\arrow[no head, from=4-3, to=4-5]
\end{tikzcd}\]
\caption{$D=24$ (non-principal)}
\end{subfigure}
\begin{subfigure}{.45\textwidth}
\centering
\[\begin{tikzcd}[cramped,sep=tiny]
	& {C_{10}} & {C_1} && {C_2} & {C_3} \\
	{C_9} & {F_7^2} & {E_2^+} & {F_6} & {E_1^+} & {F_7^1} & {C_4} \\
	{E_1} & {C_8} & {C_7} & {F_{10}} & {C_6} & {C_5} & {E_2} \\
	&&& {E_3}
	\arrow[no head, from=1-2, to=1-3]
	\arrow[no head, from=1-3, to=1-5]
	\arrow[curve={height=12pt}, no head, from=1-3, to=3-3]
	\arrow[no head, from=1-5, to=1-6]
	\arrow[curve={height=-12pt}, no head, from=1-5, to=3-5]
	\arrow[no head, from=1-6, to=2-7]
	\arrow[no head, from=2-1, to=1-2]
	\arrow[no head, from=2-1, to=2-2]
	\arrow[no head, from=2-2, to=2-3]
	\arrow[no head, from=2-2, to=3-1]
	\arrow[curve={height=-12pt}, no head, from=2-2, to=4-4]
	\arrow[no head, from=2-3, to=1-5]
	\arrow[no head, from=2-3, to=3-5]
	\arrow[no head, from=2-4, to=1-2]
	\arrow[no head, from=2-4, to=1-6]
	\arrow[no head, from=2-4, to=3-6]
	\arrow[no head, from=2-5, to=1-3]
	\arrow[no head, from=2-5, to=2-6]
	\arrow[no head, from=2-5, to=3-3]
	\arrow[no head, from=2-6, to=2-7]
	\arrow[no head, from=2-6, to=3-7]
	\arrow[no head, from=2-7, to=3-6]
	\arrow[no head, from=3-2, to=2-1]
	\arrow[no head, from=3-2, to=2-4]
	\arrow[no head, from=3-3, to=3-2]
	\arrow[equals, from=3-4, to=2-3]
	\arrow[equals, from=3-4, to=2-4]
	\arrow[equals, from=3-4, to=2-5]
	\arrow[curve={height=-12pt}, no head, from=3-5, to=3-3]
	\arrow[no head, from=3-6, to=3-5]
	\arrow[curve={height=-12pt}, no head, from=4-4, to=2-6]
\end{tikzcd}\]
\caption{$D=28$ (non-principal)}
\end{subfigure}
\hfill
\begin{subfigure}{.45\textwidth}
\centering
\[\begin{tikzcd}[cramped,sep=tiny]
	& {C_2} & {E_5} & {E_3^+} & {C_3} \\
	{\boxed{F_9}} & {\boxed{C_1}} & {F_5} & {F_6} & {F_7} \\
	& {C_5} & {E_3} & {E_2^+} & {C_4}
	\arrow[curve={height=-12pt}, no head, from=1-2, to=1-5]
	\arrow[no head, from=1-4, to=2-5]
	\arrow[curve={height=-12pt}, no head, from=1-4, to=3-4]
	\arrow[no head, from=2-1, to=1-3]
	\arrow[equals, from=2-1, to=2-2]
	\arrow[curve={height=-18pt}, equals, from=2-1, to=2-5]
	\arrow[no head, from=2-1, to=3-3]
	\arrow[no head, from=2-2, to=1-2]
	\arrow[no head, from=2-2, to=1-4]
	\arrow[no head, from=2-2, to=3-2]
	\arrow[no head, from=2-2, to=3-4]
	\arrow[no head, from=2-3, to=1-2]
	\arrow[no head, from=2-3, to=1-3]
	\arrow[no head, from=2-3, to=3-2]
	\arrow[no head, from=2-3, to=3-3]
	\arrow[no head, from=2-4, to=1-3]
	\arrow[no head, from=2-4, to=1-4]
	\arrow[no head, from=2-4, to=3-3]
	\arrow[no head, from=2-4, to=3-4]
	\arrow[no head, from=2-5, to=1-5]
	\arrow[curve={height=12pt}, no head, from=3-2, to=3-5]
	\arrow[no head, from=3-4, to=2-5]
	\arrow[curve={height=18pt}, no head, from=3-5, to=1-5]
	\arrow[no head, from=3-5, to=2-5]
\end{tikzcd}\]
\caption{$D=29$ (principal)}
\end{subfigure}
\end{figure}
\begin{figure}[H]
\begin{subfigure}{.45\textwidth}
\centering
\[\begin{tikzcd}[cramped,sep=tiny]
	{C_{11}} & {C_{12}} & {C_1} && {C_2} & {C_3} & {C_4} \\
	& {F_8^2} && {F_6} && {F_8^1} \\
	{C_{10}} & {C_9} & {C_8} && {C_7} & {C_6} & {C_5}
	\arrow[no head, from=1-1, to=1-2]
	\arrow[no head, from=1-2, to=1-3]
	\arrow[no head, from=1-3, to=1-5]
	\arrow[equals, from=1-3, to=3-3]
	\arrow[no head, from=1-5, to=1-6]
	\arrow[shift left, equals, from=1-5, to=3-5]
	\arrow[no head, from=1-6, to=1-7]
	\arrow[no head, from=1-7, to=3-7]
	\arrow[no head, from=2-2, to=1-1]
	\arrow[no head, from=2-2, to=1-5]
	\arrow[no head, from=2-2, to=3-1]
	\arrow[no head, from=2-2, to=3-5]
	\arrow[no head, from=2-4, to=1-2]
	\arrow[no head, from=2-4, to=1-6]
	\arrow[no head, from=2-4, to=3-2]
	\arrow[no head, from=2-4, to=3-6]
	\arrow[no head, from=2-6, to=1-3]
	\arrow[no head, from=2-6, to=1-7]
	\arrow[no head, from=2-6, to=3-3]
	\arrow[no head, from=2-6, to=3-7]
	\arrow[no head, from=3-1, to=1-1]
	\arrow[no head, from=3-2, to=3-1]
	\arrow[no head, from=3-3, to=3-2]
	\arrow[no head, from=3-5, to=3-3]
	\arrow[no head, from=3-6, to=3-5]
	\arrow[no head, from=3-7, to=3-6]
\end{tikzcd}\]
\caption{$D=33$ (non-principal)}
\end{subfigure}
\hfill
\begin{subfigure}{.45\textwidth}
\centering
\[\begin{tikzcd}[cramped,sep=tiny]
	{C_1} && {E_2^+} & {C_2} & {C_3} \\
	{\boxed{C_7}} & {F_9} & {F_{10}} & {E_3^+} & {F_7} \\
	{C_6} && {E_4^+} & {C_5} & {C_4}
	\arrow[curve={height=-12pt}, no head, from=1-1, to=1-4]
	\arrow[curve={height=-18pt}, no head, from=1-1, to=3-1]
	\arrow[no head, from=1-3, to=2-5]
	\arrow[curve={height=-12pt}, no head, from=1-3, to=3-3]
	\arrow[no head, from=1-4, to=1-5]
	\arrow[curve={height=-12pt}, no head, from=1-5, to=3-5]
	\arrow[no head, from=2-1, to=1-1]
	\arrow[equals, from=2-1, to=2-2]
	\arrow[no head, from=2-2, to=1-5]
	\arrow[equals, from=2-2, to=2-3]
	\arrow[no head, from=2-2, to=3-5]
	\arrow[no head, from=2-3, to=1-3]
	\arrow[equals, from=2-3, to=2-4]
	\arrow[no head, from=2-3, to=3-3]
	\arrow[no head, from=2-4, to=1-1]
	\arrow[no head, from=2-4, to=3-1]
	\arrow[no head, from=2-5, to=1-4]
	\arrow[no head, from=2-5, to=3-4]
	\arrow[no head, from=3-1, to=2-1]
	\arrow[no head, from=3-3, to=2-5]
	\arrow[curve={height=-12pt}, no head, from=3-4, to=3-1]
	\arrow[no head, from=3-5, to=3-4]
\end{tikzcd}\]
\caption{$D=37$ (principal)}
\end{subfigure}
\end{figure}
\begin{figure}[H]
\begin{subfigure}{.45\textwidth}
\centering
\[\begin{tikzcd}[cramped,sep=tiny]
	{C_3} & {C_2} && {F_6} && {C_2'} & {C_3'} \\
	{C_4} & {F_9^2} & {\boxed{C_1}} && {\boxed{C_1'}} & {F_9^1} & {C_4'} \\
	{C_5} & {C_6} & {E_2} & {F_{10}} & {E_4} & {C_6'} & {  C_5'}
	\arrow[no head, from=1-1, to=1-2]
	\arrow[no head, from=1-1, to=2-2]
	\arrow[no head, from=1-2, to=2-3]
	\arrow[no head, from=1-4, to=1-2]
	\arrow[no head, from=1-4, to=1-6]
	\arrow[curve={height=18pt}, no head, from=1-4, to=3-2]
	\arrow[curve={height=-18pt}, no head, from=1-4, to=3-6]
	\arrow[no head, from=1-6, to=1-7]
	\arrow[no head, from=1-7, to=2-7]
	\arrow[no head, from=2-1, to=1-1]
	\arrow[curve={height=-18pt}, equals, from=2-2, to=2-5]
	\arrow[equals, from=2-3, to=2-5]
	\arrow[curve={height=-18pt}, equals, from=2-3, to=2-6]
	\arrow[no head, from=2-3, to=3-2]
	\arrow[no head, from=2-3, to=3-4]
	\arrow[no head, from=2-5, to=1-6]
	\arrow[no head, from=2-5, to=3-4]
	\arrow[no head, from=2-6, to=1-7]
	\arrow[no head, from=2-6, to=3-7]
	\arrow[no head, from=2-7, to=3-7]
	\arrow[no head, from=3-1, to=2-1]
	\arrow[no head, from=3-1, to=2-2]
	\arrow[no head, from=3-2, to=3-1]
	\arrow[no head, from=3-3, to=3-4]
	\arrow[no head, from=3-4, to=2-1]
	\arrow[no head, from=3-4, to=2-7]
	\arrow[no head, from=3-4, to=3-5]
	\arrow[no head, from=3-6, to=2-5]
	\arrow[no head, from=3-7, to=3-6]
\end{tikzcd}\]
\caption{$D=40$ (principal)}
\end{subfigure}
\hfill
\begin{subfigure}{.45\textwidth}
\centering
\[\begin{tikzcd}[cramped,sep=tiny]
	& {C_1} & {C_4} & {C_3} \\
	& {E_2} & {C_2} & {E_1} \\
	{E_2^+} & {F_{10}} & {F_8} & {F_5} & {E_1^+} \\
	& {E_4} & {C_2'} & {E_3} \\
	& {C_3'} & {C_4'} & {C_1'}
	\arrow[no head, from=1-2, to=1-3]
	\arrow[no head, from=1-3, to=1-4]
	\arrow[no head, from=1-4, to=2-3]
	\arrow[no head, from=2-2, to=3-2]
	\arrow[no head, from=2-3, to=1-2]
	\arrow[shift left, curve={height=-6pt}, no head, from=2-3, to=4-3]
	\arrow[shift right, curve={height=6pt}, no head, from=2-3, to=4-3]
	\arrow[no head, from=2-4, to=3-4]
	\arrow[no head, from=3-1, to=1-2]
	\arrow[no head, from=3-1, to=5-2]
	\arrow[no head, from=3-2, to=3-1]
	\arrow[no head, from=3-2, to=3-3]
	\arrow[curve={height=18pt}, no head, from=3-2, to=3-5]
	\arrow[no head, from=3-2, to=4-2]
	\arrow[no head, from=3-3, to=2-3]
	\arrow[no head, from=3-3, to=2-4]
	\arrow[no head, from=3-3, to=4-3]
	\arrow[no head, from=3-3, to=4-4]
	\arrow[no head, from=3-4, to=1-3]
	\arrow[no head, from=3-4, to=5-3]
	\arrow[no head, from=3-5, to=1-4]
	\arrow[no head, from=4-3, to=5-2]
	\arrow[no head, from=4-4, to=3-4]
	\arrow[no head, from=5-2, to=5-3]\hfill
	\arrow[no head, from=5-3, to=5-4]
	\arrow[no head, from=5-4, to=3-5]
	\arrow[no head, from=5-4, to=4-3]
\end{tikzcd}\]
\caption{$D=40$ (non-principal)}
\end{subfigure}
\end{figure}

\begin{figure}[H]
\centering
\begin{subfigure}{.45\textwidth}
\[\begin{tikzcd}[cramped, sep=tiny]
	&& {\boxed{C_{11}}} \\
	{C_{10}} && {F_{10}} && {C_1} \\
	{C_9} && {F_9} && {C_2} \\
	{C_8} & {E_3} & {F_8} & {E_7} & {C_3} \\
	{C_7} & {C_6} & {F_5} & {C_5} & {C_4}
	\arrow[equals, from=1-3, to=2-3]
	\arrow[no head, from=1-3, to=2-5]
	\arrow[shift right, curve={height=6pt}, no head, from=1-3, to=3-3]
	\arrow[shift left, curve={height=-6pt}, no head, from=1-3, to=3-3]
	\arrow[no head, from=2-1, to=1-3]
	\arrow[no head, from=2-3, to=2-1]
	\arrow[no head, from=2-3, to=2-5]
	\arrow[no head, from=2-3, to=4-2]
	\arrow[no head, from=2-3, to=4-4]
	\arrow[no head, from=2-3, to=5-2]
	\arrow[no head, from=2-3, to=5-4]
	\arrow[no head, from=2-5, to=3-5]
	\arrow[no head, from=3-1, to=2-1]
	\arrow[no head, from=3-3, to=4-2]
	\arrow[no head, from=3-3, to=4-4]
	\arrow[no head, from=3-5, to=4-5]
	\arrow[no head, from=4-1, to=3-1]
	\arrow[no head, from=4-2, to=5-3]
	\arrow[no head, from=4-3, to=2-1]
	\arrow[no head, from=4-3, to=2-5]
	\arrow[equals, from=4-3, to=3-3]
	\arrow[no head, from=4-3, to=5-1]
	\arrow[no head, from=4-3, to=5-5]
	\arrow[no head, from=4-5, to=5-5]
	\arrow[no head, from=5-1, to=4-1]
	\arrow[no head, from=5-2, to=5-1]
	\arrow[curve={height=-12pt}, no head, from=5-3, to=3-1]
	\arrow[curve={height=12pt}, no head, from=5-3, to=3-5]
	\arrow[no head, from=5-3, to=4-4]
	\arrow[curve={height=-12pt}, no head, from=5-4, to=5-2]
	\arrow[no head, from=5-5, to=5-4]
\end{tikzcd}\]
\caption{$D=41$ (principal)}
\end{subfigure}
\hfill
\begin{subfigure}{.45\textwidth}
\[\begin{tikzcd}[cramped, sep=tiny]
	& {E_3} & {C_2} & {C_3} & {E_6} \\
	\bm{F_5^1} & {F_9^1} & {\boxed{C_1}} & {\boxed{C_4}} & {F_9^2} & \bm{F_5^2} \\
	& {E_5} & {C_6} & {C_5} & {E_4}
	\arrow[no head, from=1-2, to=2-2]
	\arrow[no head, from=1-3, to=1-4]
	\arrow[no head, from=1-4, to=2-4]
	\arrow[no head, from=1-4, to=2-6]
	\arrow[no head, from=1-5, to=2-6]
	\arrow[no head, from=2-1, to=1-2]
	\arrow[no head, from=2-1, to=1-3]
	\arrow[no head, from=2-1, to=3-2]
	\arrow[no head, from=2-1, to=3-3]
	\arrow[no head, from=2-2, to=3-2]
	\arrow[no head, from=2-3, to=1-3]
	\arrow[equals, from=2-3, to=2-4]
	\arrow[shift right, curve={height=6pt}, no head, from=2-4, to=2-2]
	\arrow[shift left, curve={height=-6pt}, no head, from=2-4, to=2-2]
	\arrow[no head, from=2-4, to=3-4]
	\arrow[no head, from=2-5, to=1-5]
	\arrow[curve={height=18pt}, no head, from=2-5, to=2-2]
	\arrow[curve={height=-18pt}, no head, from=2-5, to=2-2]
	\arrow[shift right, curve={height=6pt}, no head, from=2-5, to=2-3]
	\arrow[shift left, curve={height=-6pt}, no head, from=2-5, to=2-3]
	\arrow[no head, from=2-5, to=3-5]
	\arrow[no head, from=3-3, to=2-3]
	\arrow[no head, from=3-4, to=2-6]
	\arrow[no head, from=3-4, to=3-3]
	\arrow[no head, from=3-5, to=2-6]
\end{tikzcd}\]
\caption{$D=44$ (principal)}
\end{subfigure}
\end{figure}
\begin{figure}[H]
\begin{subfigure}{.45\textwidth}
\[\begin{tikzcd}[cramped,sep=tiny]
	{F_9^2} & \bm{E_5} & {\boxed{C_2}} \\
	& {C_1} & {F_8} & {C_3} \\
	{F_9^1} & \bm{E_6} & {\boxed{C_4}}
	\arrow[no head, from=1-1, to=1-2]
	\arrow[curve={height=-12pt}, no head, from=1-1, to=1-3]
	\arrow[curve={height=12pt}, no head, from=1-1, to=1-3]
	\arrow[no head, from=1-1, to=3-2]
	\arrow[no head, from=1-3, to=2-4]
	\arrow[curve={height=12pt}, no head, from=1-3, to=3-3]
	\arrow[curve={height=-12pt}, no head, from=1-3, to=3-3]
	\arrow[no head, from=2-2, to=1-3]
	\arrow[curve={height=12pt}, no head, from=2-2, to=2-4]
	\arrow[curve={height=-12pt}, no head, from=2-2, to=2-4]
	\arrow[no head, from=2-3, to=1-2]
	\arrow[no head, from=2-3, to=2-2]
	\arrow[no head, from=2-3, to=2-4]
	\arrow[no head, from=2-3, to=3-2]
	\arrow[no head, from=2-4, to=3-3]
	\arrow[no head, from=3-1, to=1-2]
	\arrow[no head, from=3-1, to=3-2]
	\arrow[curve={height=12pt}, no head, from=3-1, to=3-3]
	\arrow[curve={height=-12pt}, no head, from=3-1, to=3-3]
	\arrow[no head, from=3-3, to=2-2]
\end{tikzcd}\]
\caption{$D=56$ (principal)}
\end{subfigure}
\hfill
\begin{subfigure}{.45\textwidth}
\[\begin{tikzcd}[cramped,sep=tiny]
	{C_3} & {C_4} & {\boxed{C_5}} & {C_6} & {C_7} \\
	{C_2} && {F_7^2} && {C_8} \\
	& {E_3^+} & {F_6} & {E_4^+} \\
	{C_1} && {F_7^1} && {C_9} \\
	{C_{14}} & {C_{13}} & {\boxed{C_{12}}} & {C_{11}} & {C_{10}}
	\arrow[no head, from=1-1, to=1-2]
	\arrow[no head, from=1-2, to=1-3]
	\arrow[no head, from=1-3, to=1-4]
	\arrow[curve={height=-12pt}, no head, from=1-3, to=5-3]
	\arrow[curve={height=12pt}, no head, from=1-3, to=5-3]
	\arrow[no head, from=1-4, to=1-5]
	\arrow[no head, from=1-5, to=2-5]
	\arrow[no head, from=2-1, to=1-1]
	\arrow[curve={height=-12pt}, no head, from=2-1, to=2-5]
	\arrow[no head, from=2-1, to=5-3]
	\arrow[no head, from=2-3, to=1-1]
	\arrow[no head, from=2-3, to=1-5]
	\arrow[no head, from=2-5, to=4-5]
	\arrow[no head, from=2-5, to=5-3]
	\arrow[no head, from=3-2, to=2-3]
	\arrow[no head, from=3-2, to=4-3]
	\arrow[no head, from=3-3, to=1-2]
	\arrow[no head, from=3-3, to=1-4]
	\arrow[no head, from=3-3, to=5-2]
	\arrow[no head, from=3-3, to=5-4]
	\arrow[no head, from=3-4, to=2-3]
	\arrow[no head, from=4-1, to=1-3]
	\arrow[no head, from=4-1, to=2-1]
	\arrow[curve={height=12pt}, no head, from=4-1, to=4-5]
	\arrow[no head, from=4-3, to=3-4]
	\arrow[no head, from=4-3, to=5-1]
	\arrow[no head, from=4-3, to=5-5]
	\arrow[no head, from=4-5, to=1-3]
	\arrow[no head, from=4-5, to=5-5]
	\arrow[no head, from=5-1, to=4-1]
	\arrow[no head, from=5-2, to=5-1]
	\arrow[no head, from=5-3, to=5-2]
	\arrow[no head, from=5-4, to=5-3]
	\arrow[no head, from=5-5, to=5-4]
\end{tikzcd}\]
\caption{$D=57$ (principal)}
\end{subfigure}
\end{figure}
\begin{figure}[H]
\begin{subfigure}{.45\textwidth}
\[\begin{tikzcd}[cramped,sep=tiny]
	{\boxed{C_2}} & {E_4^+} && {E_6^+} & {\boxed{C_4}} \\
	& {E_3^+} && {E_5^+} \\
	& {E_8^+} && {E_9^+} \\
	& {E_5} & {F_6} & {E_6} \\
	{C_1} && \bm{E_7^+} && {C_3}
	\arrow[no head, from=1-1, to=1-2]
	\arrow[curve={height=-12pt}, equals, from=1-1, to=1-5]
	\arrow[no head, from=1-1, to=2-4]
	\arrow[no head, from=1-1, to=3-2]
	\arrow[no head, from=1-1, to=5-1]
	\arrow[no head, from=1-4, to=1-5]
	\arrow[no head, from=1-4, to=2-2]
	\arrow[no head, from=1-5, to=5-5]
	\arrow[no head, from=2-2, to=1-5]
	\arrow[no head, from=2-4, to=1-2]
	\arrow[no head, from=3-2, to=1-5]
	\arrow[no head, from=3-4, to=1-1]
	\arrow[no head, from=3-4, to=1-5]
	\arrow[no head, from=3-4, to=4-3]
	\arrow[no head, from=4-3, to=3-2]
	\arrow[no head, from=4-3, to=4-2]
	\arrow[no head, from=4-3, to=4-4]
	\arrow[curve={height=12pt}, no head, from=5-1, to=1-5]
	\arrow[curve={height=12pt}, no head, from=5-1, to=5-5]
	\arrow[no head, from=5-3, to=5-1]
	\arrow[no head, from=5-3, to=5-5]
	\arrow[curve={height=-12pt}, no head, from=5-5, to=1-1]
\end{tikzcd}\]
\caption{$D=69$ (principal)}
\end{subfigure}
\hfill
\begin{subfigure}{.45\textwidth}
\[\begin{tikzcd}[cramped,sep=tiny]
	{C_3'} & {C_5} & {\boxed{C_6}} & {\boxed{C_3}} & {C_4} & {C_4'} \\
	{C_1'} & {C_1} & {\boxed{C_2'}} & {\boxed{C_5'}} & {C_2} & {C_6'}
	\arrow[curve={height=-12pt}, no head, from=1-1, to=1-3]
	\arrow[curve={height=-24pt}, no head, from=1-1, to=1-6]
	\arrow[no head, from=1-1, to=2-3]
	\arrow[no head, from=1-2, to=1-3]
	\arrow[no head, from=1-2, to=2-2]
	\arrow[no head, from=1-2, to=2-3]
	\arrow[equals, from=1-3, to=1-4]
	\arrow[equals, from=1-4, to=2-4]
	\arrow[curve={height=12pt}, no head, from=1-5, to=1-2]
	\arrow[no head, from=1-5, to=1-4]
	\arrow[no head, from=1-5, to=2-4]
	\arrow[curve={height=12pt}, no head, from=1-6, to=1-4]
	\arrow[no head, from=1-6, to=2-6]
	\arrow[no head, from=2-1, to=1-1]
	\arrow[no head, from=2-1, to=1-3]
	\arrow[curve={height=12pt}, no head, from=2-1, to=2-3]
	\arrow[curve={height=24pt}, no head, from=2-1, to=2-6]
	\arrow[no head, from=2-2, to=1-3]
	\arrow[no head, from=2-2, to=2-3]
	\arrow[curve={height=12pt}, no head, from=2-2, to=2-5]
	\arrow[equals, from=2-3, to=1-3]
	\arrow[no head, from=2-4, to=1-6]
	\arrow[equals, from=2-4, to=2-3]
	\arrow[no head, from=2-5, to=1-4]
	\arrow[no head, from=2-5, to=1-5]
	\arrow[no head, from=2-5, to=2-4]
	\arrow[no head, from=2-6, to=1-4]
	\arrow[curve={height=-12pt}, no head, from=2-6, to=2-4]
\end{tikzcd}\]
\caption{$D=105$ (principal)}
\end{subfigure}
\end{figure}

\pagebreak
\section{Divisors on $\tilde X_g$}\label{Sec:computations}
 The following tables were computed with the aid of Magma \cite{Magma}. Code to this effect is available at \href{https://github.com/dgvirtz/hilbmodcycles}{github.com/dgvirtz/hilbmodcycles}.

\paragraph{Cusp divisors.\ }
We list the ideals defining the cusps, the negative of the self-intersections of each cycle of cusp divisors and the quadratic forms $Q_k(p,q)=\Nm(\lambda_{p,q})=ap^2+bpq+cq^2=:[a,b,c]$.
The raw count of cusp divisors may be compared with \cite[Table 2, row $l$]{VDG}.

\small
\begin{longtable}{ |Sr|Sl|Sc|}
\caption{Cusp divisors. $^*$ = The resolution cycle of the cusp is double the displayed cycle. $^\dagger$ = There are two cusps with the same resolution cycle.}\\
\hline 
\multicolumn{1}{|c|}{$D$} & \multicolumn{1}{c|}{Cycle} & \multicolumn{1}{c|}{$Q_k$'s} \\
\hline\hline
\multicolumn{3}{|c|}{non-principal $g$}\\
\hline
$21$ & $[ -3, -2, -2]^*$& \makecell{$[3,9,5]$, $[5,11,5]$, $[5,9,3]$}\\ 
 \hline
$24$ & $[ -4, -2, -2, -2]^*$ & \makecell{$[2,8,5]$, $[5,12,6]$, $[6,12,5]$, $[5,8,2]$}\\ 
 \hline
$28$ & $[ -3, -3, -2, -2, -2]^*$ & \makecell{$[3,8,3]$, $[3,10,6]$, $[6,14,7]$, $[7,14,6]$, $[6,10,3]$}\\ 
 \hline
$33$ & $[ -4, -4, -2, -2, -2, -2]^*$ & \makecell{$[2,7,2]$, $[2,9,6]$, $[6,15,8]$, $[8,17,8]$, $[8,15,6]$, $[6,9,2]$}\\ 
 \hline
$40$ & $[ -3, -4, -3, -2 ]^\dagger$ & \makecell{$[3,8,2]$, $[2,8,3]$, $[3,10,5]$, $[5,10,3]$}\\ 
 \hline
\multicolumn{3}{|c|}{principal $g$}\\
\hline
$29$ & $[ -7, -2, -2, -2, -2 ]$ & \makecell{$[1,7,5]$, $[5,13,7]$, $[7,15,7]$, $[7,13,5]$, $[5,7,1]$}\\ 
 \hline
$37$ & $[ -3, -2, -2, -2, -2, -3, -7 ]$ & \makecell{$[3,11,7]$, $[7,17,9]$, $[9,19,9]$, $[9,17,7]$,\\ $[7,11,3]$, $[3,7,1]$, $[1,7,3]$}\\ 
 \hline
$40$ & $[ -8, -2, -2, -2, -2, -2 ]^\dagger$ & \makecell{$[1,8,6]$, $[6,16,9]$, $[9,20,10]$, $[10,20,9]$, $[9,16,6]$, $[6,8,1]$}\\ 
 \hline
$41$ & \makecell{$[ -4, -2, -3, -2, -2, -2$,\\ $-2, -3, -2, -4, -7 ]$} & \makecell{$[2,9,5]$, $[5,11,4]$, $[4,13,8]$, $[8,19,10]$, $[10,21,10]$,\\ $[10,19,8]$, $[8,13,4]$, $[4,11,5]$, $[5,9,2]$, $[2,7,1]$, $[1,7,2]$}\\ 
 \hline
$44$ & $[ -8, -2, -2]^*$ & \makecell{$[1,8,5]$, $[5,12,5]$, $[5,8,1]$}\\ 
 \hline
$56$ & $[ -4, -8]^*$ & \makecell{$[2,8,1]$, $[1,8,2]$}\\ 
 \hline
$57$ & $[ -3, -3, -2, -2, -9, -2, -2]^*$ & \makecell{$[4,11,4]$, $[4,13,7]$, $[7,15,6]$, $[6,9,1]$,\\ $[1,9,6]$, $[6,15,7]$, $[7,13,4]$}\\ 
 \hline
$69$ & $[ -3, -9]^*$ & \makecell{$[3,9,1]$, $[1,9,3]$}\\ 
 \hline
$105$ & $[ -3, -3, -11]^{*\dagger}$ & \makecell{$[4,13,4]$, $[4,11,1]$, $[1,11,4]$}\\ 
\hline
 \endlastfoot
\end{longtable}
\normalsize

\paragraph{Elliptic divisors.~}
We list the stabiliser matrices of elliptic points. Below, the norm $A$ of the genus ideal is chosen as small as possible (i.e.\ $A=5,5,3,2,3$ for $D=21,24,28,33,40$ in the non-principal cases and $A=1$ in the principal cases).
The raw count of elliptic points may be be compared with the class number-theoretic formulae in \cite[Table 1.I]{VDG} and \cite[Table 2]{VDG}.

\pagebreak

\small
\begin{longtable}{ |Sr|p{0.9\textwidth}|}
\caption{Elliptic divisors of order $2$. $\beta^2=$ square-free part of $D$.}\\
\hline
\multicolumn{1}{|c|}{$D$} & \multicolumn{1}{c|}{Stabiliser matrices} \\
\hline\hline
\multicolumn{2}{|c|}{non-principal $g$}\\
\hline \rule{0pt}{4mm}
$21$ & { \scalebox{0.8}{$ {\begin{pmatrix} 0 & \frac{-\beta + 1}{10} \\ \frac{\beta + 1}{2} & 0 \end{pmatrix}}$},
\scalebox{0.8}{ $ \begin{pmatrix} 0 & \frac{-\beta - 4}{5} \\ \beta - 4 & 0 \end{pmatrix}$},
\scalebox{0.8}{ $ \begin{pmatrix} 2 & -1 \\ 5 & -2 \end{pmatrix}$},
\scalebox{0.8}{ $ \begin{pmatrix} -2 & -1 \\ 5 & 2 \end{pmatrix}$}
} \vspace{1mm}\\ 
\hline \rule{0pt}{4mm}
$24$ & {\scalebox{0.8}{ $ \begin{pmatrix} 0 & \frac{-\beta + 1}{5} \\ \beta + 1 & 0 \end{pmatrix}$},\nobreak
\scalebox{0.8}{ $ \begin{pmatrix} 0 & \frac{-7\beta + 17}{5} \\ 7\beta + 17 & 0 \end{pmatrix}$},\nobreak
\scalebox{0.8}{ $ \begin{pmatrix} 2 & -1 \\ 5 & -2 \end{pmatrix}$},\nobreak
\scalebox{0.8}{ $ \begin{pmatrix} -2 & -1 \\ 5 & 2 \end{pmatrix}$},\nobreak
\scalebox{0.8}{ $ \begin{pmatrix} \beta - 3 & \frac{4\beta - 14}{5} \\ -\beta + 4 & -\beta + 3 \end{pmatrix}$},\nobreak
\scalebox{0.8}{ $ \begin{pmatrix} \beta - 1 & -\beta + 2 \\ 2\beta + 2 & -\beta + 1 \end{pmatrix}$}
}\vspace{1mm}\\ 
 \hline \rule{0pt}{4mm} 
$28$ & {\scalebox{0.8}{ $ \begin{pmatrix} 0 & \frac{-\beta - 2}{3} \\ \beta - 2 & 0 \end{pmatrix}$},
\scalebox{0.8}{ $ \begin{pmatrix} 0 & \frac{-14\beta - 37}{3} \\ 14\beta - 37 & 0 \end{pmatrix}$},
\scalebox{0.8}{ $ \begin{pmatrix} -\beta & \frac{-4\beta - 8}{3} \\ 2\beta - 4 & \beta \end{pmatrix}$},
\scalebox{0.8}{ $ \begin{pmatrix} -\beta - 2 & \frac{-4\beta - 8}{3} \\ \beta + 1 & \beta + 2 \end{pmatrix}$}
}\vspace{1mm}\\  
 \hline \rule{0pt}{4mm} 
$33$ & {\scalebox{0.8}{ $ \begin{pmatrix} 0 & \frac{-\beta + 5}{4} \\ \frac{\beta + 5}{2} & 0 \end{pmatrix}$},
\scalebox{0.8}{ $ \begin{pmatrix} 0 & \frac{-3\beta - 17}{4} \\ \frac{3\beta - 17}{2} & 0 \end{pmatrix}$},
\scalebox{0.8}{ $ \begin{pmatrix} -\beta + 6 & \frac{7\beta - 47}{4} \\ \frac{-\beta + 7}{2} & \beta - 6 \end{pmatrix}$},
\scalebox{0.8}{ $ \begin{pmatrix} \frac{-231\beta - 1327}{2} & \frac{223\beta - 2947}{4} \\ 368\beta + 2114 & \frac{231\beta + 
1327}{2} \end{pmatrix}$}
}\vspace{1mm}\\  
 \hline \rule{0pt}{4mm} 
$40$ & {\scalebox{0.8}{ $ \begin{pmatrix} -2 & \frac{-\beta - 5}{3} \\ -\beta + 5 & 2 \end{pmatrix}$},
\scalebox{0.8}{ $ \begin{pmatrix} -\beta - 1 & \frac{-2\beta - 4}{3} \\ 2\beta - 1 & \beta + 1 \end{pmatrix}$},
\scalebox{0.8}{ $ \begin{pmatrix} 2 & \frac{-\beta - 5}{3} \\ -\beta + 5 & -2 \end{pmatrix}$},
\scalebox{0.8}{ $ \begin{pmatrix} \beta - 3 & \frac{-2\beta + 5}{3} \\ 2\beta - 4 & -\beta + 3 \end{pmatrix}$},
\scalebox{0.8}{ $ \begin{pmatrix} \beta + 3 & \frac{-2\beta - 10}{3} \\ \beta + 4 & -\beta - 3 \end{pmatrix}$},
\scalebox{0.8}{ $ \begin{pmatrix} -1 & \frac{-\beta - 2}{3} \\ \beta - 2 & 1 \end{pmatrix}$}
}\vspace{1mm}\\  
\hline
\multicolumn{2}{|c|}{principal $g$}\\
\hline \rule{0pt}{4mm}
$29$ & {\scalebox{0.8}{ $ \begin{pmatrix} 0 & -1 \\ 1 & 0 \end{pmatrix}$},
\scalebox{0.8}{ $ \begin{pmatrix} 0 & \frac{-\beta - 5}{2} \\ \frac{\beta - 5}{2} & 0 \end{pmatrix}$},
\scalebox{0.8}{ $ \begin{pmatrix} -\beta - 5 & \frac{-3\beta - 19}{2} \\ \frac{\beta + 7}{2} & \beta + 5 \end{pmatrix}$},
\scalebox{0.8}{ $ \begin{pmatrix} 2 & \frac{-\beta - 3}{2} \\ \frac{\beta - 3}{2} & -2 \end{pmatrix}$},
\scalebox{0.8}{ $ \begin{pmatrix} \frac{-\beta - 3}{2} & \frac{-\beta - 7}{2} \\ 3 & \frac{\beta + 3}{2} \end{pmatrix}$},
\scalebox{0.8}{ $ \begin{pmatrix} \frac{\beta + 7}{2} & -\beta - 4 \\ \frac{\beta + 3}{2} & \frac{-\beta - 7}{2} \end{pmatrix}$}
}\vspace{1mm}\\  
 \hline \rule{0pt}{4mm}
$37$ & {\scalebox{0.8}{ $ \begin{pmatrix} 0 & -1 \\ 1 & 0 \end{pmatrix}$},
\scalebox{0.8}{ $ \begin{pmatrix} 0 & -\beta - 6 \\ \beta - 6 & 0 \end{pmatrix}$}
}\vspace{1mm}\\  
 \hline \rule{0pt}{4mm}
$40$ & {\scalebox{0.8}{ $ \begin{pmatrix} 0 & -1 \\ 1 & 0 \end{pmatrix}$},
\scalebox{0.8}{ $ \begin{pmatrix} 0 & -\beta + 3 \\ \beta + 3 & 0 \end{pmatrix}$},
\scalebox{0.8}{ $ \begin{pmatrix} 3\beta & -13 \\ 7 & -3\beta \end{pmatrix}$},
\scalebox{0.8}{ $ \begin{pmatrix} 3\beta + 30 & -13\beta - 9 \\ 7\beta + 9 & -3\beta - 30 \end{pmatrix}$},
\scalebox{0.8}{ $ \begin{pmatrix} \beta + 1 & -\beta - 6 \\ 2 & -\beta - 1 \end{pmatrix}$},
\scalebox{0.8}{ $ \begin{pmatrix} 3 & -\beta \\ \beta & -3 \end{pmatrix}$}
}\vspace{1mm}\\  
 \hline \rule{0pt}{4mm}
$41$ & {\scalebox{0.8}{ $ \begin{pmatrix} 0 & -1 \\ 1 & 0 \end{pmatrix}$},
\scalebox{0.8}{ $ \begin{pmatrix} 0 & -5\beta - 32 \\ 5\beta - 32 & 0 \end{pmatrix}$},
\scalebox{0.8}{ $ \begin{pmatrix} \frac{-\beta - 3}{2} & \frac{-\beta - 9}{2} \\ 3 & \frac{\beta + 3}{2} \end{pmatrix}$},
\scalebox{0.8}{ $ \begin{pmatrix} -3 & \frac{-\beta - 1}{2} \\ \frac{\beta - 1}{2} & 3 \end{pmatrix}$},
\scalebox{0.8}{ $ \begin{pmatrix} -\beta - 6 & \frac{-3\beta - 27}{2} \\ \frac{\beta + 7}{2} & \beta + 6 \end{pmatrix}$},
\scalebox{0.8}{ $ \begin{pmatrix} 2\beta - 13 & \frac{-7\beta + 25}{2} \\ \frac{3\beta - 19}{2} & -2\beta + 13 \end{pmatrix}$},
\scalebox{0.8}{ $ \begin{pmatrix} \frac{\beta + 3}{2} & \frac{-\beta - 9}{2} \\ 3 & \frac{-\beta - 3}{2} \end{pmatrix}$},
\scalebox{0.8}{ $ \begin{pmatrix} 3 & \frac{-\beta - 1}{2} \\ \frac{\beta - 1}{2} & -3 \end{pmatrix}$}
}\vspace{1mm}\\  
 \hline \rule{0pt}{4mm}
$44$ & {\scalebox{0.8}{ $ \begin{pmatrix} 0 & -1 \\ 1 & 0 \end{pmatrix}$},
\scalebox{0.8}{ $ \begin{pmatrix} 0 & -3\beta - 10 \\ -3\beta + 10 & 0 \end{pmatrix}$},
\scalebox{0.8}{ $ \begin{pmatrix} -\beta & -4 \\ 3 & \beta \end{pmatrix}$},
\scalebox{0.8}{ $ \begin{pmatrix} 2\beta - 6 & 3\beta - 12 \\ -\beta + 4 & -2\beta + 6 \end{pmatrix}$},
\scalebox{0.8}{ $ \begin{pmatrix} \beta & -4 \\ 3 & -\beta \end{pmatrix}$},
\scalebox{0.8}{ $ \begin{pmatrix} -2\beta + 6 & 3\beta - 12 \\ -\beta + 4 & 2\beta - 6 \end{pmatrix}$},
\scalebox{0.8}{ $ \begin{pmatrix} -\beta & -6 \\ 2 & \beta \end{pmatrix}$},
\scalebox{0.8}{ $ \begin{pmatrix} -\beta - 4 & -2\beta - 2 \\ \beta + 3 & \beta + 4 \end{pmatrix}$},
\scalebox{0.8}{ $ \begin{pmatrix} -3 & -\beta + 1 \\ \beta + 1 & 3 \end{pmatrix}$},
\scalebox{0.8}{ $ \begin{pmatrix} 3 & -\beta + 1 \\ \beta + 1 & -3 \end{pmatrix}$}
}\vspace{0.1mm}\\  
 \hline \rule{0pt}{4mm}
$56$ & {\scalebox{0.8}{ $ \begin{pmatrix} 0 & -1 \\ 1 & 0 \end{pmatrix}$},
\scalebox{0.8}{ $ \begin{pmatrix} 0 & 4\beta - 15 \\ 4\beta + 15 & 0 \end{pmatrix}$},
\scalebox{0.8}{ $ \begin{pmatrix} -\beta + 5 & -2\beta + 4 \\ \beta - 3 & \beta - 5 \end{pmatrix}$},
\scalebox{0.8}{ $ \begin{pmatrix} 5 & -2\beta + 2 \\ \beta + 1 & -5 \end{pmatrix}$},
\scalebox{0.8}{ $ \begin{pmatrix} -\beta & -5 \\ 3 & \beta \end{pmatrix}$},
\scalebox{0.8}{ $ \begin{pmatrix} \beta & -5 \\ 3 & -\beta \end{pmatrix}$},
\scalebox{0.8}{ $ \begin{pmatrix} \beta - 5 & -2\beta + 4 \\ \beta - 3 & -\beta + 5 \end{pmatrix}$},
\scalebox{0.8}{ $ \begin{pmatrix} -5 & -2\beta + 2 \\ \beta + 1 & 5 \end{pmatrix}$},
\scalebox{0.8}{ $ \begin{pmatrix} -2\beta + 7 & 3\beta - 16 \\ -\beta + 4 & 2\beta - 7 \end{pmatrix}$},
\scalebox{0.8}{ $ \begin{pmatrix} -\beta + 5 & -2\beta + 6 \\ \beta - 2 & \beta - 5 \end{pmatrix}$},
\scalebox{0.8}{ $ \begin{pmatrix} \beta + 1 & -\beta - 8 \\ 2 & -\beta - 1 \end{pmatrix}$},
\scalebox{0.8}{ $ \begin{pmatrix} \beta - 5 & -2\beta + 6 \\ \beta - 2 & -\beta + 5 \end{pmatrix}$}
}\vspace{1mm}\\  
 \hline \rule{0pt}{4mm}
$57$ & {\scalebox{0.8}{ $ \begin{pmatrix} 0 & -1 \\ 1 & 0 \end{pmatrix}$},
\scalebox{0.8}{ $ \begin{pmatrix} 0 & 20\beta - 151 \\ 20\beta + 151 & 0 \end{pmatrix}$},
\scalebox{0.8}{ $ \begin{pmatrix} \frac{-\beta + 9}{2} & -\beta + 2 \\ \frac{\beta - 7}{2} & \frac{\beta - 9}{2} \end{pmatrix}$},
\scalebox{0.8}{ $ \begin{pmatrix} -9\beta - 68 & \frac{-35\beta - 181}{2} \\ \frac{11\beta + 83}{2} & 9\beta + 68 \end{pmatrix}$}
}\vspace{1mm}\\  
 \hline \rule{0pt}{4mm}
$69$ & {\scalebox{0.8}{ $ \begin{pmatrix} 0 & -1 \\ 1 & 0 \end{pmatrix}$},
\scalebox{0.8}{ $ \begin{pmatrix} 0 & \frac{-3\beta - 25}{2} \\ \frac{-3\beta + 25}{2} & 0 \end{pmatrix}$},
\scalebox{0.8}{ $ \begin{pmatrix} \frac{\beta - 11}{2} & -\beta + 4 \\ \frac{\beta - 7}{2} & \frac{-\beta + 11}{2} \end{pmatrix}$},
\scalebox{0.8}{ $ \begin{pmatrix} -4 & \frac{-\beta + 1}{2} \\ \frac{\beta + 1}{2} & 4 \end{pmatrix}$},
\scalebox{0.8}{ $ \begin{pmatrix} -\beta - 6 & -\beta - 13 \\ \frac{\beta + 11}{2} & \beta + 6 \end{pmatrix}$},
\scalebox{0.8}{ $ \begin{pmatrix} \beta - 6 & \beta - 13 \\ \frac{-\beta + 11}{2} & -\beta + 6 \end{pmatrix}$},
\scalebox{0.8}{ $ \begin{pmatrix} -\beta - 9 & \frac{-5\beta - 37}{2} \\ \frac{\beta + 7}{2} & \beta + 9 \end{pmatrix}$},
\scalebox{0.8}{ $ \begin{pmatrix} 4 & \frac{-\beta + 1}{2} \\ \frac{\beta + 1}{2} & -4 \end{pmatrix}$}
}\vspace{1mm}\\  
 \hline \rule{0pt}{4mm}
$105$ & {\scalebox{0.8}{ $ \begin{pmatrix} 0 & -1 \\ 1 & 0 \end{pmatrix}$},
\scalebox{0.8}{ $ \begin{pmatrix} 0 & -4\beta - 41 \\ -4\beta + 41 & 0 \end{pmatrix}$},
\scalebox{0.8}{ $ \begin{pmatrix} -5 & \frac{-\beta - 1}{2} \\ \frac{\beta - 1}{2} & 5 \end{pmatrix}$},
\scalebox{0.8}{ $ \begin{pmatrix} -5 & \frac{-\beta + 1}{2} \\ \frac{\beta + 1}{2} & 5 \end{pmatrix}$},
\scalebox{0.8}{ $ \begin{pmatrix} \frac{21\beta - 145}{2} & 14\beta - 241 \\ \frac{-7\beta + 97}{2} & \frac{-21\beta + 145}{2} \end{pmatrix}$},
\scalebox{0.8}{ $ \begin{pmatrix} \frac{49\beta + 355}{2} & -46\beta - 641 \\ \frac{13\beta + 197}{2} & \frac{-49\beta - 355}{2} \end{pmatrix}$},
\scalebox{0.8}{ $ \begin{pmatrix} \frac{-855\beta + 9007}{2} & -969\beta + 9584 \\ \frac{399\beta - 4001}{2} & \frac{855\beta - 
9007}{2} \end{pmatrix}$},
\scalebox{0.8}{ $ \begin{pmatrix} \frac{5\beta + 275}{2} & -33\beta - 92 \\ \frac{11\beta + 11}{2} & \frac{-5\beta - 275}{2} \end{pmatrix}$}
}\vspace{1mm}\\  
 \hline
 \end{longtable}
\normalsize

\pagebreak

\small
\begin{longtable}{ |Sr|p{0.9\textwidth}|}
\caption{Elliptic divisors of type $3^+$.}\\
\hline 
\multicolumn{1}{|c|}{$D$} & \multicolumn{1}{c|}{Stabiliser matrices} \\
\hline\hline
\multicolumn{2}{|c|}{non-principal $g$}\\
\hline \rule{0pt}{4mm}
$21$ & {\scalebox{0.8}{ $ \begin{pmatrix} 1 & \frac{-\beta - 9}{10} \\ \frac{-\beta + 9}{2} & -2 \end{pmatrix}$}
}\vspace{1mm}\\  
 \hline
$24$ & {--}\\ 
 \hline \rule{0pt}{4mm}
$28$ & {\scalebox{0.8}{ $ \begin{pmatrix} -\beta - 3 & \frac{-2\beta - 7}{3} \\ \beta + 4 & \beta + 2 \end{pmatrix}$},
\scalebox{0.8}{ $ \begin{pmatrix} \beta + 3 & \frac{-7\beta - 20}{3} \\ 3 & -\beta - 4 \end{pmatrix}$}
}\vspace{1mm}\\  
 \hline
$33$ & {--}\\ 
 \hline \rule{0pt}{4mm}
$40$ & {\scalebox{0.8}{ $ \begin{pmatrix} -\beta + 3 & \frac{\beta - 7}{3} \\ -2\beta + 7 & \beta - 4 \end{pmatrix}$},
\scalebox{0.8}{ $ \begin{pmatrix} -2 & -1 \\ 3 & 1 \end{pmatrix}$}
}\vspace{1mm}\\  
\hline
\multicolumn{2}{|c|}{principal $g$}\\ 
\hline \rule{0pt}{4mm}
$29$ & {\scalebox{0.8}{ $ \begin{pmatrix} 0 & -1 \\ 1 & -1 \end{pmatrix}$},
\scalebox{0.8}{ $ \begin{pmatrix} \frac{-\beta - 1}{2} & -4 \\ 2 & \frac{\beta - 1}{2} \end{pmatrix}$},
\scalebox{0.8}{ $ \begin{pmatrix} \frac{-\beta - 3}{2} & \frac{-\beta - 9}{2} \\ 2 & \frac{\beta + 1}{2} \end{pmatrix}$}
}\vspace{1mm}\\  
 \hline \rule{0pt}{4mm}
$37$ & {\scalebox{0.8}{ $ \begin{pmatrix} 0 & -1 \\ 1 & -1 \end{pmatrix}$},
\scalebox{0.8}{ $ \begin{pmatrix} \frac{\beta + 1}{2} & \frac{-\beta - 11}{2} \\ 2 & \frac{-\beta - 3}{2} \end{pmatrix}$},
\scalebox{0.8}{ $ \begin{pmatrix} -\beta - 6 & \frac{-3\beta - 23}{2} \\ \frac{\beta + 7}{2} & \beta + 5 \end{pmatrix}$},
\scalebox{0.8}{ $ \begin{pmatrix} \frac{\beta - 1}{2} & -5 \\ 2 & \frac{-\beta - 1}{2} \end{pmatrix}$}
}\vspace{1mm}\\  
 \hline \rule{0pt}{4mm}
$40$ & {\scalebox{0.8}{ $ \begin{pmatrix} 0 & -1 \\ 1 & -1 \end{pmatrix}$},
\scalebox{0.8}{ $ \begin{pmatrix} 3\beta - 5 & 2\beta - 13 \\ -\beta + 7 & -3\beta + 4 \end{pmatrix}$}
}\vspace{1mm}\\  
 \hline \rule{0pt}{4mm}
$41$ & {\scalebox{0.8}{ $ \begin{pmatrix} -1 & -1 \\ 1 & 0 \end{pmatrix}$}
}\vspace{1mm}\\  
 \hline \rule{0pt}{4mm}
$44$ & {\scalebox{0.8}{ $ \begin{pmatrix} 0 & -1 \\ 1 & -1 \end{pmatrix}$},
\scalebox{0.8}{ $ \begin{pmatrix} -5\beta - 17 & -6\beta - 35 \\ 3\beta + 10 & 5\beta + 16 \end{pmatrix}$}
}\vspace{1mm}\\  
 \hline \rule{0pt}{4mm}
$56$ & {\scalebox{0.8}{ $ \begin{pmatrix} 0 & -1 \\ 1 & -1 \end{pmatrix}$},
\scalebox{0.8}{ $ \begin{pmatrix} -2\beta + 7 & -2\beta - 15 \\ -4\beta + 15 & 2\beta - 8 \end{pmatrix}$}
}\vspace{1mm}\\  
 \hline \rule{0pt}{4mm}
$57$ & {\scalebox{0.8}{ $ \begin{pmatrix} 0 & -1 \\ 1 & -1 \end{pmatrix}$},
\scalebox{0.8}{ $ \begin{pmatrix} 10\beta + 75 & 10\beta - 151 \\ 20\beta + 151 & -10\beta - 76 \end{pmatrix}$},
\scalebox{0.8}{ $ \begin{pmatrix} \frac{-\beta - 1}{2} & -5 \\ 3 & \frac{\beta - 1}{2} \end{pmatrix}$},
\scalebox{0.8}{ $ \begin{pmatrix} \frac{\beta - 1}{2} & -5 \\ 3 & \frac{-\beta - 1}{2} \end{pmatrix}$}
}\vspace{1mm}\\  
 \hline \rule{0pt}{4mm}
$69$ & {\scalebox{0.8}{ $ \begin{pmatrix} 0 & -1 \\ 1 & -1 \end{pmatrix}$},
\scalebox{0.8}{ $ \begin{pmatrix} \frac{-7\beta - 59}{2} & -7\beta - 77 \\ \frac{3\beta + 25}{2} & \frac{7\beta + 57}{2} \end{pmatrix}$},
\scalebox{0.8}{ $ \begin{pmatrix} \frac{-\beta - 3}{2} & \frac{-\beta - 19}{2} \\ 2 & \frac{\beta + 1}{2} \end{pmatrix}$},
\scalebox{0.8}{ $ \begin{pmatrix} \frac{-\beta - 5}{2} & \frac{-\beta - 11}{2} \\ 4 & \frac{\beta + 3}{2} \end{pmatrix}$},
\scalebox{0.8}{ $ \begin{pmatrix} \frac{\beta + 3}{2} & \frac{-\beta - 11}{2} \\ 4 & \frac{-\beta - 5}{2} \end{pmatrix}$},
\scalebox{0.8}{ $ \begin{pmatrix} \frac{-\beta - 1}{2} & -9 \\ 2 & \frac{\beta - 1}{2} \end{pmatrix}$},
\scalebox{0.8}{ $ \begin{pmatrix} -\beta + 7 & \frac{3\beta - 33}{2} \\ \frac{-\beta + 9}{2} & \beta - 8 \end{pmatrix}$},
\scalebox{0.8}{ $ \begin{pmatrix} \frac{\beta - 1}{2} & -6 \\ 3 & \frac{-\beta - 1}{2} \end{pmatrix}$},
\scalebox{0.8}{ $ \begin{pmatrix} \frac{-\beta - 1}{2} & -6 \\ 3 & \frac{\beta - 1}{2} \end{pmatrix}$}
}\vspace{1mm}\\  
 \hline \rule{0pt}{4mm}
$105$ & {\scalebox{0.8}{ $ \begin{pmatrix} 0 & -1 \\ 1 & -1 \end{pmatrix}$},
\scalebox{0.8}{ $ \begin{pmatrix} -2\beta + 20 & -2\beta - 41 \\ -4\beta + 41 & 2\beta - 21 \end{pmatrix}$},
\scalebox{0.8}{ $ \begin{pmatrix} 13\beta + 79 & -20\beta - 329 \\ 3\beta + 54 & -13\beta - 80 \end{pmatrix}$},
\scalebox{0.8}{ $ \begin{pmatrix} \frac{21\beta - 121}{2} & 11\beta - 244 \\ -3\beta + 48 & \frac{-21\beta + 119}{2} \end{pmatrix}$},
\scalebox{0.8}{ $ \begin{pmatrix} \frac{\beta - 1}{2} & -9 \\ 3 & \frac{-\beta - 1}{2} \end{pmatrix}$},
\scalebox{0.8}{ $ \begin{pmatrix} \frac{-\beta - 1}{2} & -9 \\ 3 & \frac{\beta - 1}{2} \end{pmatrix}$}
}\vspace{1mm}\\  
 \hline
\end{longtable}
\normalsize

\pagebreak

\small
\begin{longtable}{ |Sr|p{0.9\textwidth}|}
\caption{Elliptic divisors of type $3^-$.}\\
\hline 
\multicolumn{1}{|c|}{$D$} & \multicolumn{1}{c|}{Stabiliser matrices} \\
\endfirsthead
\hline\hline
\multicolumn{2}{|c|}{non-principal $g$}\\
\hline \rule{0pt}{4mm}
$21$ & {\scalebox{0.8}{ $ \begin{pmatrix} 0 & \frac{-\beta + 1}{10} \\ \frac{\beta + 1}{2} & -1 \end{pmatrix}$},
\scalebox{0.8}{ $ \begin{pmatrix} \frac{\beta - 5}{2} & \frac{-2\beta + 2}{5} \\ \beta - 4 & \frac{-\beta + 3}{2} \end{pmatrix}$},
\scalebox{0.8}{ $ \begin{pmatrix} \frac{\beta + 1}{2} & \frac{-3\beta - 7}{10} \\ \beta + 1 & \frac{-\beta - 3}{2} \end{pmatrix}$},
\scalebox{0.8}{ $ \begin{pmatrix} \frac{\beta - 1}{2} & \frac{-3\beta + 3}{10} \\ \beta + 1 & \frac{-\beta - 1}{2} \end{pmatrix}$}
}\vspace{1mm}\\  
 \hline \rule{0pt}{4mm}
$24$ & {\scalebox{0.8}{ $ \begin{pmatrix} 0 & \frac{-\beta + 1}{5} \\ \beta + 1 & -1 \end{pmatrix}$},
\scalebox{0.8}{ $ \begin{pmatrix} -\beta + 2 & \frac{-4\beta - 1}{5} \\ 3\beta - 7 & \beta - 3 \end{pmatrix}$},
\scalebox{0.8}{ $ \begin{pmatrix} -2\beta + 4 & \frac{-12\beta + 27}{5} \\ 2\beta - 3 & 2\beta - 5 \end{pmatrix}$}
}\vspace{1mm}\\  
 \hline \rule{0pt}{4mm}
$28$ & {\scalebox{0.8}{ $ \begin{pmatrix} 0 & \frac{-\beta - 2}{3} \\ \beta - 2 & -1 \end{pmatrix}$},
\scalebox{0.8}{ $ \begin{pmatrix} -4\beta - 11 & \frac{-26\beta - 61}{3} \\ 2\beta + 5 & 4\beta + 10 \end{pmatrix}$}
}\vspace{1mm}\\  
 \hline \rule{0pt}{4mm}
$33$ & {\scalebox{0.8}{ $ \begin{pmatrix} 0 & \frac{-\beta + 5}{4} \\ \frac{\beta + 5}{2} & -1 \end{pmatrix}$},
\scalebox{0.8}{ $ \begin{pmatrix} -2\beta + 11 & \frac{-13\beta + 49}{4} \\ \frac{3\beta - 17}{2} & 2\beta - 12 \end{pmatrix}$},
\scalebox{0.8}{ $ \begin{pmatrix} \frac{-\beta + 5}{2} & \frac{-3\beta + 15}{4} \\ \frac{\beta - 3}{2} & \frac{\beta - 7}{2} \end{pmatrix}$}
}\vspace{1mm}\\  
 \hline \rule{0pt}{4mm}
$40$ & {\scalebox{0.8}{ $ \begin{pmatrix} -2 & \frac{-\beta + 1}{3} \\ \beta + 1 & 1 \end{pmatrix}$},
\scalebox{0.8}{ $ \begin{pmatrix} -25\beta + 109 & \frac{-185\beta + 479}{3} \\ 20\beta - 37 & 25\beta - 110 \end{pmatrix}$}
}\vspace{1mm}\\  
 \hline
 \multicolumn{2}{|c|}{principal $g$}\\
\hline \rule{0pt}{4mm}
$29$ & {\scalebox{0.8}{ $ \begin{pmatrix} \frac{\beta - 7}{2} & -\beta + 1 \\ \frac{\beta - 5}{2} & \frac{-\beta + 5}{2} \end{pmatrix}$},
\scalebox{0.8}{ $ \begin{pmatrix} 2 & \frac{-\beta - 1}{2} \\ \frac{\beta - 1}{2} & -3 \end{pmatrix}$},
\scalebox{0.8}{ $ \begin{pmatrix} 2 & \frac{-\beta + 1}{2} \\ \frac{\beta + 1}{2} & -3 \end{pmatrix}$}
}\vspace{1mm}\\  
 \hline \rule{0pt}{4mm}
$37$ & {\scalebox{0.8}{ $ \begin{pmatrix} \frac{-\beta + 5}{2} & -\beta - 3 \\ \beta - 6 & \frac{\beta - 7}{2} \end{pmatrix}$},
\scalebox{0.8}{ $ \begin{pmatrix} 2 & \frac{-\beta - 3}{2} \\ \frac{\beta - 3}{2} & -3 \end{pmatrix}$},
\scalebox{0.8}{ $ \begin{pmatrix} \frac{-\beta + 7}{2} & -\beta + 3 \\ \frac{\beta - 5}{2} & \frac{\beta - 9}{2} \end{pmatrix}$},
\scalebox{0.8}{ $ \begin{pmatrix} -3 & \frac{-\beta - 3}{2} \\ \frac{\beta - 3}{2} & 2 \end{pmatrix}$}
}\vspace{1mm}\\  
 \hline \rule{0pt}{4mm}
$40$ & {\scalebox{0.8}{ $ \begin{pmatrix} \beta + 3 & -2\beta - 1 \\ \beta + 3 & -\beta - 4 \end{pmatrix}$},
\scalebox{0.8}{ $ \begin{pmatrix} 2\beta + 8 & -4\beta - 11 \\ 2\beta + 3 & -2\beta - 9 \end{pmatrix}$}
}\vspace{1mm}\\  
 \hline \rule{0pt}{4mm}
$41$ & {\scalebox{0.8}{ $ \begin{pmatrix} \frac{-5\beta + 31}{2} & -5\beta - 16 \\ 5\beta - 32 & \frac{5\beta - 33}{2} \end{pmatrix}$}
}\vspace{1mm}\\  
 \hline \rule{0pt}{4mm}
$44$ & {\scalebox{0.8}{ $ \begin{pmatrix} \beta + 4 & -2\beta - 5 \\ \beta + 2 & -\beta - 5 \end{pmatrix}$},
\scalebox{0.8}{ $ \begin{pmatrix} -\beta - 5 & -2\beta - 5 \\ \beta + 2 & \beta + 4 \end{pmatrix}$}
}\vspace{1mm}\\  
 \hline \rule{0pt}{4mm}
$56$ & {\scalebox{0.8}{ $ \begin{pmatrix} 3 & -\beta - 1 \\ \beta - 1 & -4 \end{pmatrix}$},
\scalebox{0.8}{ $ \begin{pmatrix} -4 & -\beta - 1 \\ \beta - 1 & 3 \end{pmatrix}$}
}\vspace{1mm}\\  
 \hline \rule{0pt}{4mm}
$57$ & {\scalebox{0.8}{ $ \begin{pmatrix} \beta - 8 & -\beta \\ 2\beta - 15 & -\beta + 7 \end{pmatrix}$}
}\vspace{1mm}\\  
 \hline
$69$ & {--}\\ 
 \hline
$105$ & {--}\\ 
 \hline
\end{longtable}
\normalsize

\pagebreak

\paragraph{Hirzebruch--Zagier divisors. }
We list for each $1\leq N\leq 10$ the matrices $B$. The genus ideal is chosen identically to the preceding section.

The number of components may be compared with the formula in \cite[Theorem V.3.3]{VDG}.

\small
\begin{longtable}{ |Sr|Sc|}
\caption{Hirzebruch--Zagier divisors. $\beta^2=$ square-free part of $D$.}\\
\hline 
\multicolumn{1}{|c|}{$D$} & \multicolumn{1}{c|}{$[a,b,\lambda]$'s} \\
\hline\hline
\multicolumn{2}{|c|}{non-principal $g$}\\
\hline
$21$ & \makecell{$F_3$: $[1,-2,3]$, $F_5$: $[1,-15,-8]$, $[1,0,1]$, $F_6$: $[1,-19,9]$}\\ 
\hline
$24$ & \makecell{$F_2$: $[1,-2,-3\beta + 8]$, $F_5$: $[1,-5,5]$, $[1,0,1]$, $F_6$: $[1,24,5\beta + 6]$, $F_8$: $[1,-13,8]$}\\ 
\hline
$28$ & \makecell{$F_3$: $[1,-3,\beta + 6]$, $[1,0,1]$, $F_6$: $[1,36,7\beta + 3]$,\\ $F_7$: $[1,37,-7\beta]$, $[1,-5,-7]$, $F_{10}$: $[1,19,7\beta - 13]$}\\ 
\hline
$33$ & \makecell{$F_2$: $[1,-6,-10]$, $[1,0,1]$, $F_6$: $[1,-2,6]$, $F_8$: $[1,0,2]$, $[1,-10,-13]$}\\ 
\hline
$40$ & \makecell{$F_2$: $[1,14,-5\beta - 8]$, $F_3$: $[1,0,1]$, $[1,-6,9]$, $F_5$: $[1,152,-15\beta + 15]$,\\ $F_8$: $[1,-19,-16]$, $F_{10}$: $[1,127,-13\beta]$}\\ 
\hline
\multicolumn{2}{|c|}{principal $g$}\\
\hline
$29$ & \makecell{$F_1$: $[1,0,1]$, $F_4$: $[1,0,2]$, $F_5$: $[1,-4,11]$, $F_6$: $[1,-2,8]$, $F_7$: $[1,-1,6]$, $F_9$: $[1,0,3]$}\\ 
\hline
$37$ & \makecell{$F_1$: $[1,0,1]$, $F_3$: $[1,-6,15]$, $F_4$: $[1,0,2]$, $F_7$: $[1,-2,9]$, $F_9$: $[1,0,3]$, $F_{10}$: $[1,-3,11]$}\\ 
\hline
$40$ & \makecell{$F_1$: $[1,0,1]$, $[1,-2,9]$, $F_4$: $[1,0,2]$, $F_6$: $[1,50,-15\beta + 16]$,\\ $F_9$: $[1,-1,7]$, $[1,0,3]$, $F_{10}$: $[1,-2,\beta + 10]$}\\ 
\hline
$41$ & \makecell{$F_1$: $[1,0,1]$, $F_2$: $[1,-7,17]$, $F_4$: $[1,0,2]$, $F_5$: $[1,-4,13]$,\\ $F_8$: $[1,-1,7]$, $F_9$: $[1,0,3]$, $F_{10}$: $[1,-6,16]$}\\ 
\hline
$44$ & \makecell{$F_1$: $[1,0,1]$, $[1,-2,\beta + 10]$, $F_4$: $[1,-9,-20]$,\\ $F_5$: $[1,-1,7]$, $[1,0,\beta + 4]$, $F_9$: $[1,-1,\beta + 8]$, $[1,0,3]$}\\ 
\hline
$56$ & \makecell{$F_1$: $[1,0,1]$, $[1,-3,13]$, $F_2$: $[1,2,-7\beta + 24]$,\\ $F_4$: $[1,0,2]$, $F_8$: $[1,-1,8]$, $F_9$: $[1,-2,11]$, $[1,0,3]$}\\ 
\hline
$57$ & \makecell{$F_1$: $[1,0,1]$, $[1,-7,20]$, $F_4$: $[1,0,2]$, $[1,-5,17]$, $F_6$: $[1,-10,24]$,\\ $F_7$: $[1,-1,8]$, $[1,-2,11]$, $F_9$: $[2,0,3]$, $[1,0,3]$}\\ 
\hline
$69$ & \makecell{$F_1$: $[1,0,1]$, $[1,-7,22]$, $F_3$: $[1,-13,30]$, $F_4$: $[1,0,2]$, $[1,-9,25]$,\\ $F_6$: $[1,-2,-12]$, $F_9$: $[2,0,3]$, $[1,0,3]$}\\ 
\hline
$105$ & \makecell{$F_1$: $[1,-8,29]$, $[1,-16,41]$, $[1,0,1]$, $[1,-11,34]$,\\ $F_4$: $[1,0,2]$, $[1,-21,47]$, $[1,-5,23]$, $[1,-13,37]$,\\ $F_9$: $[1,-3,-18]$, $[2,0,3]$, $[1,0,3]$, $[2,-36,87]$}\\ 
\hline
\end{longtable}
\normalsize

\pagebreak
\printbibliography

\end{document}